\newcommand{\gen}[1]{\ensuremath{\langle{#1}\rangle}}
\newcommand{\var}{\ensuremath{\textnormal{var}}}
\newcommand{\black}{\color{black}}
\newtheorem{theorem}{Theorem}[section]
\newtheorem{example}[theorem]{Example}
\newtheorem{lemma}[theorem]{Lemma}
\newtheorem{corollary}[theorem]{Corollary}
\newtheorem{remark}[theorem]{Remark}
\newtheorem{proposition}[theorem]{Proposition}
\newcolumntype{C}[1]{>{\centering\let\newline\\\arraybackslash\hspace{0pt}}m{#1}}
\numberwithin{equation}{section}
\begin{document}
	\title{On the multiplicities of  the central cocharacter of algebras with polynomial identities}
	
	\author{
Wesley Quaresma Cota$^{a,1}$ and Thais Silva do Nascimento$^{b,*}$ 
}

\thanks{{\it E-mail addresses:} quaresmawesley@gmail.com (Cota) and  thais.nascimento@ufmt.br (do Nascimento).}

\thanks{\footnotesize $^{1}$ Partially supported by FAPESP, grant no. 2025/05699-0.}

\thanks{\footnotesize $^{*}$ Corresponding author.}

\subjclass[2020]{Primary 16R10, 16W50, Secondary 20C30}

\keywords{Polynomial identities, central polynomials, colength sequence, central cocharacter}

\dedicatory{$^{a}$ IME, USP, Rua do Matão 1010, 05508-090, São Paulo, Brazil \\
$^b$IME, UFMT, Avenida Fernando Corrêa da Costa 2367, 78060-900, Cuiabá, Brazil}
	
	\begin{abstract}  

For an associative algebra $A$ over a field of characteristic zero, let $P_n(A)$ and $P_n^z(A)$ denote the spaces of multilinear polynomials of degree $n$ modulo the polynomial identities and the central polynomials of $A$, respectively. We also write $\Delta_n(A)$ for the space of multilinear central polynomials of degree $n$ modulo the polynomial identities of $A$. The corresponding sequences of colengths, central colengths and proper central colengths measure the number of irreducible components in the $S_n$-module decompositions of $P_n(A)$, $P_n^z(A)$ and $\Delta_n(A)$, respectively. In this paper, we investigate several examples of PI-algebras and explicitly describe their cocharacter, central cocharacter and proper central cocharacter sequences. As a consequence, we obtain a complete classification, up to PI-equivalence, of all algebras whose sequences of colengths and central colengths are bounded by a constant.
 \black
	\end{abstract}
	
	\maketitle
	
	\section{Introduction}

Determining the complete set of polynomial identities satisfied by a given algebra is a notoriously difficult problem. For instance, the description of the $\textnormal{T}$-ideal of the algebra of $n \times n$ matrices $M_n(F)$ is known only for $n \leq 2$. In this context, the works of Regev and Kemer have significantly advanced the theory by introducing an approach based on numerical invariants associated with the algebra, such as the codimension and colength sequences. These invariants, which constitute the central focus of this paper, have become fundamental tools in the study of polynomial identities, as they provide valuable information about their asymptotic behavior.  

Let $F$ be a field of characteristic zero and let $A$ be an associative algebra over $F$. Denote by $P_n$ the space of multilinear polynomials of degree $n$ and by $P_n(A)$ the quotient space $$P_n(A)=\frac{P_n}{P_n\cap \textnormal{Id}(A)},$$ where $\textnormal{Id}(A)$ is the $\textnormal{T}$-ideal of polynomial identities of $A$. The number $c_n(A) = \dim_F P_n(A)$ is called the $n$-th codimension of $A$. Regev \cite{RG} proved that an algebra $A$ satisfies a non-zero polynomial identity (i.e., is a PI-algebra) if and only if the sequence $c_n(A), \, n \geq 1$, is exponentially bounded. Furthermore, Kemer \cite{Kem} showed that this sequence is either polynomially bounded or grows exponentially.  

Alongside polynomial identities, a related object of study is the set of {central polynomials} of an algebra $A$. These are polynomials whose evaluations always belong to the center of $A$. If a central polynomial takes a non-zero value in the center of $A$, it is called a {proper central polynomial}. Associated with the $\textnormal{T}$-space $\textnormal{Id}^z(A)$ of central polynomials of $A$, Regev introduced two new sequences of numerical invariants: for $n \geq 1$, the $n$-th {central codimensions} and {proper central codimensions} are defined, respectively,  by $$c_n^z(A)=\dim_F P_n^z(A)\mbox{ and }\delta_n(A)=\dim_F \Delta_n(A),$$
$$\mbox{ where } P_n^z(A) =  \frac{P_n}{P_n \cap \textnormal{Id}^z(A)}
\quad \text{and} \quad
\Delta_n(A) = \frac{P_n \cap \textnormal{Id}^z(A)}{P_n \cap \textnormal{Id}(A)}.
$$

By definition, one has the decomposition
\begin{equation} \label{relacaodascodimensoes}
c_n(A) = c_n^z(A) + \delta_n(A)    
\end{equation}
and hence, if $A$ is a PI-algebra, both the central and proper central codimension sequences are also exponentially bounded. 

It is worth emphasizing that these sequences have been computed explicitly only for a few examples of PI-algebras, and their general behavior is far from being completely understood. Nonetheless, some progress has been made by considering the exponential growth of these sequences. In this direction, Giambruno and Zaicev \cite{AntMik,GiZai} proved that the limits  
\[
\exp(A)=\lim_{n\rightarrow\infty}\sqrt[n]{c_n(A)},\qquad 
\exp^{z}(A)=\lim_{n\rightarrow\infty}\sqrt[n]{c_n^{z}(A)}\qquad 
\text{and}\qquad 
\exp^{\delta}(A)=\lim_{n\rightarrow\infty}\sqrt[n]{\delta_n(A)},
\]
called the PI-exponent, the central exponent and the proper central exponent of a PI-algebra, respectively, always exist and are non-negative integers. Therefore, they provided a positive solution to Amitsur’s conjecture in this setting.

It is well known that the spaces $P_n(A)$, $P_n^z(A)$ and $\Delta_n(A)$ carry a natural action of the symmetric group $S_n$ by permuting their variables. This induces a decomposition of these spaces as $S_n$-modules into direct sums of irreducible modules indexed by partitions $\lambda \vdash n$. Thus, one can decompose their corresponding characters, respectively, as
\[
\chi_n(A) = \sum_{\lambda \vdash n} m_\lambda \chi_\lambda, \quad
\chi_n^z(A) = \sum_{\lambda \vdash n} m_\lambda^z \chi_\lambda \quad \mbox{ and }\quad 
\chi_n^\delta(A) = \sum_{\lambda \vdash n} m_\lambda^\delta \chi_\lambda,
\]
where $\chi_\lambda$ denotes the irreducible character of $S_n$ corresponding to $\lambda$, and $m_\lambda$, $m_\lambda^z$, $m_\lambda^\delta$ are the corresponding multiplicities. The characters $\chi_n(A)$, $\chi_n^z(A)$ and $\chi_n^\delta(A)$ are called the $n$-th {cocharacter}, {central cocharacter} and {proper central cocharacter} of $A$. Clearly, we have $$\chi_n(A) = \chi_n^z(A) + \chi_n^\delta(A)\mbox{ and }m_\lambda =m_\lambda^z+ m_\lambda^\delta .$$

Determining the complete decomposition of these cocharacters is a central and challenging problem in PI-theory. Moreover, recent progress has been achieved in the study of the decomposition of central and proper central cocharacters for the minimal varieties contained in $\textnormal{var}(UT_2)$ and $\textnormal{var}(\mathcal{G})$ (see, for instance, \cite{Costa}).

The sums
\[
l_n(A) = \sum_{\lambda \vdash n} m_\lambda, \quad
l_n^z(A) = \sum_{\lambda \vdash n} m_\lambda^z \quad \mbox{ and }\quad 
l_n^\delta(A) = \sum_{\lambda \vdash n} m_\lambda^\delta
\]
are defined as the $n$-th {colength}, {central colength} and {proper central colength} of $A$, respectively. In \cite{Mishchenko}, Mishchenko, Regev and Zaicev characterized the varieties of polynomial growth as precisely those for which the sequence $l_n(A), \, n \geq 1$, is bounded by a constant. This motivated a new approach to classifying varieties based on their colength sequences, as developed in \cite{GLa, Daniela}.


This paper aims to extend these results by classifying varieties with respect to both their colength and central colength sequences. We emphasize that the classification of varieties with colength bounded by $4$ is entirely contained within subvarieties of the variety generated by the algebra $UT_2$ of upper triangular matrices of size $2$, whose structure is well known (see \cite[Theorem 5.4]{Danielasub}). To establish the result we aim for, it is necessary to study varieties outside $\textnormal{var}(UT_2)$, thereby revealing an even greater level of complexity when dealing with higher colengths.  

It is worth mentioning that colength sequences have also been studied for algebras with additional structures, see \cite{Cota2, Cota, MN, NV, Ana} for further details. 



\section{Polynomial identities and central polynomials}

Let \( F\langle X \rangle \) be the free associative algebra generated by a countable set \( X=\{x_1, x_2, \ldots \} \) of variables over a field \( F \) of characteristic zero. 

We say that \( A \) is a PI-algebra if it satisfies a nontrivial polynomial identity, that is, a nonzero polynomial $f$ that vanishes under all evaluations in \( A \). In this case, we denote $f\equiv 0$ on $A$. Associated to the algebra \( A \), we define
\[
\textnormal{Id}(A) = \{f \in F\langle X \rangle \mid f \equiv 0 \text{ on } A\},\quad \textnormal{Id}^z(A) = \{f(x_1 ,\ldots, x_n) \in F\langle X \rangle \mid f(a_1, \ldots, a_n)\in Z(A)\}
\] $$ \mbox{ and } \textnormal{Id}^z(A)- \textnormal{Id}(A)$$
the sets of all polynomial identities, central polynomials and proper central polynomials of \( A \), respectively, where $Z(A)$ denotes the center of $A$. These sets form $\textnormal{T}$-spaces of \( F\langle X \rangle \), that is, subspaces invariant under all endomorphisms of the free algebra. Moreover, $\textnormal{Id}(A)$ is an ideal of the free algebra, and hence is referred to as the T-ideal of $A$.

Since we are interested in an investigation from the point of view of polynomial identities, we define the variety generated by \( A \), denoted by \( \mathcal{V} = \textnormal{var}(A) \), as the class of all algebras satisfying all the identities of \( A \). If two algebras \( A \) and \( B \) have the same $\textnormal{T}$-ideal, then we say that they are \(\textnormal{T}\)-equivalent and write \( A \sim_T B \).

In characteristic zero, it is well known that \( \textnormal{Id}(A) \), \( \textnormal{Id}^z(A) \) and $\textnormal{Id}^z(A)- \textnormal{Id}(A)$ are completely determined by their multilinear polynomials. For each \( n \geq 1 \), we define \( P_n \) as the vector space of multilinear polynomials of degree \( n \) in the variables \( x_1, \ldots, x_n \), that is,
\[
P_n = \mathrm{span}_F\{x_{\sigma(1)} \cdots x_{\sigma(n)} \mid \sigma \in S_n\}.
\]
To study the asymptotic behavior of the polynomial identities, central polynomials and proper central polynomials of \( A \), Regev~\cite{RG} introduced three sequences: the sequence of codimensions \( \{c_n(A)\}_{n \geq 1} \), central codimensions \( \{c_n^z(A)\}_{n \geq 1} \) and proper central codimensions \( \{\delta_n(A)\}_{n \geq 1} \), where their $n$-th terms are defined, respectively, by $$c_n(A) = \dim_F P_n(A),\, c_n^z(A) = \dim_F P_n^z(A)\mbox{ and }\delta_n (A) = \dim_F \Delta_n(A),$$
$$ \mbox{ where }\, P_n(A)=  \frac{P_n}{P_n \cap \textnormal{Id}(A)}, \quad P_n^z(A)=  \frac{P_n}{P_n \cap \textnormal{Id}^z(A)}\quad 
\mbox{ and }\quad  \Delta_n(A)=  \frac{P_n\cap \textnormal{Id}^z(A)}{P_n \cap \textnormal{Id}(A)}.$$

In what follows, the superscript ${T}$ indicates that the corresponding set generates the subspace as a $\textnormal{T}$-space, whereas the subscript $T$ denotes that they generate it as a $\textnormal{T}$-ideal.

\begin{example} \cite{Malvec, Regev}
Let \( \mathcal{G}=
\langle 1,e_1,e_2,\ldots \mid e_ie_j=-e_je_i\rangle  \) be the infinite-dimensional Grassmann algebra, and let \( UT_2 \) denote the algebra of \( 2 \times 2 \) upper triangular matrices over \( F \). Then,
\[
\textnormal{Id}(UT_2) =\textnormal{Id}^z(UT_2)= \langle [x_1, x_2][x_3, x_4] \rangle_T, \quad  \textnormal{Id}(\mathcal{G}) = \langle [x_1, x_3, x_4] \rangle_T \quad \text{and} \quad \textnormal{Id}^z(\mathcal{G})= \langle [x_1,x_2], \textnormal{Id}(\mathcal{G})\rangle^T.
\]
Moreover, $c_n(\mathcal{G}) = 2^{n-1}$, $c_n^z(\mathcal{G})= \delta_n (\mathcal{G})= 2^{n-2}$, $c_n(UT_2)=c_n^z(UT_2) = 2^{n-1}(n-2) + 2$ and $\delta_n(UT_2)=0$.
\end{example}

 In \cite{RG}, Regev proved that if $A$ is a PI-algebra over a field of characteristic zero, then there exist constants $\alpha, \beta \geq 0$ such that  
\[
c_n(A) \leq \beta\, \alpha^{\,n} ,\quad \text{for all } n \geq 1.
\]
Hence, by (\ref{relacaodascodimensoes}), both sequences $c_n^z(A)$ and $\delta_n(A)$, $n \geq 1$, are also exponentially bounded.

This result initiated a systematic study of varieties of polynomial growth.  Recall that an algebra $A$ is said to have {polynomial codimension growth} if there exist constants $\alpha, t \geq 0$ such that  
\[
c_n(A) \leq \alpha n^{t}, \quad \text{for all } n \geq 1.
\]

Below, we gather some well-known characterizations of varieties of polynomial growth.

\begin{theorem}\cite{ GiamZai, Kem} \label{teokemer}
    Let $A$ be an algebra over a field $F$ of characteristic zero. Then $A$ has polynomial growth if and only if one of the following cases occurs
\begin{enumerate}
    \item[1)] $\mathcal{G},UT_2\notin \textnormal{var}(A).$

    \item[2)] $A\sim_T B_1\oplus \cdots \oplus B_m$, where each $B_i$ is a finite-dimensional algebra over $F$ such that $\dim B_i/J(B_i) \leq 1$ where $J(B_i)$ denotes its Jacobson radical. 
\end{enumerate}

\end{theorem}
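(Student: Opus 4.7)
The plan is to prove the circle of implications: polynomial growth $\Rightarrow (1)\Rightarrow (2)\Rightarrow$ polynomial growth.

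For \textbf{polynomial growth $\Rightarrow (1)$}, I would use the monotonicity of codimensions under variety inclusion: whenever $B\in\textnormal{var}(A)$ one has $\textnormal{Id}(A)\subseteq\textnormal{Id}(B)$, and hence $c_n(B)\le c_n(A)$. Combined with the exponential codimensions $c_n(\mathcal{G})=2^{n-1}$ and $c_n(UT_2)=2^{n-1}(n-2)+2$ recorded in the Example, polynomial growth of $A$ immediately excludes both $\mathcal{G}$ and $UT_2$ from $\textnormal{var}(A)$.

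For \textbf{$(2)\Rightarrow$ polynomial growth}, I would first handle a single summand. If $\dim(B_i/J(B_i))\le 1$, then $B_i/J(B_i)$ is either $0$ or $F$, so by Wedderburn–Malcev $B_i$ is either nilpotent or of the form $F\cdot 1 + N_i$ with $N_i=J(B_i)$ nilpotent; in both cases a standard multilinearization argument bounded by the nilpotency index and $\dim B_i$ yields $c_n(B_i)\le \alpha_i n^{t_i}$. I would then invoke the (non-trivial) fact, essentially due to Giambruno–Zaicev, that polynomial growth is preserved under finite direct sums of PI-algebras; this is proved by partitioning the variable set among the summands and controlling the resulting convolution of codimensions via the $S_n$-representation theory.

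The hardest implication is \textbf{$(1)\Rightarrow (2)$}. Here I would invoke Kemer's classification theorem: in characteristic zero, every T-ideal is the T-ideal of the Grassmann envelope of a finite-dimensional superalgebra. The hypothesis $\mathcal{G}\notin\textnormal{var}(A)$ rules out a non-trivial odd component of the superalgebra, so $\textnormal{var}(A)$ is generated by an ordinary finite-dimensional algebra $B$. Next, the hypothesis $UT_2\notin\textnormal{var}(A)$ simultaneously prevents any simple component of $B/J(B)$ from having dimension $\ge 2$ (otherwise $UT_2$ embeds via a standard construction) and forbids distinct simple components of $B/J(B)$ from being "bridged" by a non-trivial piece of $J(B)$ (any such bridge also produces $UT_2$ inside the variety). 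Unwinding these structural constraints via Wedderburn–Malcev yields the required T-equivalent decomposition of $A$ into summands $B_i$ with $\dim(B_i/J(B_i))\le 1$.

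The main obstacle is precisely this final implication $(1)\Rightarrow (2)$: it packages Kemer's deep representability theory with the careful structural analysis needed to translate the exclusion of $UT_2$ into a genuine direct-sum decomposition of the Wedderburn–Malcev components. The other two implications are comparatively routine consequences of monotonicity and explicit codimension estimates.
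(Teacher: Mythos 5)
The paper does not prove this theorem; it is cited from \cite{GiamZai, Kem}, so there is no internal proof to compare against. Judged on its own, your circle of implications is the right skeleton, and the leg (polynomial growth $\Rightarrow$ 1)) is correct as you state it.

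For 2) $\Rightarrow$ polynomial growth, the direct-sum step is needlessly heavy. From $\textnormal{Id}(B_1\oplus\cdots\oplus B_m)=\bigcap_i\textnormal{Id}(B_i)$ one gets an injection $P_n(B_1\oplus\cdots\oplus B_m)\hookrightarrow\bigoplus_i P_n(B_i)$, hence $c_n\bigl(\bigoplus_i B_i\bigr)\le\sum_i c_n(B_i)$; no variable-partitioning or $S_n$-theoretic ``convolution'' is involved. The genuinely nontrivial ingredient for this leg is that a single algebra $F\cdot 1 + J$ with $J$ finite-dimensional nilpotent has polynomially bounded codimensions, which you correctly gesture at via the nilpotency index.

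The real gap is in 1) $\Rightarrow$ 2). Your claim that ``$\mathcal{G}\notin\textnormal{var}(A)$ rules out a non-trivial odd component of the [Kemer] superalgebra'' is false. Take the two-dimensional superalgebra $B=F\cdot 1\oplus F\cdot e$ with $e$ odd and $e^2=0$; its Grassmann envelope $E(B)=1\otimes\mathcal{G}_0\oplus e\otimes\mathcal{G}_1$ is commutative (because $\mathcal{G}_0$ is central in $\mathcal{G}$ and the odd part of $E(B)$ squares to zero), so $\mathcal{G}\notin\textnormal{var}(E(B))$ even though the odd component of $B$ is nonzero. Thus you cannot first use $\mathcal{G}\notin\textnormal{var}(A)$ to discard the super-structure and then use $UT_2\notin\textnormal{var}(A)$ to kill the radical bridges. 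In the actual proof from \cite{GiamZai}, after invoking Kemer to write $A\sim_T E(B)$ for a finite-dimensional superalgebra $B$ with graded Wedderburn--Malcev decomposition $B=(C_1\oplus\cdots\oplus C_q)+J$, the two exclusions are applied \emph{jointly} to the graded simple summands $C_i$ and to the radical linking: a component $C_i$ isomorphic to $M_n(F)$ with $n\ge 2$, to $M_{k,l}(F)$ with $l\ge 1$, or to $M_n(F\oplus cF)$ would force $UT_2$ or $\mathcal{G}$ into $\textnormal{var}(E(B))$, so each $C_i$ must be $F$ with trivial grading; and any nonzero $C_iJC_j$ with $i\ne j$ likewise produces $UT_2$. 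Only this combined analysis, followed by Wedderburn--Malcev, yields the stated direct-sum decomposition with $\dim B_i/J(B_i)\le 1$. Your outline reaches the right destination but the intermediate reduction you state is not a true theorem, so the argument as written does not go through.
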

As a consequence, the varieties generated by $\mathcal{G}$ and $UT_2$ are the only varieties of almost polynomial growth, that is, they have exponential codimension growth, while every proper subvariety has polynomial growth. 

It is worth noting that, by \cite[Theorem~3]{AntMik}, an algebra $A$ has polynomial growth of the codimension sequence $\{c_n(A)\}_{n \geq 1}$ if and only if the central codimension sequence $\{c_n^z(A)\}_{n \geq 1}$ also has polynomial growth. Consequently, the varieties $\mathrm{var}(UT_2)$ and $\mathrm{var}(\mathcal{G})$ are precisely those with almost polynomial growth of the sequence of central codimensions. More recently, in \cite{GiamLaMilies}, the authors classified the varieties of almost polynomial growth with respect to the proper central codimensions. Consequently, the sequences $c_n(A)$, $c_n^z(A)$ and $\delta_n(A)$, $n \geq 1$, are either polynomially bounded or grow exponentially, with no intermediate growth possible.

Motivated by the previous results, La Mattina \cite{Danielasub} classified the subvarieties of the varieties of almost polynomial growth. In order to state this result, let us define the following algebras. 

For $m\geq 2$, let $UT_m$ be the algebra of $m\times m$ upper triangular matrices and consider the elements $E = \sum\limits_{i = 1}^{m-1} e_{i,i+1}$ and $I_m=\sum\limits_{i = 1}^{m-1}e_{ii}$ the identity matrix of order $m$, where $e_{ij}$ denotes the elementary matrix with $1$ in the $(i,j)$-entry and zeros elsewhere. Define the following subalgebras of $UT_m$
\begin{align*}
A_{m,1} \; &=\; \textnormal{span}_F\{e_{11}, E, \ldots , E^{m-2}; e_{12}, e_{13}, \ldots , e_{1m}\}, \\[4pt]
A_{m,1}^* \; &=\; \textnormal{span}_F\{e_{mm}, E, \ldots , E^{m-2}; e_{1m}, e_{2m}, \ldots , e_{m-1\, m}\}, \\[4pt]
N_m \; \; \; &=\; \textnormal{span}_F\{I_m, E, \ldots , E^{m-2}; e_{12}, e_{13}, \ldots , e_{1m}\}.
\end{align*}

Moreover, for $k\geq 2$, we denote by $\mathcal{G}_{2k}=\langle 1,e_1, \ldots, e_{2k}\mid e_ie_j=-e_je_i \rangle$ as the subalgebra of the infinite-dimensional Grassmann algebra generated by $1,e_1, \ldots, e_{2k}$. It is straightforward to verify that $\mathcal{G}_2\sim_T N_3$. 

\begin{theorem} \label{subvarieties} \cite[Theorems 5.2 and 5.4]{Danielasub}  Let $A$ be an algebra over a field of characteristic zero.
\begin{enumerate}
    \item[1)] If $A\in \textnormal{var}(\mathcal{G})$ then $A$ is PI-equivalent to a finite direct sum of algebras in the set  $$\{N, C, \mathcal{G}_{2k},\mathcal{G}\mid k\geq 1\}.$$ 

    \item[2)] If $A\in \textnormal{var}(UT_2)$ then $A$ is PI-equivalent to a finite direct sum of algebras in the set $$\{N, C, N_{r},A_{m,1}, A_{m,1}^*,UT_2\mid r\geq 3,m\geq 2\}$$
\end{enumerate} where $N$ denotes a nilpotent algebra and $C$ a commutative non-nilpotent algebra.    
\end{theorem}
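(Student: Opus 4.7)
The plan is to combine the structural characterization of polynomial growth (Theorem~\ref{teokemer}) with a Wedderburn--Malcev style analysis of the finite-dimensional summands, and then to refine the list of possible summands inside each of the two varieties.

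First, in both parts, the case in which $A$ generates the full variety is immediate: if $\mathcal{G}\in\textnormal{var}(A)$ (respectively $UT_2\in\textnormal{var}(A)$), then combined with $A\in\textnormal{var}(\mathcal{G})$ (respectively $\textnormal{var}(UT_2)$) we obtain $A\sim_T\mathcal{G}$ (respectively $A\sim_T UT_2$). Otherwise, using that $\mathcal{G}\notin\textnormal{var}(UT_2)$ and $UT_2\notin\textnormal{var}(\mathcal{G})$ (an elementary check on the identities $[x_1,x_2][x_3,x_4]$ and $[x_1,x_3,x_4]$), neither $\mathcal{G}$ nor $UT_2$ lies in $\textnormal{var}(A)$. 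Theorem~\ref{teokemer} then provides a decomposition $A\sim_T B_1\oplus\cdots\oplus B_m$ with each $B_i$ finite-dimensional and $\dim B_i/J(B_i)\leq 1$. It therefore suffices to classify such $B_i$, up to T-equivalence, inside each variety, since a direct sum of PI-equivalent algebras is PI-equivalent to the direct sum of representatives.

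For part (1), each summand $B_i$ satisfies the Grassmann identity $[x_1,x_3,x_4]\equiv 0$, so all commutators of $B_i$ are central. The case $J(B_i)=B_i$ gives a nilpotent algebra $N$, and the case in which $B_i$ is commutative and non-nilpotent gives an algebra T-equivalent to a fixed commutative non-nilpotent representative $C$. In the remaining case, $B_i/J(B_i)=F$ and $B_i$ is non-commutative. After lifting the unit of the semisimple quotient and choosing generators $y_1,\ldots,y_{2k}$ for the non-commutative part of $J(B_i)$ modulo its center, the centrality of commutators combined with the nilpotency of the radical forces the elements $y_iy_j+y_jy_i$ to lie in the center. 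A comparison of multilinear identities in increasing degree then shows that $B_i\sim_T \mathcal{G}_{2k}$, with central nilpotent remainders absorbed into a direct summand of type $N$. This yields the full list $\{N,C,\mathcal{G}_{2k},\mathcal{G}\}$. For part (2), each $B_i$ satisfies $[x_1,x_2][x_3,x_4]\equiv 0$; writing $B_i=Fe\oplus J(B_i)$ and taking the Peirce decomposition of $J(B_i)$ relative to the idempotent $e$, the radical splits into the pieces $eJe$, $eJ(1-e)$, $(1-e)Je$, $(1-e)J(1-e)$. The identity $[x,y][z,w]\equiv 0$, combined with radical nilpotency, severely restricts the multiplication among these pieces; a case-by-case analysis according to which Peirce components are non-zero singles out the three non-commutative models $A_{m,1}$ (radical acts as a left module over $Fe$), $A_{m,1}^*$ (dual configuration), and $N_r$ (two-sided), with the degenerate cases giving $N$ and $C$.

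The main obstacle is this final matching step in each part. The reduction to finite-dimensional algebras via Theorem~\ref{teokemer} is standard, but identifying the bimodule structure of $J(B_i)$ with one of the finitely many model algebras $\mathcal{G}_{2k}$, $A_{m,1}$, $A_{m,1}^*$, $N_r$ up to T-equivalence requires verifying both directions: that every admissible action can be reduced, without enlarging the T-ideal, to one of the listed models, and that each model indeed lies in the prescribed variety. This is carried out by an explicit comparison of their multilinear identities in low degree and an inductive argument using the exponential-to-polynomial dichotomy for central codimensions.
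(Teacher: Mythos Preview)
This theorem is not proved in the paper; it is quoted from La Mattina \cite{Danielasub} and used as a black box, so there is no argument in the paper to compare yours against. I can only assess your sketch on its own terms.

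Your reduction via Theorem~\ref{teokemer} to finite-dimensional summands $B_i$ with $\dim B_i/J(B_i)\leq 1$ is correct and is indeed how La Mattina's proof begins. The genuine gap is in what you yourself call the ``matching step''. In part~(2) your case analysis presumes that a single summand $B_i=Fe+J(B_i)$ will be $T$-equivalent to \emph{one} of $A_{m,1}$, $A_{m,1}^*$, $N_r$, according to which Peirce component carries the radical. That is not true: several of the pieces $J_{10},J_{01},J_{11},J_{00}$ can be simultaneously nonzero, and then $B_i$ is only $T$-equivalent to a \emph{direct sum} of model algebras (for instance $A_{m,1}\oplus A_{m',1}^*\oplus N$ when $J_{10}\neq 0\neq J_{01}$ with trivial cross-products). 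Establishing such a $T$-equivalence means proving $\textnormal{Id}(B_i)=\bigcap_j\textnormal{Id}(M_j)$ for the appropriate list of models $M_j$, and this is precisely the technical core of \cite{Danielasub}; it is not accomplished by ``a comparison of multilinear identities in low degree''. In part~(1) the situation is simpler --- the identity $[x,y,z]\equiv 0$ forces $J_{10}=J_{01}=\{0\}$, so $B_i=(F+J_{11})\oplus J_{00}$ --- but you still have to pin down the correct $k$ as the largest length of a nonvanishing product of commutators and then invoke the explicit generators of $\textnormal{Id}(\mathcal{G}_{2k})$ from Lemma~\ref{G2k}; the remark about $y_iy_j+y_jy_i$ lying in the center does not do this. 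Finally, the closing appeal to an ``exponential-to-polynomial dichotomy for central codimensions'' is misplaced: central polynomials play no role whatsoever in this classification.
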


It is well known that the symmetric group \( S_n \) acts naturally on the space \( P_n \) by permuting the variables \( x_1, \ldots, x_n \). Since $P_n\cap \textnormal{Id}(A)$ and $P_n\cap \textnormal{Id}^z(A)$ are invariant under this action, the quotient spaces \( P_n(A) \), $P_n^z(A)$ and $\Delta_n(A)$ inherit a natural structure of \( S_n \)-modules. We denote by \( \chi_n(A) \), $\chi_n^z(A)$ and $\chi_n^\delta(A)$ the characters associated to the $S_n$-modules \( P_n(A) \), \( P_n^z(A) \) and \( \Delta_n(A) \), respectively, which are called the \( n \)-th cocharacter, central cocharacter and proper central cocharacter of \( A \).

Since we are working over a field \( F \) of characteristic zero, the representation theory of symmetric groups implies that \( P_n(A) \), \( P_n^z(A) \) and \( \Delta_n(A) \) admit a decomposition into irreducible \( S_n \)-modules labeled by partitions \( \lambda \vdash n \). Therefore, we can decompose
\begin{equation} \label{character}
\chi_n(A) = \sum_{\lambda \vdash n} m_\lambda \chi_\lambda, \quad  \chi_n^z(A) = \sum_{\lambda \vdash n} m_\lambda^z \chi_\lambda \quad \mbox{ and }\quad \chi_n^\delta(A) = \sum_{\lambda \vdash n} m_\lambda ^\delta\chi_\lambda
\end{equation} where \( \chi_\lambda \) denotes the irreducible character corresponding to the partition \( \lambda \) and \( m_\lambda \), \( m_\lambda^z \) and \( m_\lambda^
\delta\) denote the corresponding multiplicity. Then, $$\chi_n(A)=\chi_n^z(A)+\chi_n^\delta(A)\quad \mbox{ and }\quad m_\lambda = m_{\lambda}^z+m_{\lambda}^\delta.$$

The \( n \)-th colength, central colength and proper central colength of \( A \) are defined, respectively, by
\[
l_n(A) = \sum_{\lambda \vdash n} m_\lambda, \quad l_n^z(A) = \sum_{\lambda \vdash n} m_\lambda^z \quad \mbox{ and }\quad l_n^\delta(A) = \sum_{\lambda \vdash n} m_\lambda^\delta.
\]

There is a strong connection between the codimension sequence and the colength sequence. In fact, Mishchenko, Regev and Zaicev~\cite{Mishchenko} proved that a variety $\mathcal{V}$ has polynomial codimension growth if and only if there exists a constant $k \geq 0$ such that  
\[
l_n(A) \leq k,\quad \text{for all } n \geq 1.
\]

This characterization of varieties with polynomial growth has motivated further investigations into the classification of varieties according to the behavior of the sequence $\{l_n(A)\}_{n \geq 1}$. In this direction, important contributions were made by Giambruno and La Mattina~\cite{GLa, Daniela}, who classified the varieties whose colength sequences are bounded by $4$.

\section{Constructing algebras with small colength}

In this section, we introduce several algebras that play a fundamental role in the results presented later in this paper. For the algebras under consideration, we describe some $\textnormal{T}$-ideals, codimensions, cocharacters and compute the colengths sequences. These computations will be used in the classification results developed in the subsequent sections.

Before we introduce our extensive list of algebras, we present a useful tool to compute the multiplicities $m_{\lambda}$ in the decomposition of $\chi_n(A)$. For this purpose, we need to resort to the representation theory of the general linear group $GL_m$. 

Let $F_m := F\langle x_{1}, \ldots ,x_{m}\rangle$
be the free associative algebra of rank $m$ and define the subspace $U_m:= \mathrm{span}_F\{x_{1}, \ldots, x_{m}\}$. Observe that the general linear group $GL(U_m) \cong GL_m$ acts naturally on $U_m$, and this action extends diagonally to an action on the subspace $F_{m}^n$ of $F_m$ consisting of all polynomials of degree $n$.

Since $F_{m}^n \cap \textnormal{Id}(A)$ is invariant under this action, the quotient space
\[
F_{m}^n(A) := \frac{F_{m}^n}{F_{m}^n \cap \textnormal{Id}(A)}
\]
inherits a natural structure of a $GL_m$-module. The character of this module is denoted by $\psi_n ^m(A)$.

Over a field $F$ of characteristic zero, the irreducible polynomial representations of $GL_m$ are indexed by partitions $\lambda \vdash n$ with height $h(\lambda)$ of the corresponding diagram is at most $ m$. Hence, we can decompose the character $\psi_n ^m(A)$ as follows:
\begin{equation}\label{eq:char_decomp}
\psi_n ^m(A) = \sum_{\substack{\lambda \vdash n \\ h(\lambda)\leq m}} \widetilde{m}_\lambda \psi_\lambda,
\end{equation}
where $\psi_\lambda$ is the irreducible character corresponding to the partition $\lambda$ and $\widetilde{m}_\lambda$ denotes its multiplicity.

\begin{proposition} \cite{Berele, Drensky2}
Let $\chi_n (A)$ and $\psi_n^m (A)$ denote the $n$-th cocharacter and the $GL_m$-character of $A$, respectively, as defined in equations \eqref{character} and \eqref{eq:char_decomp}. Then, $m_\lambda = \widetilde{m}_\lambda$ for every partition $\lambda \vdash n$ with $h(\lambda) \leq m$.
\end{proposition}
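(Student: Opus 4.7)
The plan is to identify both $m_\lambda$ and $\widetilde m_\lambda$ with the dimension of one common auxiliary space, obtained from $F_m^n(A)$ by applying a Young symmetrizer with an appropriate choice of substitution; this is the Berele--Drensky version of Schur--Weyl duality adapted to PI-theory.

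First, recall that in characteristic zero the decomposition $P_n(A)=\bigoplus_\mu m_\mu M_\mu$ into irreducible $S_n$-modules gives the standard identification $m_\lambda=\dim_F(e_{T_\lambda}P_n(A))$, where $T_\lambda$ is any fixed Young tableau of shape $\lambda$ and $e_{T_\lambda}\in FS_n$ is the associated Young symmetrizer. This follows since $e_{T_\lambda}$ is, up to a nonzero scalar, a primitive idempotent whose image in $M_\lambda$ is one-dimensional and which annihilates every $M_\mu$ with $\mu\neq\lambda$. On the $GL_m$ side, $\widetilde m_\lambda$ equals the dimension of the highest-weight subspace of $F_m^n(A)$ of weight $\lambda=(\lambda_1,\ldots,\lambda_m)$, that is, the space of polynomials of multidegree $(\lambda_1,\ldots,\lambda_m)$ annihilated by every raising operator $x_i\,\partial/\partial x_j$ for $i<j$.

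Next, fix the canonical tableau $T_\lambda$ filled by $1,\ldots,\lambda_1$ in row $1$, then $\lambda_1+1,\ldots,\lambda_1+\lambda_2$ in row $2$, and so on, and define the substitution map $\Phi:P_n\to F_m^n$ by $x_i\mapsto x_{r(i)}$, where $r(i)$ denotes the row of $T_\lambda$ containing $i$. Since $\textnormal{Id}(A)$ is a $\textnormal{T}$-ideal, $\Phi$ descends to $\overline\Phi:P_n(A)\to F_m^n(A)$. The central technical step is to show that $\overline\Phi$ restricts to a linear isomorphism from $e_{T_\lambda}P_n(A)$ onto the weight-$\lambda$ highest-weight subspace of $F_m^n(A)$: the row-symmetrization factor of $e_{T_\lambda}$ produces the required symmetries across variables that collapse onto the same $x_j$, while the signed column-symmetrization factor imposes the alternating/Vandermonde structure characterizing a highest-weight vector.

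The inverse map is given by classical polarization, which recovers from a multihomogeneous polynomial of multidegree $\lambda$ in $F_m^n$ a multilinear polynomial in $P_n$ via iterated partial linearization. The hard part will be verifying compatibility with $\textnormal{Id}(A)$ in this direction: while substitution automatically preserves $\textnormal{T}$-ideals, polarization does so only because, in characteristic zero, $\textnormal{Id}(A)$ is closed under the full multilinearization process and is determined by its multilinear consequences, and one must also check that the polarization of a highest-weight vector lies in the subspace $e_{T_\lambda}P_n(A)$ up to a nonzero rational coefficient. Combining both directions produces a well-defined vector-space isomorphism $e_{T_\lambda}P_n(A)\cong H_\lambda(F_m^n(A))$, and therefore $m_\lambda=\widetilde m_\lambda$ for every $\lambda\vdash n$ with $h(\lambda)\leq m$.
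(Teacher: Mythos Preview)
The paper does not prove this proposition; it is stated with a citation to \cite{Berele, Drensky2} and used as a black box. There is therefore nothing to compare your argument against in the paper itself.

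Your outline is essentially the standard Berele--Drensky argument, and the overall strategy is sound: identify $m_\lambda$ with $\dim e_{T_\lambda}P_n(A)$, identify $\widetilde m_\lambda$ with the dimension of the weight-$\lambda$ highest-weight space of $F_m^n(A)$, and link the two via substitution and polarization. One point deserves more care than you give it. You assert that $\overline\Phi$ carries $e_{T_\lambda}P_n(A)$ \emph{onto} the highest-weight subspace, but it is not immediate from row-symmetry and column-antisymmetry alone that the image is annihilated by every raising operator $x_i\,\partial/\partial x_j$; conversely, it is not automatic that every highest-weight vector arises this way. The cleanest way to close this is not the explicit inverse you sketch but rather Schur--Weyl duality applied to the $(FS_n,GL_m)$-bimodule $F_m^n\cong U_m^{\otimes n}$: since $F_m^n\cap\textnormal{Id}(A)$ is a sub-bimodule (closure under place permutation and under linear substitutions), the quotient decomposes as $\bigoplus_{h(\lambda)\le m} \mathrm{Hom}_{S_n}(M_\lambda,P_n(A))\otimes W_\lambda$, from which $m_\lambda=\widetilde m_\lambda$ is read off directly. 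Your substitution/polarization maps are then the concrete realization of this abstract isomorphism, and the ``hard part'' you flag dissolves once the bimodule structure is in place.
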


According to \cite[Theorem 12.4.12]{Drensky}, each irreducible $GL_m$-module is cyclic and it is generated by a highest weight vector, denoted $f_\lambda$, associated to a partition $\lambda \vdash n$. Moreover, $f_\lambda$ can be expressed as a linear combination of certain polynomials $f_{T_\lambda}$ constructed as follows (see \cite[Proposition 12.4.14]{Drensky}).

Given a partition $\lambda \vdash n$, the initial Young tableau of shape $\lambda$ is defined as the one filled with the integers $1$ to $n$ in increasing order from top to bottom, column by column. For a Young tableau $T_{\lambda}$ of shape $\lambda$, let $\sigma \in S_n$ be the unique permutation that transforms the initial multitableau into $T_\lambda$. Then the corresponding highest weight vector $f_{T_\lambda}$ is defined by
\[
f_{T_\lambda} = \left(\prod_{i=1}^{\lambda_1} St_{h_i(\lambda)}(x_1, \ldots, x_{h_i(\lambda)})\right)\sigma^{-1},
\]
where $St_t(x_1,\ldots,x_t)$ denotes the standard polynomial of degree $t$, $h_i(\lambda)$ is the height of the $i$-th column of the Young diagram $\lambda$, and the symmetric group $S_n$ acts on the right by place permutation.

As a consequence, we can now state a key result that provides a practical criterion for computing the multiplicities $\widetilde{m}_\lambda= {m}_\lambda$.

\begin{proposition}[\cite{Drensky}]
Let $\psi_n^m (A)$ be the $GL_m$-character of $A$ as in equation \eqref{eq:char_decomp}. Then $\widetilde{m}_\lambda \neq 0$ if and only if there exists a multitableau $T_\lambda$ such that $f_{T_\lambda} \notin \textnormal{Id} (A)$. Furthermore, $\widetilde{m}_\lambda$ is equal to the maximal number of linearly independent highest weight vectors $f_{T_\lambda}$ in $F_{m}^n(A)$.
\end{proposition}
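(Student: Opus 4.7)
The plan is to reduce the computation of $\widetilde{m}_\lambda$ to counting highest weight vectors, and then to identify an explicit spanning family of highest weight vectors. Over a field of characteristic zero, each irreducible polynomial $GL_m$-module $W_\lambda$ (with $h(\lambda)\leq m$) is cyclic and its subspace of highest weight vectors of weight $\lambda$ is one-dimensional. Consequently, from the decomposition \eqref{eq:char_decomp}, the multiplicity $\widetilde{m}_\lambda$ equals the dimension of the subspace $V_\lambda$ of highest weight vectors of weight $\lambda$ in $F_m^n(A)$. In particular, $\widetilde{m}_\lambda = 0$ iff $V_\lambda = 0$.

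Next, I would use the construction recalled just before the proposition, together with \cite[Proposition 12.4.14]{Drensky}, to show that the polynomials $f_{T_\lambda}$, as $T_\lambda$ ranges over Young tableaux of shape $\lambda$, span the space of highest weight vectors of weight $\lambda$ in $F_m^n$. The idea is that $f_\lambda$ generates the one-dimensional highest weight space in a single copy of $W_\lambda$ inside $F_m^n$, and acting on the right by permutations $\sigma^{-1}\in S_n$ yields highest weight vectors in every isotypic copy, namely the $f_{T_\lambda}$. Since $F_m^n\cap\textnormal{Id}(A)$ is $GL_m$-invariant, passing to the quotient $F_m^n(A)$ still gives that $V_\lambda$ is spanned by the images of the $f_{T_\lambda}$. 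Therefore, $\widetilde{m}_\lambda$ equals the maximal number of linearly independent $f_{T_\lambda}$ in $F_m^n(A)$, and both statements of the proposition follow at once.

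The main technical step is proving that the family $\{f_{T_\lambda}\}$ spans the entire highest weight space of weight $\lambda$ in $F_m^n$, rather than only a proper subspace. This relies on the Young symmetrizer machinery and the precise translation between the right $S_n$-action by place permutations on $F_m^n$ and the $GL_m$-isotypic decomposition of $F_m^n$; it is exactly the content of the quoted result from \cite{Drensky} that I would invoke. Once this input is available, the rest of the argument is just linear algebra in the quotient $F_m^n(A)$, since a subset of $V_\lambda$ is linearly independent iff the corresponding lifts are linearly independent modulo $F_m^n\cap \textnormal{Id}(A)$.
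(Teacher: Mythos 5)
The paper states this proposition as a citation to Drensky and offers no proof of its own, so there is no in-text argument to compare against; your sketch reproduces the standard argument from that source and is correct. By complete reducibility $\widetilde{m}_\lambda$ equals the dimension of the weight-$\lambda$ highest weight space of $F_m^n(A)$, the $f_{T_\lambda}$ span that space in $F_m^n$ by Drensky's Proposition 12.4.14, and since $F_m^n\cap\textnormal{Id}(A)$ is a $GL_m$-submodule the quotient map carries the highest weight space onto the highest weight space, so the multiplicity is the maximal number of linearly independent images of the $f_{T_\lambda}$.
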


In what follows, the next remark will be useful.

\begin{remark}\label{comparacaochar}
	Let $A$, $B$ and $A\oplus B$ be algebras with $n$-th cocharacter 
	$$ \chi_{{n}}(A)=\displaystyle \sum_{{\lambda}\vdash \gen{n}}{m}_{{\lambda}}\chi_{{\lambda}},\quad \chi_{{n}}(B)=\displaystyle \sum_{{\lambda}\vdash {n}}{m'}_{{\lambda}}\chi_{{\lambda}}\quad \mbox{ and } \quad \chi_{{n}}(A\oplus B)=\displaystyle \sum_{{\lambda}\vdash \gen{n}} \widetilde{m}_{{\lambda}}\chi_{{\lambda}},$$ respectively. Then, $\widetilde{m}_{{\lambda}} \leq {m}_{{\lambda}}+{m'}_{{\lambda}}$, for all  ${\lambda} \vdash {n}$. Moreover, if $B \in \textnormal{var}(A)$ then ${m'}_{{\lambda}} \leq {m}_{{\lambda}}$, for all ${\lambda} \vdash {n}$. As a consequence, we have $l_n(B) \leq l_n(A)$, for all $n$.
\end{remark}

In this paper, we adopt the notation established in \cite{Daniela} for the algebras discussed here. To remain consistent with that framework, we introduce new notation for some algebras previously presented and clarify the correspondence below.

We start by considering the following subalgebras of $UT_2:$
$$A_1=A_{2,1}= F{e_{11}}+Fe_{12}\mbox{ and }A_1^*=A_{2,1}^*= F{e_{22}}+Fe_{12}.$$

\begin{lemma} \cite{GLa} \label{colengths1} For the algebras $A_1$ and $A_1^*$ we have  $\textnormal{Id}(A_1)=\langle [x,y]z \rangle_T,\, \textnormal{Id}(A_1^*)=\langle x[y,z] \rangle_T$. Moreover,  $l_n(A_1)=l_n(A_1^*)=2$ and $l_n(A_1\oplus A_1^*)=3$, for $n>3$. 
\end{lemma}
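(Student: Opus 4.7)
The plan is to verify each T-ideal by a direct computation matched against a codimension count, then to read off the cocharacter from the highest-weight-vector criterion stated above, and finally to use Remark~\ref{comparacaochar} to pin down the colength of the direct sum.

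For $A_1$, note that $(ae_{11}+be_{12})(ce_{11}+de_{12})=ace_{11}+ade_{12}$, so every commutator $[x,y]$ with $x,y\in A_1$ lies in $Fe_{12}$, and $e_{12}$ annihilates $A_1$ on the right; hence $[x_1,x_2]x_3\in\Id{A_1}$, giving $\langle [x_1,x_2]x_3\rangle_T\subseteq\Id{A_1}$. For the reverse inclusion, I would show that modulo $\langle [x_1,x_2]x_3\rangle_T$ every multilinear monomial of degree $n$ reduces to $x_{i_1}\cdots x_{i_{n-1}}x_j$ with $i_1<\cdots<i_{n-1}$ and $\{i_1,\ldots,i_{n-1},j\}=\{1,\ldots,n\}$, yielding at most $n$ candidates; then the $n$ evaluations $E_k$ defined by $x_k=e_{11}+e_{12}$ and $x_i=e_{11}$ for $i\neq k$ separate them in $A_1$, so $c_n(A_1)=n$ and equality holds. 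The argument for $A_1^*$ is entirely dual, with the first position playing the role of the last.

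For the cocharacter of $A_1$, I would test the highest weight vectors $f_{T_\lambda}$ against the identity $[x,y]z\equiv 0$. Since $\dim A_1=2$, the polynomial $St_3$ vanishes on $A_1$, killing $m_\lambda(A_1)$ for every $\lambda$ with $h(\lambda)\geq 3$. For two-row $\lambda=(n-k,k)$ with $k\geq 2$ every $f_{T_\lambda}$ contains at least one commutator that is not in the rightmost position, and therefore vanishes modulo the identity. For $\lambda=(n-1,1)$ the same identity forces every candidate $x_1^a[x_1,x_2]x_1^b$ with $b\geq 1$ to vanish, leaving only $x_1^{n-2}[x_1,x_2]$, which evaluates to $e_{12}\neq 0$ at $x_1=e_{11}$, $x_2=e_{12}$; together with the obvious $m_{(n)}=1$ this yields $\chi_n(A_1)=\chi_{(n)}+\chi_{(n-1,1)}$ and hence $l_n(A_1)=2$. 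Exchanging left and right gives $\chi_n(A_1^*)=\chi_{(n)}+\chi_{(n-1,1)}$ and $l_n(A_1^*)=2$.

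For the direct sum, Remark~\ref{comparacaochar} gives $m_\lambda(A_1\oplus A_1^*)\leq m_\lambda(A_1)+m_\lambda(A_1^*)$, so the support of the cocharacter is contained in $\{(n),(n-1,1)\}$ and each multiplicity is bounded by $2$. Since there is at most one symmetric multilinear polynomial up to scalar, $m_{(n)}(A_1\oplus A_1^*)\leq 1$, and equality holds because $x_1^n$ does not vanish on $A_1\oplus A_1^*$. The decisive point is to show $m_{(n-1,1)}(A_1\oplus A_1^*)=2$: the candidates $f_1=x_1^{n-2}[x_1,x_2]$ and $f_2=[x_1,x_2]x_1^{n-2}$ survive on opposite summands ($f_1$ is nonzero on $A_1$ but vanishes on $A_1^*$ for $n\geq 3$, and conversely for $f_2$), so any linear combination in $\Id{A_1\oplus A_1^*}=\Id{A_1}\cap\Id{A_1^*}$ must vanish on each summand separately, forcing both coefficients to be zero. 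Adding up gives $l_n(A_1\oplus A_1^*)=1+2=3$. The only real obstacle is the bookkeeping with the highest weight vectors; once one observes that $[x,y]z$ and $x[y,z]$ act respectively as a ``right'' and a ``left'' annihilator of commutators on the two summands, the colength count is immediate.
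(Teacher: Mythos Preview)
Your argument is correct. The paper states this lemma without proof, citing \cite{GLa}; your approach---computing the T-ideals by matching a spanning set against the codimension, extracting the cocharacter from the surviving highest weight vectors, and then combining Remark~\ref{comparacaochar} with the two complementary vectors $x_1^{\,n-2}[x_1,x_2]$ and $[x_1,x_2]x_1^{\,n-2}$ to obtain $m_{(n-1,1)}(A_1\oplus A_1^*)=2$---is exactly the standard argument used in that reference.
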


Define the following subalgebras of $UT_3:$

\begin{align*}
A_2 \; &=\; N_3 = F(e_{11}+e_{22}+e_{33}) + Fe_{12} +Fe_{13}+ Fe_{23} , \\[4pt]
A_4 \; &=\; A_{3,1} = Fe_{11} + Fe_{12} + Fe_{13} + Fe_{23}, \\[4pt]
A_4^* \; &=\; A_{3,1}^* = Fe_{33} + Fe_{12} + Fe_{13} + Fe_{23}, \\[4pt]
A_5 \; &=\; Fe_{22} + Fe_{12} + Fe_{13} + Fe_{23}, \\[4pt]
A_6 \; &=\; F(e_{11}+e_{33}) + Fe_{12} + Fe_{13} + Fe_{23}. \\
\end{align*}

In \cite{Sandra}, the authors provided a finite generating set for the $\textnormal{T}$-ideal of $A_6$, listed below.

\begin{align*}
f_1 \; &=\; [x,y][z,w] + [z,w][x,y] + [y,z][x,w] + [x,w][y,z] 
          + [z,x][y,w] + [y,w][z,x], \\[6pt]
f_2 \; &=\; [x,y][z,w] + [z,w][x,y] + x[z,w]y - y[z,w]x, \\[6pt]
f_3 \; &=\; 2xy[w,z] + wx[z,y] + wy[z,x] + zx[y,w] + zy[x,w] \\ 
       &\quad + [x,y][z,w] + [x,z][w,y] - [w,y][x,z] 
              + [y,z][w,x] - [w,x][y,z], \\[6pt]
f_4 \; &=\; [x,y]z[w,t],\quad  f_5 \; =\; [[x,y][z,w],t] \quad \mbox{and}\quad f_6 \; =\; [z[x,y]w,t]. \\
\end{align*}

\begin{lemma} \label{tideala6} \cite{GLa, Gut,  Sandra} For the algebras $ A_2, \  A_4, \ A_4^*, \ A_5$ and $A_6$ we have  $\textnormal{Id}(A_2)=\langle [x,y,z],[x,y][z,w] \rangle_T,$
$\textnormal{Id}(A_4)=\langle [x,y]zw \rangle_T,\, \textnormal{Id}(A_4^*)=\langle xy[z,w] \rangle_T, \, \textnormal{Id}(A_5)=\langle x[y,z]w \rangle_T\, \mbox{ and } \, \textnormal{Id}(A_6)=\langle f_1,\ldots , f_6 \rangle_T.$
\end{lemma}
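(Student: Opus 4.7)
The proof splits into two parts for each algebra: showing the listed polynomials are identities, and showing they T-generate the entire ideal $\textnormal{Id}(A)$. Since the statement aggregates results from \cite{GLa, Gut, Sandra}, the first four cases admit a uniform elementary treatment, while $A_6$ requires representation-theoretic input.

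First, I would verify the easy inclusion by direct evaluation on basis elements. For $A_2 = N_3$, write $J = \textnormal{span}_F\{e_{12},e_{13},e_{23}\}$ for the Jacobson radical; then $J^2 = Fe_{13}$, and $e_{13}$ lies in the center of $A_2$. Hence every commutator of elements of $A_2$ lies in $J$, and commutators of elements in $J$ land in the central summand $Fe_{13}$, yielding both $[x,y,z] \equiv 0$ and $[x,y][z,w] \equiv 0$. For $A_4$, $A_4^*$ and $A_5$, a commutator of two elements lies in $J$, and since $J^3 = 0$, each of $[x,y]zw$, $xy[z,w]$ and $x[y,z]w$ vanishes under any substitution because of the position of the commutator relative to the two extra factors and the idempotent $e_{ii}$. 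For $A_6$, the polynomials $f_1, \ldots, f_6$ are verified similarly, but the calculation is more delicate because the idempotent $e_{11}+e_{33}$ is not central in $A_6$.

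For the reverse inclusion, the plan is codimension matching. Let $I \subseteq \textnormal{Id}(A)$ denote the T-ideal generated by the listed polynomials. Using $I$, I would reduce any multilinear polynomial to a normal form, producing an upper bound for $\dim_F(P_n/(P_n \cap I))$. For $A_4$, the identity $[x,y]zw \equiv 0$ forces every commutator to appear at the two rightmost positions of a monomial; combined with its consequence $[x,y][z,w] \equiv 0$, multilinear polynomials modulo $I$ are spanned by $x_{\sigma(1)}\cdots x_{\sigma(n)}$ and $x_{\sigma(1)}\cdots x_{\sigma(n-2)}[x_{\sigma(n-1)},x_{\sigma(n)}]$ with suitable ordering conditions, giving exactly $c_n(A_4)$ as computed in \cite{GLa}. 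The cases $A_4^*$, $A_5$ and $A_2$ are handled analogously, with the commutators pushed to the left end, surrounded by exactly one factor on each side, or both, respectively. Once the upper bound from $I$ matches $c_n(A)$ for every $n$, equality of T-ideals follows from the inclusion $I \subseteq \textnormal{Id}(A)$ and a dimension count in $P_n$.

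The main obstacle is the case of $A_6$. The six identities $f_1, \ldots, f_6$ do not admit a single one-step reduction to a normal form, because the idempotent part $F(e_{11}+e_{33})$ acts asymmetrically on the two extremal rows of the radical and no uniform position rule for commutators is available. Following \cite{Sandra}, the argument uses the representation-theoretic machinery recalled just before this lemma: one decomposes the $GL_m$-character of $A_6$ into highest weight vectors $f_{T_\lambda}$ indexed by partitions $\lambda \vdash n$, applies the criterion $\widetilde{m}_\lambda \neq 0 \iff f_{T_\lambda} \notin \textnormal{Id}(A_6)$ to select the surviving tableaux, and then checks that $f_1, \ldots, f_6$ annihilate precisely the vanishing highest weight vectors. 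This cocharacter-based argument simultaneously yields T-ideal generation and the multiplicities needed in subsequent sections.
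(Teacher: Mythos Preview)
The paper does not give its own proof of this lemma: it is stated purely as a citation to \cite{GLa, Gut, Sandra}, with no argument following. There is therefore no ``paper's proof'' to compare your proposal against.

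That said, your sketch is a faithful reconstruction of how those references proceed. For $A_2, A_4, A_4^*, A_5$ the method is exactly direct verification of the generating identities on basis elements, reduction of $P_n$ modulo the candidate $\textnormal{T}$-ideal to a small spanning set, and a dimension count against the known codimension; your description of pushing the single commutator to the right end, left end, or middle is the correct normal-form picture in each case. For $A_6$ the reference \cite{Sandra} indeed uses the cocharacter/highest-weight-vector machinery you describe rather than an elementary normal form, for precisely the reason you identify (the idempotent $e_{11}+e_{33}$ is not central and no uniform commutator-position rule exists). One minor imprecision: for $A_4, A_4^*, A_5$ the vanishing of the generating identity does not follow merely from ``$[A,A]\subseteq J$ and $J^3=0$''; one also needs the specific one-sided action of the idempotent on the radical summands, which you allude to but do not spell out.
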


Furthermore, for the direct sum of the previous algebras, we have.

\begin{lemma} \cite{GLa, Daniela}
For the algebra $A_2$ we have    $l_n(A_2)=3$, $n> 3$. Moreover, $l_n(B_1)=4$ and $l_n(B_2)=5$, for all $n>3$, $B_1\in \{A_1\oplus A_2, A_1^*\oplus A_2\}$ and $B_2\in \{A_1\oplus A_1^*\oplus A_2, A_4,A_4^*,A_5,A_6\}$. 
\end{lemma}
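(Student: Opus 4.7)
The plan is to compute the $n$-th cocharacter $\chi_n(A)$ for each algebra $A$ appearing in the statement and then read off $l_n(A)$ as the sum of its multiplicities. The main tools are the $\textnormal{T}$-ideal descriptions of Lemma~3.7, the highest-weight-vector criterion of Proposition~3.3, and Remark~3.4 for direct sums.

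For $A_2$, the generating identities $[x,y,z]$ and $[x,y][z,w]$ say that commutators are central and that any product of two commutators vanishes. This gives the normal form
\[
P_n(A_2) \;=\; \mathrm{span}_F\{x_1 x_2 \cdots x_n\}\;\oplus\;\mathrm{span}_F\Bigl\{[x_i,x_j]\textstyle\prod_{k\ne i,j} x_k : 1\le i<j\le n\Bigr\},
\]
so that $c_n(A_2) = 1 + \binom{n}{2}$. The first summand is a trivial $S_n$-module, contributing $\chi_{(n)}$. The second is $S_n$-isomorphic to $\mathrm{Ind}_{S_2\times S_{n-2}}^{S_n}(\mathrm{sgn}\boxtimes\mathrm{triv})$, which by Pieri's rule decomposes as $\chi_{(n-1,1)}+\chi_{(n-2,1,1)}$. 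Therefore $\chi_n(A_2)=\chi_{(n)}+\chi_{(n-1,1)}+\chi_{(n-2,1,1)}$ and $l_n(A_2)=3$ for $n>3$.

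For the direct sums $A_1\oplus A_2$, $A_1^*\oplus A_2$ and $A_1\oplus A_1^*\oplus A_2$, I would combine Remark~3.4 with the known decompositions $\chi_n(A_1)=\chi_n(A_1^*)=\chi_{(n)}+\chi_{(n-1,1)}$ from Lemma~3.5. The only partitions that can occur in any of these cocharacters are $(n)$, $(n-1,1)$ and $(n-2,1,1)$. The key observation is that the highest-weight vector of shape $(n-2,1,1)$ vanishes identically in both $A_1$ and $A_1^*$ because of $[x,y]z\equiv 0$ and $x[y,z]\equiv 0$ respectively, so only the $A_2$-copy survives and $m_{(n-2,1,1)}=1$. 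On the other hand, two (respectively three) independent highest-weight vectors of shape $(n-1,1)$ survive in $A_1\oplus A_2$ (respectively $A_1\oplus A_1^*\oplus A_2$), distinguished by their images in the different summands: one is of $A_1$-type, a second of $A_1^*$-type and a third of $A_2$-type (essentially $[x_1,x_2]x_1^{n-2}$ projected to a hook). This gives multiplicity vectors $(1,2,1)$ and $(1,3,1)$, hence $l_n=4$ and $l_n=5$ respectively.

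For $A_4, A_4^*, A_5$ and $A_6$, I would apply Proposition~3.3 directly to the $\textnormal{T}$-ideal generators listed in Lemma~3.7. Each of these algebras is four-dimensional and satisfies a Capelli-type identity of rank at most $4$, so $\chi_n(A)$ is supported on partitions with $h(\lambda)\le 3$. Enumerating the highest-weight vectors $f_{T_\lambda}$ for every such shape and reducing them modulo the identities should yield in each case exactly five nonzero irreducible components, giving $l_n=5$ for $n>3$. The main obstacle will be the algebra $A_6$, whose $\textnormal{T}$-ideal is generated by six polynomials $f_1,\ldots,f_6$ entangling pairs of commutators in a nontrivial way. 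The technical core is to use the Jacobi-like relations $f_1,f_2,f_3$ to reduce every candidate highest-weight vector to a canonical form, and then use $f_4,f_5,f_6$ to detect the remaining linear dependencies and pin the multiplicities down to the claimed totals.
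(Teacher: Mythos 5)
The paper does not actually prove this lemma; it simply cites \cite{GLa, Daniela} for all the claimed colength values, so there is no paper proof to compare against. Taking your proposal on its own merits: the computation for $A_2$ is correct and cleanly done. Observing that $P_n(A_2)$ decomposes as the trivial module plus $\mathrm{Ind}_{S_2\times S_{n-2}}^{S_n}(\mathrm{sgn}\boxtimes\mathrm{triv})$ and invoking Pieri does give $\chi_n(A_2)=\chi_{(n)}+\chi_{(n-1,1)}+\chi_{(n-2,1^2)}$ (matching $\chi_n(\mathcal{G}_2)$ from Lemma~\ref{G2k}), hence $l_n(A_2)=3$. The direct-sum arguments for $A_1\oplus A_2$, $A_1^*\oplus A_2$, and $A_1\oplus A_1^*\oplus A_2$ are also sound in outline: Remark~\ref{comparacaochar} gives the upper bounds, and exhibiting independent highest weight vectors in the two positions (such as $[x_1,x_2]x_1^{n-2}$ versus $x_1^{n-2}[x_1,x_2]$) for shape $(n-1,1)$ gives the lower bounds. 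One small confusion in the write-up: for $A_1\oplus A_2$ you describe three vectors ``of $A_1$-type, $A_1^*$-type and $A_2$-type,'' which is the analysis for $A_1\oplus A_1^*\oplus A_2$; in $A_1\oplus A_2$ there is no $A_1^*$ summand and the count is two, not three.

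The genuine gap is in the treatment of $A_4$, $A_4^*$, $A_5$ and $A_6$. For those four algebras you only sketch the method and explicitly say the enumeration ``should yield'' five components and defer the $A_6$ case entirely (``the technical core is to use the Jacobi-like relations\ldots''). Nothing is actually computed, so the claim $l_n(B_2)=5$ is asserted rather than established. To close this you would either need to carry out the highest-weight-vector reductions — for $A_4$ and $A_4^*$ this is short because the $T$-ideal is generated by a single polynomial $[x,y]zw$ (resp.\ $xy[z,w]$); for $A_5$ the computation in the paper's Lemma on $\chi_n^z(A_5)$ already records $\chi_n(A_5)=\chi_{(n)}+2\chi_{(n-1,1)}+\chi_{(n-2,2)}+\chi_{(n-2,1^2)}$; but $A_6$ genuinely requires working through the six generators $f_1,\ldots,f_6$ of Lemma~\ref{tideala6} — or simply cite the original computations in \cite{GLa}. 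A further minor point: the remark that a four-dimensional algebra ``satisfies a Capelli-type identity of rank at most $4$'' is off by one as stated (dimension $4$ only gives $\mathrm{Cap}_5\equiv 0$, hence $h(\lambda)\le 4$); the sharper bound $h(\lambda)\le 3$ does hold for these particular algebras but needs its own justification.
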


Now, we consider the following subalgebras of $UT_4:$

\begin{align*}
A_7 \; &=\; F(e_{11}+e_{22}+e_{33}+e_{44}) 
          + Fe_{12} + Fe_{13} + Fe_{14} + Fe_{23} + Fe_{24} + Fe_{34}, \\[6pt]
A_8 \; &=\; F(e_{11}+e_{22}+e_{33}) 
          + Fe_{12} + Fe_{13} + Fe_{14} + Fe_{23} + Fe_{24} + Fe_{34}, \\[6pt]
A_8^* \; &=\; F(e_{22}+e_{33}+e_{44}) 
            + Fe_{12} + Fe_{13} + Fe_{14} + Fe_{23} + Fe_{24} + Fe_{34}, \\[6pt]
A_9 \; &=\; A_{4,1} 
          = Fe_{11} + F(e_{23}+e_{34}) + Fe_{12} + Fe_{13} + Fe_{14} + Fe_{24}, \\[6pt]
A_9^* \; &=\; A_{4,1}^* 
            = Fe_{44} + F(e_{12}+e_{23}) + Fe_{13} + Fe_{14} + Fe_{24} + Fe_{34}. \\ 
\end{align*}

\begin{remark}
Observe that, despite $N_4$ and $A_7$ being distinct algebras, they are PI-equivalent. Recall that, according to \cite[Theorems 5.5]{RN}, for the algebra  $N_4$ we have 
$$\chi_n(N_4)=\chi_{(n)}+2\chi_{(n-1,1)}+2\chi_{(n-2,1^2)}+\chi_{(n-2,2)}+\chi_{(n-3,2,1)}\mbox{ and } l_n(N_4)=7.$$ 
\end{remark}

\begin{lemma} \cite{Daniela, Danielasub, RN} \label{A7} For the algebras $A_7$, $A_8$, $A_8^*$, $A_9$ and $A_9^*$ we have
$$\textnormal{Id}(A_7)=\langle [x_1,x_2][x_3,x_4], [x_1,x_2,x_3,x_4] \rangle_T,$$ $$\textnormal{Id}(A_8)=\langle [x_1,x_2][x_3,x_4]x_5, [x_1,x_2,x_3]x_4 \rangle_T, \ \textnormal{Id}(A_8^*)=\langle x_5[x_1,x_2][x_3,x_4], x_4[x_1,x_2,x_3] \rangle_T,$$ $$\textnormal{Id}(A_9)=\langle [x_1,x_2][x_3,x_4], [x_1,x_2]x_3x_4x_5 \rangle_T \ \ \mbox{and} \ \ \textnormal{Id}(A_9^*)=\langle [x_1,x_2][x_3,x_4], x_1x_2x_3[x_4,x_5] \rangle_T.$$ \ 
 Moreover,
for all $n>4$ we have $l_n(A_7)=7$, $l_n(A_8)=l_n(A_8^*)=8$ and $l_n(A_9)=l_n(A_9^*)=10$.
\end{lemma}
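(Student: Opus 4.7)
I would split the statement into the T-ideal identifications (which are cited from \cite{Daniela, Danielasub, RN}, so I would only verify them briefly) and the colength computations, and for the colengths I would treat $A_7$ separately from the remaining four algebras.

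For $A_7$, the remark just before the statement records $A_7 \sim_T N_4$, so $\chi_n(A_7) = \chi_n(N_4)$. The displayed cocharacter of $N_4$ from \cite[Theorem 5.5]{RN} has multiplicities $1, 2, 2, 1, 1$, so summing gives $l_n(A_7) = 7$ for every $n > 4$, which is the claim.

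For $A_8, A_8^*, A_9, A_9^*$, I would proceed via the highest-weight-vector criterion from the proposition attributed to \cite{Drensky}. The scheme in each case is: (i) verify that the listed generators are identities of $A_i$ by direct matrix evaluation, using that $[x_1, x_2, x_3]$ already lies in the top strictly upper triangular corner of a $4\times 4$ matrix, which kills it when multiplied by any further element of the relevant subalgebra; (ii) use these generators to reduce an arbitrary highest-weight vector $f_{T_\lambda}$ to a normal form that confines the partitions $\lambda$ with $m_\lambda \neq 0$ to a short list of shapes of height at most $3$, yielding an upper bound on each $m_\lambda$; (iii) establish sharpness by exhibiting, for each admissible $\lambda$, enough linearly independent highest-weight vectors $f_{T_\lambda}$ whose evaluations at appropriate matrix units of $A_i$ are linearly independent (for instance sending the long-row variable of the tableau to the unit $e_{11}+e_{22}+e_{33}$ of $A_8$ and the remaining variables to $e_{12}, e_{13}, e_{23}, e_{14}, e_{24}, e_{34}$).

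The main obstacle is step (iii): producing explicit evaluations which witness that the upper bound from (ii) is attained, and simultaneously arguing that the exhibited vectors are linearly independent modulo the \emph{whole} T-ideal and not only modulo the stated generators. This reduces to finite rank computations, one for each admissible $\lambda$ in each algebra, and should benefit from the anti-automorphism of $UT_4$ sending $e_{ij}\mapsto e_{5-j,5-i}$, which identifies $A_8$ with $A_8^*$ and $A_9$ with $A_9^*$ as opposite algebras; since the cocharacter is invariant under passing to the opposite algebra, this halves the cases and gives $l_n(A_8) = l_n(A_8^*) = 8$ and $l_n(A_9) = l_n(A_9^*) = 10$ for all $n > 4$ once the computations for $A_8$ and $A_9$ are completed.
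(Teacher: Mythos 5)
The paper presents this lemma purely as a citation of \cite{Daniela, Danielasub, RN}; there is no internal proof to compare against, so your proposal stands or falls on its own.

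The parts you actually settle are correct. For $A_7$ you use exactly what the surrounding text supplies: $A_7\sim_T N_4$ together with the explicit cocharacter of $N_4$ from \cite[Theorem 5.5]{RN}, whose multiplicities $1,2,2,1,1$ sum to $7$. The anti-automorphism $e_{ij}\mapsto e_{5-j,5-i}$ of $UT_4$ does carry $A_8$ onto $A_8^*$ and $A_9$ onto $A_9^*$, exhibiting each starred algebra as the opposite of the unstarred one; and since the reversal map $x_{\sigma(1)}\cdots x_{\sigma(n)}\mapsto x_{\sigma(n)}\cdots x_{\sigma(1)}$ is an $S_n$-module automorphism of $P_n$ taking $P_n\cap\mathrm{Id}(A)$ onto $P_n\cap\mathrm{Id}(A^{\mathrm{op}})$, the cocharacter and hence the colength are preserved under passing to the opposite algebra. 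So $l_n(A_8)=l_n(A_8^*)$ and $l_n(A_9)=l_n(A_9^*)$ for free, which legitimately halves the work.

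What remains is the actual computation for $A_8$ and $A_9$, and there your plan has a genuine gap beyond the finite amount of linear algebra you defer. To prove the displayed T-ideal descriptions it is not enough to evaluate highest weight vectors and get nonzero results: that shows the T-ideal is no \emph{smaller} than claimed, but you must also show it is no \emph{larger}, i.e.\ that the two listed generators of, say, $\mathrm{Id}(A_8)$ already generate the whole T-ideal. The standard route is a codimension count: compute $\dim P_n/(P_n\cap\langle [x_1,x_2,x_3]x_4,\,[x_1,x_2][x_3,x_4]x_5\rangle_T)$ from a normal-form basis, verify that the listed generators hold on $A_8$, and then show the two numbers agree with $c_n(A_8)$, so that the inclusion of T-ideals is an equality. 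Your step (iii) as written only furnishes the lower bound. A small further caution: the element $e_{11}+e_{22}+e_{33}$ is a left but not a right unit of $A_8$, and $[[A_8,A_8],A_8]$ is the whole rightmost column $Fe_{14}+Fe_{24}+Fe_{34}$, not just the $(1,4)$ entry; the identity $[x_1,x_2,x_3]x_4\equiv 0$ holds because that column is a right annihilator of $A_8$, not because the triple commutator collapses to one matrix unit. This asymmetry is precisely what makes it necessary to treat $A_8$ rather than $A_8^*$ (or vice versa) first and then transfer by the opposite-algebra argument, as you correctly propose.
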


Consider the following subalgebras of $UT_5:$

\begin{align*}
A_{10} \; &=\;Fe_{11}+Fe_{12}+Fe_{13}+Fe_{14}+Fe_{15}+F(e_{23}+e_{45})+F(e_{24}-e_{35})+Fe_{25},\\[6pt]
A_{10}^* \; &=\;Fe_{55}+Fe_{15}+Fe_{25}+Fe_{35}+Fe_{45}+F(e_{12}+e_{34})+F(e_{13}-e_{24})+Fe_{14}. \\ 
\end{align*}
\begin{lemma}
    For the algebras $A_{10}$ and $A_{10}^*$ we have:

\begin{enumerate}
    \item[1)] $\textnormal{Id}(A_{10}) = \langle [x_1, x_2]x_3x_4x_5\rangle_T$ and  $\textnormal{Id}(A_{10}^*) = \langle  x_1x_2x_3[x_4, x_5]\rangle_T$.
    \item[2)] $c_n(A_{10})=c_n(A_{10}^*)=n(n-1)(n-2).$
\end{enumerate}
    
\end{lemma}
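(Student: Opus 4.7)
The plan is to treat $A_{10}$ directly; the statements for $A_{10}^*$ will follow by applying the anti-isomorphism $e_{ij}\mapsto e_{6-j,6-i}$ of $M_5(F)$, which carries $A_{10}$ onto $A_{10}^*$ and converts $[x_1,x_2]x_3x_4x_5$ into (a scalar multiple of) $x_1x_2x_3[x_4,x_5]$. For each part, I proceed by matching an upper bound coming from the $T$-ideal structure with a lower bound from explicit evaluations in $A_{10}$.

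\textbf{Inclusion $\langle [x_1,x_2]x_3x_4x_5\rangle_T\subseteq\textnormal{Id}(A_{10})$.} First I would decompose $A_{10}=Fe_{11}\oplus J$, where $J$ is the Jacobson radical with basis $\{e_{12},e_{13},e_{14},e_{15},u,v,w\}$, $u=e_{23}+e_{45}$, $v=e_{24}-e_{35}$, $w=e_{25}$. A direct check shows $Je_{11}=0$, so every commutator $[a,b]$ with $a,b\in A_{10}$ lies in $K=\mathrm{span}\{e_{12},e_{13},e_{14},e_{15},w\}$. One then verifies the chain
$$K\cdot A_{10}\subseteq K_1:=\mathrm{span}\{e_{13},e_{14},e_{15}\},\quad K_1\cdot A_{10}\subseteq K_2:=Fe_{15},\quad K_2\cdot A_{10}=0,$$
which yields $[a,b]cde=0$ for all $a,b,c,d,e\in A_{10}$.

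\textbf{Reverse inclusion and upper bound.} Set $I=\langle[x_1,x_2]x_3x_4x_5\rangle_T$. Modulo $I$, an adjacent swap at positions $i,i+1$ of a multilinear monomial $x_{\sigma(1)}\cdots x_{\sigma(n)}$ is permissible whenever $i\leq n-4$, since at least three factors then remain to the right of the commutator $[x_{\sigma(i)},x_{\sigma(i+1)}]$. Iterating, the first $n-3$ letters may be placed in increasing order, producing the canonical monomials $x_{i_1}\cdots x_{i_{n-3}}x_{j_1}x_{j_2}x_{j_3}$ with $i_1<\cdots<i_{n-3}$, of cardinality $\binom{n}{3}\cdot 3!=n(n-1)(n-2)$. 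Hence $c_n(F\langle X\rangle/I)\leq n(n-1)(n-2)$, and therefore $c_n(A_{10})\leq n(n-1)(n-2)$.

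\textbf{Lower bound and conclusion.} It remains to show the $n(n-1)(n-2)$ canonical monomials are linearly independent in $P_n(A_{10})$. For each ordered triple $(j_1,j_2,j_3)$ of distinct indices in $\{1,\ldots,n\}$, the plan is to substitute $x_k=e_{11}$ for $k\notin\{j_1,j_2,j_3\}$ and assign $x_{j_1},x_{j_2},x_{j_3}$ elements chosen from $\{e_{12},u,v\}$, exploiting the relations $e_{11}e_{1j}=e_{1j}$, $e_{1j}e_{11}=0$ for $j\neq 1$, $e_{12}uv=-e_{15}$, $e_{12}vu=e_{15}$, $e_{11}e_{12}u=e_{13}$ and $e_{11}e_{12}v=e_{14}$, to isolate the evaluated value of the intended canonical monomial while annihilating competitors. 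The main obstacle is the combinatorial bookkeeping required to verify that this family of evaluations separates every one of the $n(n-1)(n-2)$ canonical monomials; the Grassmann-like anticommutation $uv=-vu=-w$, together with the zero-divisor relations above, provides the flexibility, but the verification requires a careful case analysis on how the $e_{11}$'s interact with the nilpotent factors placed at various positions. Once this independence is secured, the upper and lower bounds coincide, giving $c_n(A_{10})=n(n-1)(n-2)$ and, in view of the equality $c_n(F\langle X\rangle/I)=c_n(A_{10})$, also $\textnormal{Id}(A_{10})=\langle[x_1,x_2]x_3x_4x_5\rangle_T$.
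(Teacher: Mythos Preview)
Your route is genuinely different from the paper's. The paper does not argue directly at all: it notes the $T$-equivalence $A_{10}\sim_T\begin{pmatrix}A_4&A_4\\0&0\end{pmatrix}$ and then quotes the Guterman--Regev result to obtain $\mathrm{Id}(A_{10})=\langle\,\mathrm{Id}(A_4)\,x\,\rangle_T=\langle[x_1,x_2]x_3x_4x_5\rangle_T$ and $c_n(A_{10})=n\,c_{n-1}(A_4)=n(n-1)(n-2)$ in one line, and handles $A_{10}^*$ symmetrically. Your direct argument---checking the identity on $A_{10}$ by the chain $K\to K_1\to K_2\to 0$ and reducing modulo $I$ to canonical monomials---is correct for the inclusion and for the upper bound $c_n(A_{10})\le n(n-1)(n-2)$, and has the merit of being self-contained.

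The lower bound, however, is where the substance lies, and the strategy you sketch does not work as stated. With $x_k=e_{11}$ for $k\notin\{a,b,c\}$ and $x_a,x_b,x_c$ taken from $\{e_{12},u,v\}$, a canonical monomial survives only when $\{j_1,j_2,j_3\}=\{a,b,c\}$, and its value is $e_{11}\,r_{j_1}r_{j_2}r_{j_3}$. But $e_{11}u=e_{11}v=0$, so this is nonzero only when $r_{j_1}=e_{12}$, and then $e_{12}r_{j_2}r_{j_3}\in Fe_{15}$ depends only on the alternating combination of the $u,v$-coordinates of $r_{j_2},r_{j_3}$. Consequently, for each unordered triple $\{a,b,c\}$ these evaluations yield only the three relations $\alpha_{(a,b,c)}=\alpha_{(a,c,b)}$, $\alpha_{(b,a,c)}=\alpha_{(b,c,a)}$, $\alpha_{(c,a,b)}=\alpha_{(c,b,a)}$, leaving three of the six coefficients undetermined. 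Bringing in two- or one-nilpotent evaluations adds only the single relation $\sum_{j_1}\alpha_{(j_1,a,b)}=0$ per unordered pair $\{a,b\}$, which still falls short. To separate the canonical monomials you would need richer substitutions (for instance elements of the form $e_{11}+u$ in the triple positions), or else adopt the paper's structural shortcut via $A_4\otimes A_1$ and Guterman--Regev, which sidesteps the evaluation problem entirely.
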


\begin{proof}
   Note that 
\[
A_{10}\;\sim_{T}\;
\begin{pmatrix}
A_{4} & A_{4} \\
0 & 0
\end{pmatrix}
\]
and therefore, by \cite{Gut}, we have $$\textnormal{Id}(A_{10})=\langle \textnormal{Id}(A_{4})\, x \rangle_{T}
   = \langle  [x_{1},x_{2}]x_{3}x_{4}x_{5} \rangle_{T}\quad \mbox{ and }\quad \textnormal{Id}(A_{10}^*)
   = \langle  x_1x_2x_3[x_4, x_5] \rangle_{T},$$  and $
c_{n}(A_{10})=c_{n}(A_{10}^*) = n\, c_{n-1}(A_{4})=n(n-1)(n-2).$
\end{proof}
\begin{remark}
 Observe that, by the preceding lemma, we have  $A_{9} \in \mathrm{var}(A_{10})$. Therefore, $$l_{n}(A_{10}^*) = l_{n}(A_{10}) \geq l_{n}(A_{9})= 10,\mbox{ for all }n>4.$$
\end{remark}

\black


\begin{lemma}\label{G2k} \cite{Petro, RN}
 For the algebra $\mathcal{G}_{2k}$  we have $\textnormal{Id}(\mathcal{G}_{2k})=\langle \textnormal[x_1,x_2,x_3], [x_1,x_2][x_3,x_4]\cdots [x_{2k+1},x_{2k+2}] \rangle_{T}$, $\chi_n(\mathcal{G}_{2k})=\sum_{i=0}^{2k} \chi_{(n-i,1^i)}$ and $l_n(\mathcal{G}_{2k})=2k+1$, for all $k\geq 1$.
\end{lemma}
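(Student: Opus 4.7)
The plan splits into three steps: verifying that the two polynomials displayed on the right are identities of $\mathcal{G}_{2k}$, computing the multiplicities $m_{(n-i,1^{i})}$ through hook highest weight vectors, and closing the T-ideal equality via a codimension count.

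\emph{Step 1 (the identities).} The identity $[x_1,x_2,x_3]\equiv 0$ is a classical identity of $\mathcal{G}$ and therefore descends to any subalgebra. For the $(k{+}1)$-fold commutator product I would exploit the $\mathbb{Z}_{2}$-grading $\mathcal{G}=\mathcal{G}^{(0)}\oplus\mathcal{G}^{(1)}$: for any $a,b\in\mathcal{G}$ one has $[a,b]=2a^{(1)}b^{(1)}\in\mathcal{G}^{(0)}$, a linear combination of monomials $e_{i_1}\cdots e_{i_r}$ with $r\geq 2$. Multiplying $k{+}1$ such commutators produces a sum of monomials of Grassmann-degree at least $2(k{+}1)=2k{+}2$, exceeding the maximal length $2k$ of any nonzero monomial in $\mathcal{G}_{2k}$, so the product vanishes.

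\emph{Step 2 (cocharacter and colength).} Since $\mathcal{G}_{2k}\in\textnormal{var}(\mathcal{G})$ and $\chi_{n}(\mathcal{G})=\sum_{i=0}^{n-1}\chi_{(n-i,1^{i})}$ with every multiplicity equal to $1$, Remark~\ref{comparacaochar} forces $m_{\lambda}(\mathcal{G}_{2k})=0$ for non-hook $\lambda$ and $m_{(n-i,1^{i})}(\mathcal{G}_{2k})\in\{0,1\}$. Next I would analyse the hook highest weight vector $f_{i}=St_{i+1}(x_{1},\ldots,x_{i+1})\,x_{1}^{n-i-1}$. For $i\geq 2k+1$, I would show $f_{i}\in\textnormal{Id}(\mathcal{G}_{2k})$: after expanding each substituted element via the grading, the purely odd substitution yields a product of $i{+}1\geq 2k{+}2$ odd generators (zero in $\mathcal{G}_{2k}$); substitutions with a single central argument give $St_{i+1}(c,\ldots)$, which vanishes when $i{+}1$ is even by the alternating-sign sum $\sum_{j=1}^{i+1}(-1)^{j-1}=0$ and reduces to $St_{i}$ of size $\geq 2k{+}1$ when $i{+}1$ is odd; substitutions with two or more central factors vanish since the standard polynomial is alternating. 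For $0\leq i\leq 2k$ I would exhibit an explicit nonzero evaluation: when $i$ is even, take $x_{1}\mapsto 1$, $x_{j}\mapsto e_{j-1}$, yielding $St_{i+1}(1,e_{1},\ldots,e_{i})=St_{i}(e_{1},\ldots,e_{i})=i!\,e_{1}\cdots e_{i}\neq 0$; when $i$ is odd, take $x_{1}\mapsto 1+e_{i+1}$, $x_{j}\mapsto e_{j-1}$ and exploit $e_{i+1}^{2}=0$ to obtain a nonzero multiple of $e_{1}\cdots e_{i+1}$. Thus $m_{(n-i,1^{i})}(\mathcal{G}_{2k})=1$ exactly for $0\leq i\leq 2k$, giving the claimed cocharacter and $l_{n}(\mathcal{G}_{2k})=2k+1$.

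\emph{Step 3 (T-ideal equality and the main obstacle).} Denote by $J$ the T-ideal in the statement. Step~1 gives $J\subseteq\textnormal{Id}(\mathcal{G}_{2k})$, so it suffices to show $c_{n}(J)\leq c_{n}(\mathcal{G}_{2k})$. Modulo $J$ commutators are central, and applying the Leibniz rule to the identity $[x_{1}x_{2},x_{3},x_{4}]\equiv 0$ yields the reduction $[x_{1},x_{3}][x_{2},x_{4}]+[x_{1},x_{4}][x_{2},x_{3}]\equiv 0 \pmod{J}$. Iterating, every perfect matching of any fixed $2s$-element set of indices collapses (up to sign) to a single commutator product modulo $J$, while the $(k{+}1)$-fold commutator identity kills those with $s>k$. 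Hence $P_{n}$ is spanned modulo $J$ by the normal forms $x_{i_{1}}\cdots x_{i_{n-2s}}[x_{j_{1}},x_{j_{2}}]\cdots[x_{j_{2s-1}},x_{j_{2s}}]$ with $0\leq s\leq k$, giving $c_{n}(J)\leq \sum_{s=0}^{k}\binom{n}{2s}=\sum_{i=0}^{2k}\binom{n-1}{i}=c_{n}(\mathcal{G}_{2k})$, where the middle equality is Pascal's rule and the last one comes from Step~2. I expect the hardest point to be the vanishing analysis of $St_{i+1}$ in Step~2 for $i\geq 2k+1$: the case split by the Grassmann parity of each substituted element, together with the interaction between degree overflow in the odd component and alternating-sign cancellation when central factors are present, is the most delicate part of the argument.
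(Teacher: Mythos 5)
The lemma is cited in the paper from the references (Giambruno--La Mattina--Petrogradsky and do Nascimento--dos Santos--Vieira) without a reproduced proof, so there is no internal argument to compare against; your proposal is a self-contained fill-in, and it is correct. Your strategy — Step~1 giving $J\subseteq\textnormal{Id}(\mathcal{G}_{2k})$, Step~2 producing the lower bound $c_n(\mathcal{G}_{2k})\geq\sum_{i=0}^{2k}\binom{n-1}{i}$ by exhibiting nonzero evaluations of the hook highest weight vectors, and Step~3 producing the matching upper bound $c_n(J)\leq\sum_{s=0}^{k}\binom{n}{2s}$ via the normal-form spanning set — sandwiches all the inequalities into equalities, which simultaneously forces $J=\textnormal{Id}(\mathcal{G}_{2k})$ and pins down the cocharacter and colength. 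One small remark on Step~2: showing that the \emph{initial} highest weight vector $f_i=St_{i+1}(x_1,\ldots,x_{i+1})x_1^{n-i-1}$ lies in $\textnormal{Id}(\mathcal{G}_{2k})$ for $i\geq 2k+1$ does not, by itself, prove $m_{(n-i,1^i)}=0$ — in principle a different filling $T_\lambda$ could give a nonzero $f_{T_\lambda}$ — but this gap is harmless here because the codimension sandwich of Step~3 already forces the vanishing of all multiplicities beyond $i=2k$; you could therefore drop the vanishing analysis of $St_{i+1}$ for $i\geq 2k+1$ entirely and only keep the nonvanishing half of Step~2, which is the part the argument actually needs. The parity computation $St_{i+1}(1,y_1,\ldots,y_i)=\bigl(\sum_{j=0}^{i}(-1)^j\bigr)St_i(y_1,\ldots,y_i)$, the Plücker-type relation $[x_1,x_3][x_2,x_4]+[x_1,x_4][x_2,x_3]\equiv 0\pmod{J}$ obtained from $[x_1x_2,x_3,x_4]$, and the binomial identity $\sum_{s=0}^{k}\binom{n}{2s}=\sum_{i=0}^{2k}\binom{n-1}{i}$ are all checked correctly.
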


Consider the algebras below $$\overline{{\mathcal{G}_4}}=\begin{pmatrix}
\mathcal{G}_4 & \mathcal{G}_4 \\
0 & 0
\end{pmatrix}\cong \mathcal{G}_4\otimes A_1\quad \mbox{ and }\quad {\overline{\mathcal{G}_4^*}}=\begin{pmatrix}
0 & \mathcal{G}_4 \\
0 & \mathcal{G}_4
\end{pmatrix}\cong \mathcal{G}_4\otimes A_1^*.$$

\begin{lemma} \label{g4barra1} 
    For the algebra ${\overline{\mathcal{G}_4}}$ we have:
\begin{enumerate}
    \item[1)] $\textnormal{Id}({\overline{\mathcal{G}_4}})=\langle [x_1,x_2,x_3]x_4, [x_1,x_2][x_3,x_4][x_5,x_6]x_7\rangle_T.$ 
     
\item[2)] $c_n({\overline{\mathcal{G}_4}})=n+(n-2)\binom{n}{2}+(n-4)
\binom{n}{4}.$

\item[3)] The polynomials 
$$x_1\cdots x_n,\, x_{i_1}\cdots x_{i_{n-2}}[x_n,x_t],\, x_{i_1}\cdots x_{i_{n-3}}[x_i,x_j]x_k,\, x_{i_1}\cdots x_{i_{n-5}}[x_{l},x_m][x_p,x_q]x_r,$$ where $i_1< \cdots <i_{n-2},$ $i<j$ and $l<m<p<q$, constitute a basis of $P_n$ modulo $P_n\cap \textnormal{Id}({\overline{\mathcal{G}_4}}).$
\end{enumerate}

Similarly, $\textnormal{Id}({\overline{\mathcal{G}_4^*}})=\langle x_1[x_2,x_3,x_4], x_1[x_2,x_3][x_4,x_5][x_6,x_7]\rangle_T$.

\end{lemma}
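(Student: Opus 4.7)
The plan is to mirror the treatment of $A_{10}$ in the previous lemma, exploiting the matrix realization $\overline{\mathcal{G}_4}=\begin{pmatrix}\mathcal{G}_4 & \mathcal{G}_4 \\ 0 & 0\end{pmatrix}$. A direct calculation shows that any product of $n$ elements $a_i=\begin{pmatrix}u_i & v_i \\ 0 & 0\end{pmatrix}$ takes the form
\[
a_1\cdots a_n=\begin{pmatrix} u_1\cdots u_n & u_1\cdots u_{n-1}v_n \\ 0 & 0\end{pmatrix},
\]
so the $(1,1)$-entry of $f(a_1,\ldots,a_n)$ coincides with $f(u_1,\ldots,u_n)\in\mathcal{G}_4$. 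Consequently, if $f\in\textnormal{Id}(\mathcal{G}_4)$, then $f(a_1,\ldots,a_n)$ is strictly upper triangular, and right-multiplication by any element of $\overline{\mathcal{G}_4}$ annihilates it because every element has zero bottom row. Combined with the generators of $\textnormal{Id}(\mathcal{G}_4)$ from Lemma \ref{G2k}, this establishes the inclusion $\langle[x_1,x_2,x_3]x_4,\,[x_1,x_2][x_3,x_4][x_5,x_6]x_7\rangle_T\subseteq\textnormal{Id}(\overline{\mathcal{G}_4})$.

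For the reverse inclusion and the codimension formula I would invoke the same result used for $A_{10}$ just above: whenever $\overline{B}=\begin{pmatrix}B & B\\ 0 & 0\end{pmatrix}$, one has $\textnormal{Id}(\overline{B})=\langle\textnormal{Id}(B)\cdot x\rangle_T$ and $c_n(\overline{B})=n\,c_{n-1}(B)$. Applied to $B=\mathcal{G}_4$ this settles Item (1) and gives $c_n(\overline{\mathcal{G}_4})=n\,c_{n-1}(\mathcal{G}_4)$. From Lemma \ref{G2k} we have
\[
c_{n-1}(\mathcal{G}_4)=\sum_{i=0}^{4}\binom{n-2}{i}=1+\binom{n-1}{2}+\binom{n-1}{4},
\]
after grouping consecutive binomials via Pascal's identity. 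The elementary relation $n\binom{n-1}{k}=(n-k)\binom{n}{k}$ then converts $n\,c_{n-1}(\mathcal{G}_4)$ into $n+(n-2)\binom{n}{2}+(n-4)\binom{n}{4}$, establishing Item (2).

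For Item (3), every multilinear polynomial admits a unique decomposition $f=\sum_{k=1}^{n}g_k(x_1,\ldots,\widehat{x_k},\ldots,x_n)\,x_k$ according to its rightmost variable, and the characterization underlying Item (1) yields a linear isomorphism $P_n(\overline{\mathcal{G}_4})\cong\bigoplus_{k=1}^{n}P_{n-1}^{(k)}(\mathcal{G}_4)$. For each $k$, a basis of $P_{n-1}^{(k)}(\mathcal{G}_4)$ is furnished by the hook cocharacter of $\mathcal{G}_4$ (Lemma \ref{G2k}): an ordered monomial together with products carrying one or two outer commutators. Appending $x_k$ on the right and regrouping across $k$---using $[x_1,x_2,x_3]x_4\equiv 0$, which allows commutators to be moved past any non-terminal variable---produces the four canonical families listed, and linear independence follows from the direct-sum structure. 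The statement for $\overline{\mathcal{G}_4^*}$ is obtained by the left--right symmetric argument, writing $f=\sum_k x_k\,h_k(x_1,\ldots,\widehat{x_k},\ldots,x_n)$ and using that products in $\overline{\mathcal{G}_4^*}$ place the free entry in the first position. The main difficulty will be the final regrouping step: matching the natural basis indexed by $(k,\text{hook})$ with the four canonical families in the statement requires careful bookkeeping to identify when distinct choices of $k$ yield the same polynomial modulo $\textnormal{Id}(\overline{\mathcal{G}_4})$.
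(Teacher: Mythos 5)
Your treatment of items (1) and (2) matches the paper's: both rely on the result (from the Guterman--Regev reference used for $A_{10}$) that $\textnormal{Id}(\overline{B}) = \langle \textnormal{Id}(B)\,x\rangle_T$ and $c_n(\overline{B}) = n\,c_{n-1}(B)$ when $\overline{B} = \left(\begin{smallmatrix}B & B\\ 0 & 0\end{smallmatrix}\right)$, and your explicit verification of the easy inclusion via the $(1,1)$-entry computation and the zero bottom row is a correct supplement. The binomial manipulation $n\binom{n-1}{k} = (n-k)\binom{n}{k}$ for item (2) is also correct.

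For item (3), however, you diverge from the paper and leave a genuine gap. The paper proves linear independence of the four claimed families directly, by feeding explicit substitutions $x_v \mapsto 1\otimes e_{11}$, $1\otimes e_{12}$, $e_i\otimes e_{11}$ into a general linear combination and killing the coefficients $\alpha$, $\beta_t$, $\gamma_{i,j,k}$, $\delta_{l,m}^{p,q,r}$ one family at a time; combined with the codimension count this gives a basis. Your proposal instead constructs a basis from the isomorphism $P_n(\overline{\mathcal{G}_4}) \cong \bigoplus_{k=1}^n P_{n-1}^{(k)}(\mathcal{G}_4)$, using the hook basis of $\mathcal{G}_4$ in each summand and appending $x_k$. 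That isomorphism does hold (it follows from the block matrix computation plus the fact that $\mathcal{G}_4$ is unital, so $v_k = 1$ isolates $g_k$), but the resulting natural basis is indexed by $(k,\text{hook})$ and is not the one in the statement. For example, for $k < n$ the basis vector $x_1\cdots\widehat{x_k}\cdots x_n\cdot x_k$ must be rewritten as $x_1\cdots x_n$ plus a combination of commutator terms, and analogous reshuffling is needed for the one- and two-commutator families; you acknowledge this "bookkeeping" as the main difficulty but do not carry it out. Without that change of basis (or an independent spanning/independence argument plus the dimension count), the proposal does not establish item (3). The paper's direct-evaluation route avoids the regrouping entirely and is the shorter path to the conclusion.
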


\begin{proof}
    According to \cite{Gut}, $\textnormal{Id}({\overline{\mathcal{G}_4}})= 
    \langle \textnormal{Id}(\mathcal{G}_4)x \rangle_T= \langle [x_1,x_2,x_3]x_4, [x_1,x_2][x_3,x_4][x_5,x_6]x_7 \rangle_T  $ and $c_n({\overline{\mathcal{G}_4}})=nc_{n-1}(\mathcal{G}_4)=n+(n-2)\binom{n}{2}+(n-4)
\binom{n}{4}$. 

Now, we prove that the polynomials defined in $3)$ are linearly independent modulo $\textnormal{Id}({\overline{\mathcal{G}_4}})$. Consider 
$$f=\displaystyle \alpha x_1\cdots x_n+ \underset{\underset{i_1<\cdots <i_{n-2}}{t=1}}{\overset{{n-1}}{\sum}} \beta_t x_{i_1}\cdots x_{i_{n-2}}[x_n,x_t]+\underset{\underset{i_1<\cdots <i_{n-3} }{i<j,\,k}}{\sum} \gamma_{i,j,k} x_{i_1}\cdots x_{i_{n-3}}[x_i,x_j]x_k+$$ $$ \underset{\underset{i_1<\cdots <i_{n-5}}{l<m<p<q}}{\sum} \delta_{l,m}^{p,q,r} x_{i_1}\cdots x_{i_{n-5}}[x_{l},x_m][x_p,x_q]x_r\in \textnormal{Id}({\overline{\mathcal{G}_4^*}}).$$

First, considering the evaluation $x_v\mapsto 
1 \otimes e_{11}$, for all $v\in \{1,\ldots , n\}$, we have $\alpha=0$. Second, we fix $1\leq t\leq n-1$ and considering the evaluation $x_t \mapsto  1 \otimes e_{12}$ and $x_i\mapsto 
1 \otimes e_{11}$, for all $i\neq t$, we obtain $\beta_t=0$. Third, we fix $i,j,k$, with $i<j$, the evaluation $x_{i}\mapsto e_{1}\otimes e_{11},$ $x_{j}\mapsto e_2\otimes e_{11},$ $x_{k}\mapsto 1\otimes e_{12}$ and $x_{v}\mapsto 1\otimes e_{11},$ for all $v\notin \{i,j,k\}$, gives $\gamma_{i,j,k}=0$. Finally, we fix $l,m,p,q,r$, with $l<m<p<q$, the evaluation $x_{l}\mapsto e_{1}\otimes e_{11},$ $x_{m}\mapsto e_{2}\otimes e_{11},$ $x_{p}\mapsto e_{3}\otimes e_{11}$, $x_{q}\mapsto e_{4}\otimes e_{11}$ and $x_{r}\mapsto 1\otimes e_{12}$ and $x_{v}\mapsto {1}\otimes e_{11},$ for all $v\notin \{l,m,p,q,r\}$, gives $\delta_{l,m}^{p,q,r}=0$.

Since the number of polynomials defined in $3)$ is equal to the $n$-th codimension of ${\overline{\mathcal{G}_4}}$ and we have proved that they are linearly independent modulo $\textnormal{Id}({\overline{\mathcal{G}_4}})$, then we conclude that they constitute a basis of $P_n$ modulo $P_n\cap \textnormal{Id}({\overline{\mathcal{G}_4}})$.

\end{proof}

\begin{remark}
By \cite[Lemma 9]{Daniela}, we have $A_8\in \textnormal{var}({\overline{\mathcal{G}_4}})$. Therefore, $l_n({\overline{\mathcal{G}_4^*}})=l_n({\overline{\mathcal{G}_4}})\geq l_n(A_8)=8.$

\end{remark}

In the following, we consider the direct sum of some of the previous algebras. 

\begin{lemma} \label{idealg4a1}
    For the algebra $\mathcal{G}_4\oplus A_1$ and $\mathcal{G}_4\oplus A_1^*$ we have

    \begin{enumerate}
        \item[1)]  $\textnormal{Id}(\mathcal{G}_4\oplus A_1^*)=\langle x_1[x_2,x_3,x_4],  [x_1,x_2][x_3,x_4][x_5,x_6] \rangle_T.$

\item[2)] $\textnormal{Id}(\mathcal{G}_4\oplus A_1)=\langle [x_1,x_2,x_3]x_4,  [x_1,x_2][x_3,x_4][x_5,x_6] \rangle_T.$

\item[3)] $\chi_n(\mathcal{G}_4\oplus A_1^*)=\chi_n(\mathcal{G}_4\oplus A_1)= \chi_{(n)}+2\chi_{(n-1,1)}+\chi_{(n-2,1^2)}+\chi_{(n-3,1^3)}+\chi_{(n-4,1^4)}.$ 

\item[4)] $l_n(\mathcal{G}_4\oplus A_1^*)=l_n(\mathcal{G}_4\oplus A_1)=6$.
    \end{enumerate}
\end{lemma}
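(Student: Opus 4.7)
The plan is to compute the cocharacter in part~(3) first, sum multiplicities for part~(4), and then use the resulting codimension to promote the easy inclusion in part~(2) to an equality; part~(1) follows from part~(2) by passing to opposite algebras, which swaps $A_1$ and $A_1^*$ while fixing $\mathcal{G}_4$ up to PI-equivalence.

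Upper bounds on the multiplicities come from Remark~\ref{comparacaochar}: combining $\chi_n(\mathcal{G}_4) = \sum_{i=0}^{4}\chi_{(n-i,1^i)}$ from Lemma~\ref{G2k} with $\chi_n(A_1) = \chi_{(n)} + \chi_{(n-1,1)}$ (which one derives from $l_n(A_1) = 2$ in Lemma~\ref{colengths1} together with $c_n(A_1) = n$) gives $m_{(n)} \le 2$, $m_{(n-1,1)} \le 2$, $m_{(n-i,1^i)} \le 1$ for $i \in \{2,3,4\}$, and $m_\lambda = 0$ otherwise. The inclusions $\mathcal{G}_4, A_1 \in \textnormal{var}(\mathcal{G}_4 \oplus A_1)$ yield matching lower bounds $m_\lambda \geq 1$ for each admissible hook. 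Since the unique (up to scalar) symmetric multilinear polynomial is non-zero on the non-nilpotent algebra $\mathcal{G}_4 \oplus A_1$, $m_{(n)} = 1$. The delicate point is $m_{(n-1,1)} = 2$, for which I would exhibit two highest weight vectors
\[
g_1 = [x_1,x_2]\,x_1^{n-2}, \qquad g_2 = x_1^{n-2}[x_1,x_2] - [x_1,x_2]\,x_1^{n-2}.
\]
On $A_1$, $g_1 \equiv 0$ by $[x,y]z \equiv 0$, while the evaluation $x_1 = e_{11}, x_2 = e_{12}$ gives $g_2 = e_{12} \neq 0$; on $\mathcal{G}_4$, $g_2 \equiv 0$ by centrality of $[x_1,x_2]$ (a consequence of $[x_1,x_2,x_3]=0$), while $x_1 = 1 + e_1, x_2 = e_2$ gives $g_1 = 2 e_1 e_2 \neq 0$. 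A relation $\alpha g_1 + \beta g_2 \in \textnormal{Id}(\mathcal{G}_4 \oplus A_1)$ then restricts to each summand and forces $\alpha = \beta = 0$. This completes part~(3); part~(4) is the sum $1 + 2 + 1 + 1 + 1 = 6$.

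For part~(2), write $I := \langle [x_1,x_2,x_3]x_4,\, [x_1,x_2][x_3,x_4][x_5,x_6] \rangle_T$. The inclusion $I \subseteq \textnormal{Id}(\mathcal{G}_4 \oplus A_1)$ is routine: both generators vanish on $\mathcal{G}_4$ (the first via $[x_1,x_2,x_3] = 0$, the second via Lemma~\ref{G2k}) and on $A_1$ (the first after expanding $[x_1,x_2,x_3]x_4 = [x_1,x_2]x_3 x_4 - x_3[x_1,x_2]x_4$, the second trivially). The cocharacter yields
\[
c_n(\mathcal{G}_4 \oplus A_1) = 1 + 2(n-1) + \binom{n-1}{2} + \binom{n-1}{3} + \binom{n-1}{4},
\]
and it remains to verify that $\dim P_n/(P_n \cap I)$ does not exceed this value. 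Reading $[x_1,x_2,x_3]x_4 = 0$ as $[x_1,x_2]x_3 x_4 = x_3[x_1,x_2]x_4$ permits sliding a single commutator past a variable whenever at least one further variable follows, so each one-commutator monomial reduces modulo $I$ either to a terminal form $x_{i_1}\cdots x_{i_{n-2}}[x_a,x_b]$ or to a penultimate form $x_{i_1}\cdots x_{i_{n-3}}[x_a,x_b]x_c$; the second relation in $I$ restricts us to at most two commutators. A combinatorial count of these normal forms, accounting for Jacobi-type relations already holding in the free algebra, matches $c_n(\mathcal{G}_4 \oplus A_1)$, forcing $I = \textnormal{Id}(\mathcal{G}_4 \oplus A_1)$. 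Part~(1) follows by reversal, since $A_1^{op} \cong A_1^*$ and $\mathcal{G}_4^{op} \sim_T \mathcal{G}_4$, so reversing the generators of $I$ produces those of $\textnormal{Id}(\mathcal{G}_4 \oplus A_1^*)$.

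The main obstacle is twofold: separating the behavior of the two highest weight vectors of shape $(n-1,1)$ on the respective summands, and controlling the normal forms modulo $I$ precisely enough to match the cocharacter-derived codimension, in particular handling the interaction between interior and terminal commutator placements together with the Jacobi corrections.
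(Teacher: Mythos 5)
Your plan mirrors the paper's: prove part~3 by bounding multiplicities, sum them for part~4, close part~2 by matching a spanning set modulo $I$ against the codimension, and get part~1 by reversal (the paper instead proves part~1 directly and declares part~2 ``proved similarly'', which is the same symmetry). The one genuinely different step is in part~3: for $m_{(n-1,1)}=2$ the paper argues by contradiction --- if $m_{(n-1,1)}=1$ then $\chi_n(\mathcal{G}_4\oplus A_1^*)=\chi_n(\mathcal{G}_4)$, forcing $P_n\cap\textnormal{Id}(\mathcal{G}_4\oplus A_1^*)=P_n\cap\textnormal{Id}(\mathcal{G}_4)$, which fails for $[x_1,x_2,x_3]x_4\cdots x_n$ --- whereas you exhibit the explicit highest weight vectors $g_1,g_2$ and separate them by restricting to the two summands. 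Your version is constructive and self-contained; the paper's saves a direct verification at the cost of leaning on the computed multiplicity of $(n-1,1)$ in each summand. Both are valid.

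Where you fall short of a complete argument is in part~2. Your enumeration of normal forms modulo $I$ stops at the one-commutator types (terminal and penultimate); you never list the two-commutator normal forms nor the normal form carrying a length-three commutator, and you leave the Jacobi reductions and the final count as an assertion. The paper does the work: after PBW and the reductions modulo $I$ one gets exactly four types --- $x_1\cdots x_n$, $x_{i_1}\cdots x_{i_{n-2}}[x_a,x_b]$ with $a<b$, $x_{i_1}\cdots x_{i_{n-4}}[x_c,x_d][x_e,x_f]$ with $c<d<e<f$ (the ordering is forced by linearizing $w[y,x][y,z]\equiv 0$, a consequence of $x_1[x_2,x_3,x_4]$), and $[x_1,x_j,x_k]x_{i_1}\cdots x_{i_{n-3}}$ with $k<i_1<\cdots$ ($i=1$ fixed via $[[x_1,x_2],[x_3,x_4]]$ and Jacobi) --- of cardinalities $1$, $\binom{n}{2}$, $\binom{n}{4}$ and $n-1$, whose sum equals $c_n$. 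You correctly flag ``handling the interaction between interior and terminal commutator placements together with the Jacobi corrections'' as the main obstacle, but that is precisely the part that remains to be done; the penultimate form you single out, for instance, decomposes as a terminal form plus a triple-commutator tail, so it is not itself a new normal form and should not be double-counted. Everything else in the proposal is sound, including $A_1^{\mathrm{op}}\cong A_1^*$ and $\mathcal{G}_4^{\mathrm{op}}\sim_T\mathcal{G}_4$.
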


\begin{proof}

We start by proving the third item, and so consider $\chi_n(\mathcal{G}_4\oplus A_1^*)=\sum_{\lambda\vdash n} m_
\lambda \chi_{\lambda} $. According to Remark  \ref{comparacaochar} and \cite[Lemmas 3]{GLa} we have $$m_{\lambda}=1,
\quad \quad 1\leq m_{(n-1,1)}\leq 2\quad \mbox{ and } 
\quad m_{
\mu}=0,$$ for all $\lambda,\mu \vdash n$ with $\lambda\in \{(n), (n-2,1^2),(n-3,1^3),(n-4,1^4)\}$ and $\mu\notin \{\lambda, (n-1,1)\}$. Observe that if $m_{(n-1,1)} \neq 2$, then $\chi_n(\mathcal{G}_4\oplus A_1^*)=\chi_n(\mathcal{G}_4)$. Since $\mathcal{G}_4\in \textnormal{var}(\mathcal{G}_4\oplus A_1^*)$ then we must have $P_n\cap \textnormal{Id}(\mathcal{G}_4\oplus A_1^*)=  P_n\cap \textnormal{Id}(\mathcal{G}_4)$, a contradiction, since the polynomial $[x_1,x_2,x_3]x_4\cdots x_n\in P_n\cap \textnormal{Id}(\mathcal{G}_4)\setminus  P_n\cap \textnormal{Id}(\mathcal{G}_4\oplus A_1^*)$. Therefore, $m_{(n-1,1)}=2$ and the proof follows.

For the first item, consider $$I=\langle x_1[x_2,x_3,x_4],  [x_1,x_2][x_3,x_4][x_5,x_6]\rangle .$$ It is straightforward to check that $I\subseteq \textnormal{Id}(\mathcal{G}_4\oplus A_1^*)$.

 By the Poincaré-Birkhoff-Witt theorem, any element of $P_n$ can be written modulo $\textnormal{Id}(\mathcal{G}_4\oplus A_1^*)$ as a linear combination of the polynomials of the types

\begin{enumerate}
    \item[(1)] $x_{1}\cdots x_n,$
\item[(2)] $x_{i_1}\cdots x_{i_{n-2}}[x_{a},x_b]$, with $i_1<\cdots <i_{n-2}$ and $a<b$, 

\item[(3)] $x_{i_1}\cdots x_{i_{n-4}}[x_{c},x_d][x_e,x_f]$, with $i_1<\cdots <i_{n-4}$,

\item[(4)] $[x_i,x_j,x_k]x_{i_1}\cdots x_{i_{n-3}}$, with $i_1<\cdots <i_{n-3}$. 

\end{enumerate}

Note that the polynomial $w[y,x][y,z]$ is a consequence of $x_1[x_2,x_3,x_4]$. By linearization, we have $w[y_1,x][y_2,z]\equiv -w[y_2,x][y_1,z]$ modulo $\textnormal{Id}(\mathcal{G}_4\oplus A_1^*)$ and so we can assume $c<d<e<f$ in the polynomials of type $(3)$.    

For polynomials of type $(4)$, we observe that the polynomial $h=[[x_1,x_2],[x_3,x_4]]$ is a consequence of $f=x_1[x_2,x_3,x_4]$. Now, we note that the polynomial $p=[x_1,x_2,x_3]x_4-[x_1,x_2,x_4]x_3$ is a consequence of $h$ and $f$. The polynomial $p$ with the identity $[x_1,x_2,x_3]=[x_1,x_3,x_2]+[x_3,x_2,x_1]$, allows us to assume $i=1$ and $k<i_1<\cdots <i_{n-3}$.

Since $c_n(\mathcal{G}_4\oplus A_1^*))=
\chi(\mathcal{G}_4\oplus A_1^*))(1)=1+\binom{n}{2}+\binom{n}{4}+(n-1)$ and the previous polynomials generate $P_n$ modulo $\textnormal{Id}(\mathcal{G}_4\oplus A_1^*))$ then they must be linearly independent modulo  $\textnormal{Id}(\mathcal{G}_4\oplus A_1^*))$. Therefore, we conclude that $I=\textnormal{Id}(\mathcal{G}_4\oplus A_1^*)$.

The second item is proved similarly.

\end{proof}

An argument analogous to the one used in the proof of the previous lemma 
can be applied to the algebras $A_4 \oplus A_1$, $A_4 \oplus A_1^*$, $A_4^* \oplus A_1$, $A_4^* \oplus A_1^*$, $\mathcal{G}_4 \oplus A_1 \oplus A_1^*$, $\mathcal{G}_4 \oplus A_4$ and $\mathcal{G}_4 \oplus A_4^*$. For the remaining direct sums of algebras, it is possible to construct a sufficient number of linearly independent highest weight vector in order to compute the corresponding multiplicities. Hence, we omit the proof.

\begin{lemma} \label{charac}
 For the direct sum of the previous algebras, we have: 
\begin{enumerate}
    \item[1)] $\chi_n(A_4\oplus A_1^*)=\chi_n(A_4^*\oplus A_1)= \chi_{(n)}+3\chi_{(n-1,1)}+\chi_{(n-2,1^2)}+\chi_{(n-2,2)}$ and $l_n(A_4\oplus A_1)=l_n(A_4\oplus A_1^*)=6$.

       \item[2)] $\chi_n(B\oplus A_2)=\chi_{(n)}+3\chi_{(n-1,1)}+2\chi_{(n-2,1^2)}+\chi_{(n-2,2)}$ and $l_n(B\oplus A_2)=7$, for all $B \in \{A_4, A_4^*,A_5, A_6\}.$

          \item[3)] $\chi_n(B_1\oplus B_2)=\chi_{(n)}+4\chi_{(n-1,1)}+2\chi_{(n-2,1^2)}+2\chi_{(n-2,2)}$ and $l_n(B_1\oplus B_2)=9$, for all $B_1\neq B_2$ and $B_1,B_2\in \{A_4,A_4^*, A_5, A_6\}$.

    \item[4)] $\chi_n(\mathcal{G}_4\oplus A_1\oplus A_1^*)= \chi_{(n)}+3\chi_{(n-1,1)}+\chi_{(n-2,1^2)}+\chi_{(n-3,1^3)}+\chi_{(n-4,1^4)}$  and $l_n(\mathcal{G}_4\oplus A_1\oplus A_1^*)=7$.

    \item[5)] $\chi_n(\mathcal{G}_4\oplus A_4)= \chi_n(\mathcal{G}_4\oplus A_4^*)= \chi_{(n)}+3\chi_{(n-1,1)}+2\chi_{(n-2,1^2)}+\chi_{(n-2,2)}+ \chi_{(n-3,1^3)}+\chi_{(n-4,1^4)}$ and $l_n(\mathcal{G}_4\oplus A_4)=l_n(\mathcal{G}_4\oplus A_4^*)=9$.
\end{enumerate}
\end{lemma}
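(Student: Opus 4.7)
The plan mirrors the proof of Lemma~\ref{idealg4a1}. For each direct sum $A\oplus B$ in the statement, Remark~\ref{comparacaochar} gives the upper bound $\widetilde m_\lambda\le m_\lambda(A)+m_\lambda(B)$, while the containments $A,\,B\in\textnormal{var}(A\oplus B)$ give the lower bounds $m_\lambda(A),\,m_\lambda(B)\le \widetilde m_\lambda$. These two inequalities together pin down every multiplicity except in a small number of critical slots, which are then treated either by a contradiction argument or by exhibiting enough linearly independent highest weight vectors.

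For items (1), (4) and (5), the argument is a direct adaptation of Lemma~\ref{idealg4a1}. In each case, the only slot where the lower bound leaves room is $(n-1,1)$. Assuming toward contradiction that $\widetilde m_{(n-1,1)}$ falls short of the claimed value would force $\chi_n(A\oplus B)=\chi_n(A)$ for a suitable choice of summand $A$, hence $P_n\cap\textnormal{Id}(A\oplus B)=P_n\cap\textnormal{Id}(A)$; this is contradicted by exhibiting an element of $P_n\cap\textnormal{Id}(A)$ that fails to vanish on $B$. Concretely, in item (1) the separating polynomial is $[x_1,x_2]x_3\cdots x_n\in\textnormal{Id}(A_4)\setminus\textnormal{Id}(A_4\oplus A_1^*)$ (an analogous choice handles $A_4^*\oplus A_1$); in items (4) and (5) the separating polynomials are drawn from the generating sets of $\textnormal{Id}(\mathcal{G}_4)$ listed in Lemma~\ref{G2k} and from $\textnormal{Id}(A_4),\,\textnormal{Id}(A_4^*)$ of Lemma~\ref{tideala6}.

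For items (2) and (3), we follow the alternative route indicated immediately before the statement: construct sufficiently many linearly independent highest weight vectors $f_{T_\lambda}$ in $P_n$ that do not vanish on the direct sum. Using the matrix models for $A_2,\,A_4,\,A_4^*,\,A_5,\,A_6$, one evaluates each $f_{T_\lambda}$ according to \cite[Proposition~12.4.14]{Drensky} and checks non-vanishing by placing variables at suitable matrix positions. The main obstacle is item (3): four unordered pairs $B_1\ne B_2$ in $\{A_4,\,A_4^*,\,A_5,\,A_6\}$ must all yield $\widetilde m_{(n-2,2)}=2$, and this equality is delicate because the T-ideal of $A_6$, generated by the six polynomials $f_1,\dots,f_6$ of Lemma~\ref{tideala6}, can induce unexpected linear dependencies among candidate vectors. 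One must choose two tableaux of shape $(n-2,2)$ whose associated highest weight vectors simultaneously survive in $P_n(B_1\oplus B_2)$ for every such pair.

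The colengths are then obtained by summing multiplicities in each decomposition: $1+3+1+1=6$ in (1), $1+3+2+1=7$ in (2), $1+4+2+2=9$ in (3), $1+3+1+1+1=7$ in (4), and $1+3+2+1+1+1=9$ in (5).
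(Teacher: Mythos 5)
The paper gives no explicit proof of this lemma; it merely states that items (1), (4), (5) are ``analogous to Lemma~\ref{idealg4a1}'' and that the remaining direct sums are handled by producing enough linearly independent highest weight vectors, which is exactly the split you adopt. Your treatment of items (1)--(4) is sound: in (1) and (4) the bounds $\max(m_\lambda(A),m_\lambda(B))\le\widetilde m_\lambda\le m_\lambda(A)+m_\lambda(B)$ pin every partition except $(n-1,1)$, and the separating polynomials $[x_1,x_2]x_3\cdots x_n$ and $[x_1,x_2,x_3]x_4\cdots x_n$ give exactly the contradiction used in Lemma~\ref{idealg4a1}; for (2)--(3) the highest weight vector route is the right one.

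However, your claim that in items (1), (4) \emph{and} (5) ``the only slot where the lower bound leaves room is $(n-1,1)$'' is false for item (5), and this is a genuine gap. For $\mathcal{G}_4\oplus A_4$ one has $m_{(n-2,1^2)}(\mathcal{G}_4)=m_{(n-2,1^2)}(A_4)=1$, so Remark~\ref{comparacaochar} only gives $1\le\widetilde m_{(n-2,1^2)}\le 2$, while the claimed value is $2$. With two unresolved slots the contradiction argument breaks down: if $\widetilde m_{(n-1,1)}$ were $2$, the cocharacter of $\mathcal{G}_4\oplus A_4$ would still contain $\chi_{(n-2,2)}$ and $\chi_{(n-3,1^3)}$, so it can coincide with neither $\chi_n(\mathcal{G}_4)$ (which has no $(n-2,2)$) nor $\chi_n(A_4)$ (which has no $(n-3,1^3)$), and no summand forces $P_n\cap\textnormal{Id}(\mathcal{G}_4\oplus A_4)=P_n\cap\textnormal{Id}(\text{summand})$. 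You must instead treat item (5) by exhibiting linearly independent highest weight vectors — e.g.\ $f_1=x_1^{\,n-3}St_3(x_1,x_2,x_3)$ and $f_2=St_3(x_1,x_2,x_3)x_1^{\,n-3}$ give $\widetilde m_{(n-2,1^2)}=2$ (they agree in $\mathcal{G}_4$, so for $\alpha f_1+\beta f_2$ to vanish on $\mathcal{G}_4$ forces $\alpha=-\beta$, and then $f_1-f_2$ is seen to be nonzero on $A_4$), and a similar triple of vectors of shape $(n-1,1)$ is needed to force $\widetilde m_{(n-1,1)}=3$.
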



 \section{\texorpdfstring{Classifying varieties of bounded colength}{Classifying varieties of bounded colength}}

In this section, we classify the varieties whose colength sequence is bounded by 7. We note that the results presented here extend the existing literature, as previous progress in this direction has only established classifications for varieties with colength sequences bounded above by 4.

According to Theorem \ref{teokemer}, the algebras of the type $F+J(A)$ play an important role in the classification of varieties with bounded colength. Recall that if $A=F+J(A)$ is a finite-dimensional algebra then, by \cite[Lemma 9.7.1]{GiambrunoZai},  we can decompose \begin{equation} \label{decompj}
    J(A)=J_{10}+J_{01}+J_{00}+J_{11}, \mbox{ where }
\end{equation}    $$J_{00}=\{j\in J\mid 1_Fj=j1_F=0\},\quad J_{10}=\{j\in J\mid 1_Fj=j 
    \mbox{ and }j1_F=0\}$$
      $$J_{01}=\{j\in J\mid 1_Fj=0 
    \mbox{ and }j1_F=j\}\quad \mbox{ and }\quad J_{11}=\{j\in J\mid 1_Fj=j1_F=j\}.$$ Furthermore, for $i, k,r,s \in \{0, 1\}$, $J_{ik}J_{rs} \subseteq \delta_{kr} J_{is}$, where $\delta_{kr}$ is the Kronecker delta function.
    
We begin by recalling several lemmas from \cite{GLa} and \cite{Daniela}, which we gather in the next two lemmas.

\begin{lemma} \cite[Lemmas 9,10, 15, 20, 21]{GLa} \label{1} Let $A=F+J(A)$ be an $F$-algebra.
\begin{enumerate}
 \item[1)]If $J_{10}\neq \{0\}$ then $A_{1}\in \textnormal{var}(A)$.

  \item[2)]If $J_{01}\neq \{0\}$ then $A_{1}^*\in \textnormal{var}(A)$.

   \item[3)]If $A_{2}\notin \textnormal{var}(A)$ then $[J_{11}, J_{11}] = \{0\}$.

   \item[4)] If $A_4\notin \textnormal{var}(A)$ then $J_{10}J_{00}= \{0\}$.
    \item[5)] If $A_4^*\notin \textnormal{var}(A)$ then $J_{00}J_{01}=\{0\}$.
\item[6)]  If $J_{10}J_{00}=J_{00}J_{01}= \{0\}$ and $J_{01}J_{10}\neq \{0\}$ then $A_5\in \textnormal{var}(A).$

\item[7)] If $J_{10}J_{01}\neq \{0\}$ and $J_{01}J_{10}=\{0\}$ then $A_6\in \textnormal{var}(A).$

\end{enumerate}
\end{lemma}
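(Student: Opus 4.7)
The plan is to handle the seven items separately, all based on the Peirce-type decomposition $J(A) = J_{00} \oplus J_{10} \oplus J_{01} \oplus J_{11}$ from \eqref{decompj} together with the constraint $J_{ik}J_{rs}\subseteq \delta_{kr}J_{is}$. A useful preliminary observation is that this constraint already gives $J_{10}^2 \subseteq \delta_{01}J_{10} = \{0\}$ and symmetrically $J_{01}^2 = \{0\}$, so nonzero elements of $J_{10}$ or $J_{01}$ are automatically square-zero.

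For item (1), I would pick $0 \neq j \in J_{10}$; the multiplication on $F\cdot 1_F + Fj$ is then dictated by $1_F j = j$, $j\cdot 1_F = 0$, and $j^2 = 0$, which is exactly the table of $A_1 = Fe_{11} + Fe_{12}$ under $1_F \leftrightarrow e_{11}$, $j \leftrightarrow e_{12}$, giving a subalgebra of $A$ isomorphic to $A_1$ and hence $A_1 \in \textnormal{var}(A)$. Item (2) is symmetric, using a nonzero $j \in J_{01}$ and the model $A_1^* = Fe_{22} + Fe_{12}$.

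For items (4)--(7), the strategy is uniform: take the nonzero product postulated by the hypothesis (after passing to the contrapositive in (4) and (5)), adjoin $1_F$ and the factors, and read off that the resulting linear span is, possibly modulo a small ideal, isomorphic to the target algebra. Concretely, in (4) I would take $a \in J_{10}$, $b \in J_{00}$ with $ab \neq 0$ and match $(1_F, a, ab, b)$ with $(e_{11}, e_{12}, e_{13}, e_{23})$ inside $A_4$; item (5) is dual. For (6), taking $a \in J_{01}$, $b \in J_{10}$ with $ab \neq 0$, the assumptions $J_{10}J_{00} = J_{00}J_{01} = \{0\}$ together with $J_{10}^2 = J_{01}^2 = \{0\}$ force almost all ``extra'' products ($a^2$, $b^2$, $aba$, $bab$, etc.) to vanish, and the only potentially troublesome element $ba \in J_{11}$ generates a one-dimensional ideal which can be quotiented out to land in $A_5$. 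Item (7) is the cleanest: the hypothesis $J_{01}J_{10} = \{0\}$ kills $ba$ outright, and $F\cdot 1_F + Fa + Fb + F(ab)$ is directly isomorphic to $A_6$.

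The real obstacle is item (3). Here $a, b \in J_{11}$ with $[a,b] \neq 0$, and $J_{11}^2 \subseteq J_{11}$ is no longer automatically zero, so $a^2$, $b^2$, and $ab + ba$ can all be nonzero and must be killed simultaneously without destroying $[a,b]$. My plan is to use the $T$-equivalence $A_2 \sim_T \mathcal{G}_2$ and, inside the nilpotent subalgebra $\langle a, b\rangle \subseteq F + J_{11}$, to search for a quotient exhibiting anti-commuting square-zero generators with nonzero product. Exploiting the nilpotency of $J$ and the fact that iterated products of $a$ and $b$ eventually vanish should allow one to mod out successively by the ideals generated by $a^2$, $b^2$, and the symmetrization $ab + ba$, while still preserving $[a,b]$. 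Verifying that this simultaneous killing can in fact be arranged---equivalently, that the variety generated by the original commutator contains $\mathcal{G}_2$---is the delicate combinatorial step, and is precisely the content of \cite[Lemma 15]{GLa} that the present statement imports.
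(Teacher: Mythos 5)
The paper does not prove this lemma; it is quoted from La Mattina's paper \cite[Lemmas 9, 10, 15, 20, 21]{GLa}, so there is no internal proof to compare against, but your blind reconstruction can still be assessed on its own merits.

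Items (1), (2) and (7) are correct as you describe them, and item (6) is also correct: your observation that $ba \in J_{11}$ spans a two-sided ideal that can be killed to land in $A_5$ is exactly right (and is the same device the paper later uses in its own proof of Lemma~\ref{Lemma:A9}). Items (4) and (5) have a gap of the kind you anticipated but did not close. If $a \in J_{10}$, $b \in J_{00}$ and $ab \neq 0$, the span of $\{1_F, a, ab, b\}$ is in general \emph{not} a subalgebra: $b^2$ and $ab^2$ may be nonzero and escape it, so one cannot simply ``match'' these four elements with $(e_{11}, e_{12}, e_{13}, e_{23})$ in $A_4$, where $e_{23}^2 = 0$. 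One must pass to the quotient of the subalgebra $R = \langle 1_F, a, b\rangle$ by the two-sided ideal generated by $b^2$ and check, using the nilpotency of $J$, that $ab$ survives in the quotient --- which is precisely the pattern of argument the paper spells out in its proof of Lemma~\ref{Lemma:A9}. Your phrase ``possibly modulo a small ideal'' gestures at this, but for (4) and (5) you neither name the ideal nor verify that the crucial product is not absorbed by it.

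Item (3) is the genuine gap, and you are right to flag it, but the attack you sketch is not obviously salvageable: modding the subalgebra $\langle a,b\rangle \subseteq F+J_{11}$ out by the ideal generated by $a^2$, $b^2$ and $ab+ba$ could very well annihilate $ab-ba$ as well, and you offer no mechanism that certifies the commutator survives. A cleaner route, in line with the $GL_m$-machinery the paper itself develops, avoids any quotient. Since $A_2 \sim_T \mathcal{G}_2$ and $\chi_n(\mathcal{G}_2) = \chi_{(n)} + \chi_{(n-1,1)} + \chi_{(n-2,1^2)}$ with all three multiplicities equal to $1$, it suffices to show $m_{(n)}(F+J_{11}) \geq 1$, $m_{(n-1,1)}(F+J_{11}) \geq 1$ and $m_{(n-2,1^2)}(F+J_{11}) \geq 1$. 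The first is immediate since $F+J_{11}$ is unital. For the second, evaluate the highest weight vector $[x_1,x_2]x_1^{\,n-2}$ at $x_1 \mapsto 1_F + a$, $x_2 \mapsto b$; this gives $[a,b](1_F+a)^{n-2}$, which equals $[a,b]$ plus terms lying strictly deeper in the $J$-adic filtration, hence is nonzero. For the third, evaluate $St_3(x_1,x_2,x_3)\,x_1^{\,n-3}$ at $x_1 \mapsto 1_F$, $x_2 \mapsto a$, $x_3 \mapsto b$, which collapses to $[a,b] \neq 0$. This proves item (3) directly and dispenses with the delicate simultaneous-killing step you identify as the obstacle.
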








\begin{lemma} \cite[Lemmas 13 and 16]{Daniela} \label{2}
    Let $A=F+J(A)$ be an $F$-algebra. 
    
    \begin{enumerate}
        \item[1)]     If $A_7\notin \textnormal{var}(A)$ then $[J_{11},J_{11},J_{11}]=\{0\}$.

        \item[2)] If $\mathcal{G}_4, A_7, A_8,A_8^*\notin \textnormal{var}(A)$ and $J_{00}=J_{10}J_{01}=\{0\}$ then $A\in \textnormal{var}(UT_2).$
    \end{enumerate}

\end{lemma}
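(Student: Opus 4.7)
My overall strategy for both parts rests on the Peirce-type decomposition together with the product rule $J_{ik}J_{rs}\subseteq \delta_{kr}J_{is}$, and proceeds by contrapositive: a nontrivial commutator structure in $J$ is forced to carry one of the excluded algebras $\mathcal{G}_4$, $A_7$, $A_8$, $A_8^*$ into $\textnormal{var}(A)$.

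\textbf{Part (1).} Suppose, contrapositively, that $[a,b,c]\neq 0$ for some $a,b,c\in J_{11}$, and aim to show $A_7\in \textnormal{var}(A)$. The template is $A_7=FI_4+Fe_{12}+Fe_{23}+Fe_{34}+Fe_{13}+Fe_{24}+Fe_{14}$ with its familiar three-step triangular multiplication. The plan is to show that $1_F,a,b,c,ab,bc,abc$ span a subalgebra $B\subseteq A$, modulo the ideal of all products of length $\geq 4$ in $\{a,b,c\}$, whose multiplication table matches that of $A_7$ under the assignment $I_4\mapsto 1_F$, $e_{12}\mapsto a$, $e_{23}\mapsto b$, $e_{34}\mapsto c$, $e_{13}\mapsto ab$, $e_{24}\mapsto bc$, $e_{14}\mapsto abc$. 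Since $[a,b,c]=[[a,b],c]\neq 0$ forces the ``top-right corner'' to be nontrivial, after passing to sufficiently many generic copies inside the relatively free algebra of $\textnormal{var}(A)$ to guarantee the required linear independence, one obtains $A_7$ as a subquotient of a direct power of $A$, whence $A_7\in\textnormal{var}(A)$.

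\textbf{Part (2).} The hypotheses $J_{00}=\{0\}$ and $J_{10}J_{01}=\{0\}$ collapse the Peirce product table so that only $J_{11}J_{11}\subseteq J_{11}$, $J_{11}J_{10}\subseteq J_{10}$, and $J_{01}J_{11}\subseteq J_{01}$ remain nonzero. Writing $a_i=\alpha_i+x_i+y_i+z_i$ with $x_i\in J_{11}$, $y_i\in J_{10}$, $z_i\in J_{01}$ and expanding, one obtains
\[
[a_1,a_2]=[x_1,x_2]+(x_1y_2-x_2y_1)+(z_1x_2-z_2x_1),
\]
\[
[a_1,a_2][a_3,a_4]=[x_1,x_2][x_3,x_4]+[x_1,x_2](x_3y_4-x_4y_3)+(z_1x_2-z_2x_1)[x_3,x_4],
\]
and the three summands lie in $J_{11}$, $J_{10}$, $J_{01}$ respectively. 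Hence $A\in \textnormal{var}(UT_2)$ reduces to the identical vanishing of each summand.

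The main obstacle is extracting these three vanishings from the three remaining exclusions, by constructing the forbidden algebra in each putative failure. First, part (1) combined with $A_7\notin\textnormal{var}(A)$ gives $[J_{11},J_{11},J_{11}]=\{0\}$, so the subalgebra $F+J_{11}$ lies in $\textnormal{var}(\mathcal{G})$; if in addition $[x,y][z,w]\not\equiv 0$ there, Theorem \ref{subvarieties}(1) forces $F+J_{11}$ to be PI-equivalent to a direct sum involving some $\mathcal{G}_{2k}$ with $k\geq 2$, producing $\mathcal{G}_4\in\textnormal{var}(A)$ and contradicting the hypothesis; this delivers $[J_{11},J_{11}][J_{11},J_{11}]=\{0\}$. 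Second, a non-vanishing of the middle summand would, by an argument mirroring part (1), allow one to mount a homomorphic copy of $A_8$ inside $A$, with the three $J_{11}$-entries emulating a Grassmann-type block and $y_4$ realizing the extra $J_{10}$-column that distinguishes $A_8$ from $A_7$, contradicting $A_8\notin \textnormal{var}(A)$. A symmetric construction with $J_{01}$ in place of $J_{10}$ handles the third summand via $A_8^*$. Combining these contradictions, every summand vanishes, so $[x_1,x_2][x_3,x_4]\in \textnormal{Id}(A)$ and therefore $A\in \textnormal{var}(UT_2)$.
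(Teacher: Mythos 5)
The lemma is cited from \cite{Daniela} rather than proved in the paper, so I assess your argument on its own. Your part~(2) Peirce-block decomposition of $[a_1,a_2][a_3,a_4]$ into $J_{11}$-, $J_{10}$-, and $J_{01}$-components and the reduction to the three vanishings $[J_{11},J_{11}][J_{11},J_{11}]=[J_{11},J_{11}]J_{10}=J_{01}[J_{11},J_{11}]=\{0\}$ is essentially correct (you drop the scalar contributions $\alpha_i y_j-\alpha_j y_i$ and $\alpha_j z_i-\alpha_i z_j$ coming from the $1_F$-component, but since $[x_1,x_2][x_3,x_4]$ is multilinear one may test separately on $1_F$ and the Peirce pieces, so this omission is harmless). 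Your derivation of the first vanishing — part~(1) gives $[J_{11},J_{11},J_{11}]=\{0\}$, so $F+J_{11}\in\var(\mathcal{G})$, and Theorem~\ref{subvarieties}(1) together with $\mathcal{G}_4\notin\var(A)$ forces $F+J_{11}$ to satisfy $[x_1,x_2][x_3,x_4]$ — is sound, provided one notes that $F+J_{11}$ is finite-dimensional so that Theorem~\ref{subvarieties} applies.

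The genuine gap is in part~(1), and it propagates into the second and third vanishings of part~(2). You propose to realize $A_7$ as a subquotient via $e_{12}\mapsto a$, $e_{23}\mapsto b$, $e_{34}\mapsto c$, $e_{13}\mapsto ab$, $e_{24}\mapsto bc$, $e_{14}\mapsto abc$, killing products of length $\geq 4$. But the multiplication table of $A_7$ also requires $e_{23}e_{12}=e_{12}^2=e_{12}e_{34}=e_{34}e_{12}=e_{34}e_{23}=e_{23}^2=e_{34}^2=0$, i.e.\ several length-two products must vanish, and the ideal of length-$\geq4$ products does not touch them. Nothing in the hypotheses makes $ba$, $a^2$, $ca$, $ac$, $cb$ vanish in $A$, and passing to ``generic copies in the relatively free algebra'' only makes these products \emph{more} linearly independent, not zero — so the seven proposed elements do not span a subquotient with the multiplication table of $A_7$. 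The same defect undermines the sketched constructions of $A_8$ and $A_8^*$. A workable replacement, and the pattern used elsewhere in this paper (cf.\ Lemma~\ref{g4barra}), is a two-sided $T$-ideal comparison rather than a literal embedding: for instance, when $[J_{11},J_{11}]J_{10}\neq\{0\}$, set $B=F1_F+J_{11}+J_{10}$, verify from the Peirce relations and $[J_{11},J_{11},J_{11}]=[J_{11},J_{11}][J_{11},J_{11}]=\{0\}$ that $[B,B,B]B=[B,B][B,B]B=\{0\}$, hence $\textnormal{Id}(A_8)\subseteq\textnormal{Id}(B)$, and then show $\textnormal{Id}(B)\subseteq\textnormal{Id}(A_8)$ by exhibiting evaluations in $B$ (with $a,b\in J_{11}$, $c\in J_{10}$, $[a,b]c\neq 0$) that keep a spanning set of $P_n$ modulo $\textnormal{Id}(A_8)$ linearly independent, yielding $B\sim_T A_8\in\var(A)$, a contradiction. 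Part~(1) likewise needs a linear-independence argument inside $F1_F+J_{11}$ (or the corresponding relatively free algebra), not an isomorphic copy of $A_7$.
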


Now, we prove a sequence of auxiliary results that will support the main result.

\begin{lemma}  \label{g4barra}
    Let $A=F+J(A)$ be an algebra such that $[J_{11},J_{11},J_{11}]=[J_{11},J_{11}][J_{11},J_{11}][J_{11},J_{11}]=\{0\}$. 

\begin{enumerate}
    \item[1)] If $[J_{11},J_{11}][J_{11},J_{11}]J_{10}\neq \{0\}$ then ${\overline{\mathcal{G}_4}}\in \textnormal{var}(A).$ 
     \item[2)] If $J_{01}[J_{11},J_{11}][J_{11},J_{11}]\neq \{0\}$ then ${\overline{\mathcal{G}_4^*}}\in \textnormal{var}(A).$ 
\end{enumerate}
\end{lemma}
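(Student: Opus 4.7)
The plan is to prove part (1); part (2) follows by applying the same argument to the opposite algebra $A^{\rm op}$, which swaps $J_{10}$ with $J_{01}$ and interchanges ${\overline{\mathcal{G}_4}}$ with ${\overline{\mathcal{G}_4^*}}$. Under the hypothesis of (1), fix $a_1, a_2, b_1, b_2 \in J_{11}$ and $c \in J_{10}$ with $u := [a_1, a_2][b_1, b_2]\, c \neq 0$. The strategy is to show that the substitution $x_1, x_2, x_3, x_4, x_5 \mapsto a_1, a_2, b_1, b_2, c$ (with all remaining variables sent to $1_F$) induces a homomorphism from the relatively free algebra $F\langle X\rangle/\textnormal{Id}({\overline{\mathcal{G}_4}})$ into $A$, and then to use the non-vanishing of $u$ to certify that this homomorphism witnesses ${\overline{\mathcal{G}_4}} \in \textnormal{var}(A)$.

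The first step is to verify that the two T-ideal generators of $\textnormal{Id}({\overline{\mathcal{G}_4}})$ from Lemma \ref{g4barra1}, namely $[x_1, x_2, x_3]\, x_4$ and $[x_1, x_2][x_3, x_4][x_5, x_6]\, x_7$, vanish under every substitution of the variables into $S := \{1_F, a_1, a_2, b_1, b_2, c\}$. This proceeds by case analysis on the positions of the $c$'s and $1_F$'s and leverages (i) $[1_F, y] = 0$ for every $y \in J_{11} \cup J_{10}$; (ii) the Peirce-type multiplication rules from \eqref{decompj}, particularly $J_{11} J_{10} \subseteq J_{10}$, $J_{10} J_{11} = 0$, and $J_{10} J_{10} = 0$; (iii) the hypothesis $[J_{11}, J_{11}, J_{11}] = 0$; and (iv) the hypothesis $[J_{11}, J_{11}][J_{11}, J_{11}][J_{11}, J_{11}] = 0$. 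For instance, a triple bracket $[x_1, x_2, x_3]$ in which exactly one substitution lies in $J_{10}$ already produces an element of $J_{10}$, which is then annihilated by right-multiplication by any element of $S$; while a triple of $J_{11}$-commutators vanishes directly by hypothesis (iv).

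The second step is to use the non-vanishing of $u$ to show that the induced map realizes ${\overline{\mathcal{G}_4}}$ inside $\textnormal{var}(A)$. By Lemma \ref{g4barra1}(3), $P_n$ modulo $\textnormal{Id}({\overline{\mathcal{G}_4}})$ admits an explicit basis of four types, and $u$ realizes (up to scalar) the evaluation of the type-4 basis polynomial with $x_l, x_m, x_p, x_q, x_r \mapsto a_1, a_2, b_1, b_2, c$ and the remaining variables sent to $1_F$. The main obstacle is to verify, via analogous substitutions using subcollections of these generators (with the others sent to $1_F$ to kill the extra commutator positions), that all four basis types can be distinguished by evaluations in $A$, thereby forcing the coefficient of each basis polynomial to vanish individually and yielding $\textnormal{Id}(A) \subseteq \textnormal{Id}({\overline{\mathcal{G}_4}})$. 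This combinatorial bookkeeping hinges on the Peirce rules of \eqref{decompj} to control precisely which monomials survive each chosen evaluation.
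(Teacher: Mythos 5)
Your proposal follows essentially the same route as the paper: verify the $\textnormal{T}$-ideal generators of $\textnormal{Id}({\overline{\mathcal{G}_4}})$ vanish using Peirce rules and the commutator hypotheses, then use the explicit basis of Lemma~\ref{g4barra1}(3) together with evaluations built from $[a_1,a_2][b_1,b_2]c\neq 0$, with the opposite-algebra trick handling part~(2). However, there is a scoping gap in Step~1. You check only that $[x_1,x_2,x_3]x_4$ and $[x_1,x_2][x_3,x_4][x_5,x_6]x_7$ vanish under substitutions of the variables by elements of the finite set $S=\{1_F,a_1,a_2,b_1,b_2,c\}$. What Step~2 actually needs is that an \emph{arbitrary} $g\in P_n\cap\textnormal{Id}({\overline{\mathcal{G}_4}})$ vanishes under your chosen evaluations; but a general multilinear consequence of the generators is a sum of terms $m_0\,h(m_1,\ldots,m_k)\,m_{k+1}$ where $h$ is a generator and the $m_j$ are monomials in the $x_i$, so under $x_i\mapsto s_i$ the generators are fed \emph{products} of elements of $S$, not just elements of $S$. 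Hence you need the generators to vanish on the whole subalgebra $\langle S\rangle$, not merely on $S$.

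The paper closes this gap by working directly with $B=F+J_{10}+J_{11}$, which contains $\langle S\rangle$ (using $J_{11}J_{10}\subseteq J_{10}$ and $J_{10}J_{10}=J_{10}J_{11}=\{0\}$). From $[J_{11},J_{11},J_{11}]=\{0\}$ one obtains $[B,B,B]\subseteq J_{10}$, from the sextuple-commutator hypothesis one obtains $[B,B][B,B][B,B]\subseteq J_{10}$, and since $J_{10}B=\{0\}$ both generators vanish identically on $B$, so $\textnormal{Id}({\overline{\mathcal{G}_4}})\subseteq\textnormal{Id}(B)$. Your Step~2 evaluations then take place inside $B$, and the $\textnormal{Id}({\overline{\mathcal{G}_4}})$-part of the decomposition vanishes for free. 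Your Peirce case analysis extends from $S$ to $B$ with no new ideas, so the fix is routine, but as written Step~1 does not deliver what Step~2 requires. A secondary remark: a single substitution inducing a map $F\langle X\rangle/\textnormal{Id}({\overline{\mathcal{G}_4}})\to A$ does not by itself certify ${\overline{\mathcal{G}_4}}\in\textnormal{var}(A)$, since that map has a large kernel and its image may satisfy more identities; what certifies membership is $\textnormal{Id}(A)\subseteq\textnormal{Id}({\overline{\mathcal{G}_4}})$, which is the real aim of your Step~2 and is what the paper obtains by proving $B\sim_T{\overline{\mathcal{G}_4}}$ together with $\textnormal{Id}(A)\subseteq\textnormal{Id}(B)$.
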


\begin{proof}

Since the second item is proved similarly, we focus on the first item.

    Let $A=F+J_{10}+J_{01}+J_{11}+J_{00}$ as given in (\ref{decompj}) and consider $B=F+J_{10}+J_{11}$ the subalgebra of $A$. Since \( [J_{11}, J_{11}, J_{11}] = \{0\} \), we have \( [B, B, B]\subseteq J_{10} \). Therefore, $[x_1,x_2,x_3]x_4\equiv 0$ on $B$.

Moreover, since
\[
[J_{11}, J_{11}] [J_{11}, J_{11}] [J_{11}, J_{11}] = [F, J_{11}] = [F, F] = \{0\},
\] we also have \( [B, B][B, B][B, B]\subseteq J_{10} \) and so $[x_1, x_2][x_3, x_4][x_5, x_6]x_7 \equiv 0$ on $B$. Therefore, by Lemma \ref{g4barra1}, $\textnormal{Id}({\overline{\mathcal{G}_4}})\subseteq \textnormal{Id}(B).$

In order to prove the opposite inclusion, let $f\in P_n\cap \textnormal{Id}(B)$. According to Lemma \ref{g4barra1}, we can write $f$ modulo $\textnormal{Id}({\overline{\mathcal{G}_4}})$ as $$\displaystyle \alpha x_1\cdots x_n+ \underset{\underset{i_1<\cdots <i_{n-2}}{t=1}}{\overset{{n-1}}{\sum}} \beta_t x_{i_1}\cdots x_{i_{n-2}}[x_n,x_t]+\underset{\underset{i_1<\cdots <i_{n-3} }{i<j,\,k}}{\sum} \gamma_{i,j,k} x_{i_1}\cdots x_{i_{n-3}}[x_i,x_j]x_k+$$ $$ \underset{\underset{i_1<\cdots <i_{n-5}}{l<m<p<q}}{\sum} \delta_{l,m}^{p,q,r} x_{i_1}\cdots x_{i_{n-5}}[x_{l},x_m][x_p,x_q]x_r.$$

Since $[J_{11},J_{11}][J_{11},J_{11}]J_{10}\neq \{0\}$, there exist $a,b,c,d\in J_{11}$ and $e\in J_{10}$ such that $[a,b][c,d]e\neq 0$. First, considering the evaluation $x_v\mapsto 1_F$, for all $v\in \{1,\ldots , n\}$, we have $\alpha=0$. Second, we fix $1\leq t\leq n-1$ and considering the evaluation $x_t \mapsto e$ and $x_i\mapsto 1_F$, for all $i\neq t$, we obtain $\beta_t=0$. Third, we fix $i,j,k$, with $i<j$, the evaluation $x_{i}\mapsto c,$ $x_{j}\mapsto d,$ $x_{k}\mapsto e$ and $x_{t}\mapsto 1_F,$ for all $t\notin \{i,j,k\}$, gives $\gamma_{i,j,k}=0$. Finally, we fix $l,m,p,q,r$, with $l<m<p<q$, the evaluation $x_{l}\mapsto a,$ $x_{m}\mapsto b,$ $x_{p}\mapsto c$, $x_{q}\mapsto d$ and $x_{r}\mapsto a$ and $x_{v}\mapsto 1_F,$ for all $v\notin \{l,m,p,q,r\}$, gives $\delta_{l,m}^{p,q,r}=0$.

Therefore, we have proved that $f\in \textnormal{Id}({\overline{\mathcal{G}_4}})$ and then $ {\overline{\mathcal{G}_4}}\sim_T B \in \textnormal{var}(A).$

\end{proof}

Consider $$\mathcal{I}_1=\{\mathcal{G}_4\oplus A_1\oplus A_1^*,\mathcal{G}_4\oplus A_4, \mathcal{G}_4\oplus A_4^*, A_7, \mathcal{G}_{6}, {\overline{\mathcal{G}_4}}, {\overline{\mathcal{G}_4^*}} \}.$$

\begin{lemma} \label{lemmag4}  Let $A=F+J(A)$ such that $\mathcal{G}_4\in \textnormal{var}(A)$. If $Q\notin \textnormal{var}(A)$, for all $Q\in \mathcal{I}_1$, then $A\sim_T B
\oplus N$, where $B\in \{\mathcal{G}_4,\mathcal{G}_4 \oplus A_1, \mathcal{G}_4 \oplus A_1^* \}$ and $N$ is a nilpotent algebra.
\end{lemma}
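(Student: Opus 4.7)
The plan is to dissect the decomposition (\ref{decompj}) of $J(A)$ using the seven non-embedding hypotheses in $\mathcal{I}_1$. First, $A_7 \notin \textnormal{var}(A)$ gives $[J_{11},J_{11},J_{11}] = 0$ by Lemma \ref{2}(1). Since $\textnormal{var}(A)$ is closed under finite direct sums and $\mathcal{G}_4 \in \textnormal{var}(A)$, the conditions $\mathcal{G}_4 \oplus A_4,\ \mathcal{G}_4 \oplus A_4^*,\ \mathcal{G}_4 \oplus A_1 \oplus A_1^* \notin \textnormal{var}(A)$ force $A_4 \notin \textnormal{var}(A)$ and $A_4^* \notin \textnormal{var}(A)$ (hence $J_{10}J_{00} = J_{00}J_{01} = 0$ by Lemma \ref{1}), and moreover that at least one of $A_1, A_1^*$ lies outside $\textnormal{var}(A)$. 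Exchanging $\overline{\mathcal{G}_4}$ with $\overline{\mathcal{G}_4^*}$ and $J_{10}$ with $J_{01}$, I may assume $J_{01} = 0$; the opposite case will produce $B = \mathcal{G}_4 \oplus A_1^*$.

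Next, the unital subalgebra $B := F + J_{11}$ satisfies $[x_1,x_2,x_3] \equiv 0$ (since $[F,J_{11}] = 0$ and $[J_{11},J_{11},J_{11}] = 0$), so $B \in \textnormal{var}(\mathcal{G})$. Combining $\mathcal{G}_6 \notin \textnormal{var}(A) \supseteq \textnormal{var}(B)$ with the classification in Theorem \ref{subvarieties}(1), every summand in the $T$-decomposition of $B$ can be forced into $\textnormal{var}(\mathcal{G}_4)$, giving $[J_{11},J_{11}]^3 = 0$. With this in hand, Lemma \ref{g4barra}(1) applied to $\overline{\mathcal{G}_4} \notin \textnormal{var}(A)$ yields $[J_{11},J_{11}]^2 J_{10} = 0$. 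The multiplication rule $J_{ik}J_{rs} \subseteq \delta_{kr}J_{is}$ together with $J_{10}J_{00} = 0$ now makes $J_{00}$ a two-sided ideal annihilated by $D := F + J_{11} + J_{10}$ from both sides, producing an algebra direct sum $A = D \oplus N$ with $N := J_{00}$ nilpotent.

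It remains to identify $D$ up to $T$-equivalence. Using $[F,J_{11}] = 0$, $[F,J_{10}] = J_{10}$, $J_{10}J_{11} = 0$ and $J_{10}^2 = 0$, a direct computation gives $[D,D,D] \subseteq J_{10}$ and $[D,D]^3 \subseteq [J_{11},J_{11}]^3 + [J_{11},J_{11}]^2 J_{10} = 0$, so $D$ satisfies both generators of $\textnormal{Id}(\mathcal{G}_4 \oplus A_1)$ from Lemma \ref{idealg4a1}(2), whence $D \in \textnormal{var}(\mathcal{G}_4 \oplus A_1)$. For the reverse inclusion, any $0 \neq j \in J_{10}$ yields $F + Fj \cong A_1$ as a subalgebra of $D$ (since $1_F j = j$, $j 1_F = 0$, $j^2 = 0$), so $A_1 \in \textnormal{var}(D)$; and $\mathcal{G}_4 \in \textnormal{var}(A)$ descends to $\mathcal{G}_4 \in \textnormal{var}(D)$ via the standard observation that a nilpotent orthogonal summand is invisible to unital members of the generated variety. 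Together these yield $D \sim_T \mathcal{G}_4 \oplus A_1$ when $J_{10} \neq 0$, and $D \sim_T \mathcal{G}_4$ when $J_{10} = 0$. The main obstacle I anticipate is the passage from $\mathcal{G}_6 \notin \textnormal{var}(A)$ to $[J_{11},J_{11}]^3 = 0$: since the decomposition of $B$ furnished by Theorem \ref{subvarieties}(1) is only up to $T$-equivalence, one must rule out the possibility that a nilpotent summand harbours a nonzero triple commutator product. The cleanest remedy is to work with the finitely generated subalgebra witnessing $[J_{11},J_{11}]^3 \neq 0$ and exploit the prime-like behaviour of $\textnormal{Id}(\mathcal{G}_{2k})$ under variety joins.
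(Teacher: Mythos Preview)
Your argument is essentially the paper's proof: both use $A_7\notin\textnormal{var}(A)$ with Lemma~\ref{2}(1) to get $[J_{11},J_{11},J_{11}]=0$, then Theorem~\ref{subvarieties}(1) together with $\mathcal{G}_6\notin\textnormal{var}(A)$ to force $F+J_{11}$ into $\textnormal{var}(\mathcal{G}_4)$, then Lemma~\ref{g4barra} for $[J_{11},J_{11}]^2J_{10}=0$, and finally verify that $D=F+J_{11}+J_{10}$ satisfies the two generators of $\textnormal{Id}(\mathcal{G}_4\oplus A_1)$ from Lemma~\ref{idealg4a1}(2). The paper carries out the last step via a basis-evaluation argument; your direct containments $[D,D,D]\subseteq J_{10}$ and $[D,D]^3\subseteq[J_{11},J_{11}]^3+[J_{11},J_{11}]^2J_{10}$ are the same computation rephrased.

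The obstacle you anticipate at the end is not real, and the remedy you sketch is unnecessary. Once $F+J_{11}\sim_T B_1\oplus N$ with $B_1\in\{F,A_2,\mathcal{G}_4\}$ and $N$ nilpotent, apply the very ``standard observation'' you already use to the unital algebra $F+J_{11}$ itself: since $F+J_{11}\in\textnormal{var}(F+J_{11})=\textnormal{var}(B_1\oplus N)$ and $F+J_{11}$ has a unit, it follows that $F+J_{11}\in\textnormal{var}(B_1)\subseteq\textnormal{var}(\mathcal{G}_4)$, whence $[x_1,x_2][x_3,x_4][x_5,x_6]\equiv 0$ on $F+J_{11}$ and $[J_{11},J_{11}]^3=0$ on the nose. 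The paper disposes of this with the single clause ``in all such cases we obtain $[J_{11},J_{11}][J_{11},J_{11}][J_{11},J_{11}]=\{0\}$''; your instinct to pause there is reasonable, but no detour through finitely generated subalgebras or prime-like behaviour of $\textnormal{Id}(\mathcal{G}_{2k})$ is needed.
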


\begin{proof}
    As long as \( A_7 \notin \textnormal{var}(A) \), by Lemma~\ref{2}(1), we have $[J_{11}, J_{11}, J_{11}] = \{0\}$. Therefore, \( F + J_{11} \in \textnormal{var}(\mathcal{G}) \). Since \( \mathcal{G}_6 \notin \textnormal{var}(A) \), by Theorem~\ref{subvarieties}, it follows that $F + J_{11} \sim_T B_1 \oplus N,$ where \( B_1 \in \{F, A_2, \mathcal{G}_4\} \) and \( N \) is a nilpotent algebra. In all such cases, we obtain
\[
[J_{11}, J_{11}] [J_{11}, J_{11}] [J_{11}, J_{11}] = \{0\}.
\]

Now, assume that \( J_{10} = J_{01} = \{0\} \). Then $A = (F + J_{11}) \oplus J_{00},$ and since \( \mathcal{G}_4 \in \textnormal{var}(A) \), it follows that
\[
A \sim_T \mathcal{G}_4 \oplus N.
\]

Suppose now that \( J_{10} \neq \{0\} \). According to Lemma~\ref{1}(1), we have \( A_1 \in \textnormal{var}(A) \), and therefore \( \mathcal{G}_4 \oplus A_1 \in \textnormal{var}(A) \). Since $\mathcal{G}_4 \oplus A_1 \oplus A_1^* \notin \textnormal{var}(A),$ we conclude that \( A_1^* \notin \textnormal{var}(A) \), and hence Lemma~\ref{1}(2) implies \( J_{01} = \{0\} \). Furthermore, as \( \mathcal{G}_4 \oplus A_4 \notin \textnormal{var}(A) \), we obtain \( A_4 \notin \textnormal{var}(A) \), and once again Lemma~\ref{1}(4) yields
$J_{10} J_{00} = \{0\}.$ Thus, we can write \( A = B \oplus J_{00} \), where \( B = F + J_{10} + J_{11} \).

Let us fix a basis of \( B \) given by the union of bases of \( F \), \( J_{10} \) and \( J_{11} \). Since \( [J_{11}, J_{11}, J_{11}] = \{0\} \), any nonzero evaluation of the polynomial \( [x_1, x_2, x_3] \) on basis elements of \( B \) necessarily requires at least one variable to be evaluated in $J_{10}$. Therefore, $[B,B,B]\subseteq J_{10}$ and so $[x_1,x_2,x_3]x_4\equiv 0$ on $B$.

Moreover, since
\[
[J_{11}, J_{11}] [J_{11}, J_{11}] [J_{11}, J_{11}] = [F, J_{11}] = [F, F] = \{0\},
\]
a nonzero evaluation of the polynomial \( [x_1, x_2][x_3, x_4][x_5, x_6] \) on the basis of \( B \) must occur such that \( x_1, x_2, x_3, x_4 \) are evaluated in \( J_{11} \), and among the variables \( x_5 \) and \( x_6 \), one is evaluated in \( F + J_{11} \) and the other in \( J_{10} \). In this case, we have $[B,B][B,B][B,B]\subseteq [J_{11},J_{11}][J_{11},J_{11}]J_{10}$. Since \( {\overline{\mathcal{G}_4}} \notin \textnormal{var}(A) \), by Lemma~\ref{g4barra}, we have
\[
[J_{11}, J_{11}] [J_{11}, J_{11}] J_{10} = \{0\}.
\]
Therefore, $[x_1, x_2][x_3, x_4][x_5, x_6] \equiv 0$ on $B$.

By Lemma~\ref{idealg4a1}, we conclude that \( B \in \textnormal{var}(\mathcal{G}_4 \oplus A_1) \) and hence $B \sim_T \mathcal{G}_4 \oplus A_1.$ Since \( A \sim_T B \oplus J_{00} \), the result follows.

Similarly, if $J_{01}\neq \{0\}$, a similar argument proves that, if $\mathcal{G}_4\oplus A_1\oplus A_1^*,\mathcal{G}_4\oplus A_4^*, {\overline{\mathcal{G}_4^*}}\notin \textnormal{var}(A)$ then $A\sim_T B\oplus N$, where $B=F+J_{01}+J_{11}\sim_T \mathcal{G}_4\oplus A_1^*.$

\end{proof}

\begin{lemma} \label{Lemma:A9}
    Let $A=F+J(A)$ be an algebra.
    
    \begin{enumerate}
        \item[1)] If $A_9\notin \textnormal{var}(A)$ then $ab^2= 0$, for all $a\in J_{10}$ and $b\in J_{00}$.

        \item[2)] If $A_9^*\notin \textnormal{var}(A)$ then $b^2a= 0$, for all $a\in J_{01}$ and $b\in J_{00}$.
    \end{enumerate}
\end{lemma}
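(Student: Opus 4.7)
The plan is to prove (1) by contrapositive. Suppose there exist $a \in J_{10}$ and $b \in J_{00}$ with $ab^{2} \ne 0$; the goal is to exhibit a surjective algebra homomorphism from a subalgebra $B \subseteq A$ onto $A_{9}$, so that $\textnormal{Id}(A) \subseteq \textnormal{Id}(B) \subseteq \textnormal{Id}(A_{9})$, and hence $A_{9} \in \textnormal{var}(A)$.

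Let $B$ be the subalgebra of $A$ generated by $1_{F}$, $a$ and $b$. Using the Peirce rules $J_{10}^{2} = J_{00} J_{10} = 0$, $J_{10} J_{00} \subseteq J_{10}$ and $J_{00}^{2} \subseteq J_{00}$, together with the action of $1_{F}$ on each Peirce component, $B$ is spanned by $\{1_{F}\} \cup \{ab^{k} : k \geq 0\} \cup \{b^{k} : k \geq 1\}$. Define $\phi \colon B \to A_{9}$ by
\[
\phi(1_{F}) = e_{11}, \qquad \phi(a) = e_{12}, \qquad \phi(b) = e_{23} + e_{34},
\]
and extend multiplicatively. A direct computation yields $\phi(ab) = e_{13}$, $\phi(b^{2}) = e_{24}$, $\phi(ab^{2}) = e_{14}$, while $\phi(ab^{k}) = \phi(b^{k}) = 0$ for $k \geq 3$, since $(e_{23} + e_{34})^{3} = 0$.

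The crux of the proof is to check that $\phi$ is well-defined as an algebra homomorphism. Because the spanning elements of $B$ split across the Peirce components, every $F$-linear relation in $B$ decomposes into relations within $F \cdot 1_{F}$, within $J_{10}$ and within $J_{00}$; the first is trivial. For the second, it suffices to show that $\{a, ab, ab^{2}\}$ are $F$-linearly independent. Given a dependence $\beta_{0} a + \beta_{1} ab + \beta_{2} ab^{2} = 0$, pick the largest integer $N \geq 2$ with $ab^{N} \ne 0$ (which exists by nilpotency of $J(A)$); multiplying on the right by $b^{N}$ gives $\beta_{0} ab^{N} = 0$, forcing $\beta_{0} = 0$, and iterating yields $\beta_{1} = \beta_{2} = 0$. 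An analogous argument handles the independence of $\{b, b^{2}\}$. Any remaining relations in $B$ involve only $ab^{k}$ or $b^{k}$ for $k \geq 3$, and these lie in the kernel of $\phi$ by construction. Surjectivity of $\phi$ is immediate, since $\{e_{11}, e_{12}, e_{13}, e_{14}, e_{23} + e_{34}, e_{24}\}$ spans $A_{9}$ and lies in the image. Therefore $A_{9} \in \textnormal{var}(B) \subseteq \textnormal{var}(A)$.

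Part (2) follows by the dual construction: given $a \in J_{01}$, $b \in J_{00}$ with $b^{2} a \neq 0$, let $B'$ be the subalgebra of $A$ generated by $1_{F}, a, b$ and define $\phi' \colon B' \to A_{9}^{*}$ by $\phi'(1_{F}) = e_{44}$, $\phi'(a) = e_{34}$, $\phi'(b) = e_{12} + e_{23}$; then $\phi'(ba) = e_{24}$, $\phi'(b^{2} a) = e_{14}$, and the same line of reasoning delivers a surjective homomorphism, whence $A_{9}^{*} \in \textnormal{var}(A)$. The main obstacle throughout is the well-definedness of the constructed map, which hinges on the nilpotency of $J(A)$ combined with the non-vanishing hypothesis.
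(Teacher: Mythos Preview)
Your approach is essentially the same as the paper's: both take the subalgebra generated by $1_F, a, b$ and produce $A_9$ as a quotient (the paper mods out by the ideal generated by $b^3$, you construct the surjection directly). One small logical slip: showing $\{a, ab, ab^2\}$ linearly independent does \emph{not} by itself imply that in an arbitrary relation $\sum_{k\geq 0}\beta_k\, ab^k = 0$ the coefficients $\beta_0,\beta_1,\beta_2$ vanish, which is what well-definedness of $\phi$ actually requires---but your multiply-by-$b^N$ trick applied to the full relation (rather than just the three-term one) yields exactly this, so the fix is immediate.
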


\begin{proof}
  Assume that there exist $a \in J_{10}$ and $b \in J_{00}$ such that $ab^{2} \neq 0$. 
Let $R$ be the subalgebra of $A$ generated by $1,\, a$ and $b$, and let $I$ denote the $\textnormal{T}$-ideal of $R$ generated by $b^{3}$. 
It is straightforward to verify that $I$ is linearly spanned by the elements $b^{k}$ and $ab^{k}$ for all $k \geq 3$.

We claim that $ab^{2} \notin I$. 
Indeed, suppose by contradiction that $ab^{2} \in I$. 
Then we can write 
\[
    ab^{2} = \sum_{k \geq 3} \alpha_{k} ab^{k}, \qquad \alpha_k \in F.
\]
Let $i$ be the smallest positive integer such that $ab^{i} \neq 0$. 
Multiplying the above equality on the right by $b^{i-2}$ we obtain 
\[
    ab^{i} = \sum_{k \geq 3} \alpha_{k} ab^{k+i-2} = 0,
\]
which contradicts the minimality of $i$. 
Hence $ab^{2} \notin I$. Consequently, we have $a, b, ab, b^{2} \notin I$. 
Therefore, the quotient algebra $R/I$ is linearly generated by the elements $ \overline{1}, \ \overline{a}, \ \overline{b}, \ \overline{ab}, \ \overline{ab^{2}}$ and $ \overline{b^{2}}.$ It is straightforward to check that the map $\varphi \colon R/I \longrightarrow A_{9}$ given by $$
    \overline{1} \mapsto e_{11}, \quad 
    \overline{a} \mapsto e_{12}, \quad 
    \overline{b} \mapsto e_{23} + e_{34}, \quad  \overline{b^2} \mapsto e_{24}, \quad 
    \overline{ab} \mapsto e_{13}, \quad
    \overline{ab^{2}} \mapsto e_{14}$$ is an isomorphism. 
Thus, $A_{9} \cong R/I \in \mathrm{var}(R) \subseteq \mathrm{var}(A)$.

\end{proof}

\begin{lemma} \label{j10j00}
     Let $A=F+J(A)$ be an algebra.      
     \begin{enumerate}
         \item[1)]  If $A_9, A_{10}\notin \textnormal{var}(A)$ then $J_{10}J_{00}^2= \{0\}$. 

         \item[2)]  If $A_9^*, A_{10}^*\notin \textnormal{var}(A)$ then $J_{00}^2J_{01}= \{0\}$. 
     \end{enumerate}

\end{lemma}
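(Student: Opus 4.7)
The plan is to prove item (1) by contradiction; item (2) follows by the same argument with left and right reversed. Suppose there exist $a \in J_{10}$ and $b_1, b_2 \in J_{00}$ with $ab_1 b_2 \neq 0$. The goal is to exhibit $A_{10}$ as a quotient of a subalgebra of $A$, which would contradict $A_{10} \notin \var(A)$.

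Since $A_9 \notin \var(A)$, Lemma \ref{Lemma:A9} gives $xy^2 = 0$ for every $x \in J_{10}$ and $y \in J_{00}$. Polarizing in $y$ produces $xb_1 b_2 + x b_2 b_1 = 0$, valid not only for $x = a$ but also for $x = av$ with $v$ in the subalgebra $V \subseteq J_{00}$ generated by $b_1, b_2$ (since $aV \subseteq J_{10} J_{00} \subseteq J_{10}$). Consider the unital subalgebra $R$ of $A$ generated by $\{a, b_1, b_2\}$; the Peirce relations $J_{10} J_{10} = J_{00} J_{10} = \{0\}$ give the direct-sum decomposition $R = F + Fa + V + aV$.

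Now let $I$ be the two-sided ideal of $R$ generated by $\{b_1^2,\, b_2^2,\, b_1 b_2 + b_2 b_1\}$. The key step is to verify that $aI = \{0\}$, which follows because $a$ annihilates each generator by the polarized identities above, and this propagates through arbitrary products $rgr'$ in the ideal (using $a^2 = 0$ and $Va = \{0\}$). This yields $I \subseteq V$ and, in particular, $I \cap aV = \{0\}$. From this, one checks that the eight classes $\overline{1},\, \overline{a},\, \overline{b_1},\, \overline{b_2},\, \overline{b_1 b_2},\, \overline{ab_1},\, \overline{ab_2},\, \overline{ab_1 b_2}$ form a basis of $R/I$: spanning is automatic because all products of the $b_i$'s of degree at least three vanish modulo $I$, while linear independence reduces---via left multiplication by $a$ and use of $aI = \{0\}$---to showing that $a,\, ab_1,\, ab_2,\, ab_1 b_2$ are linearly independent in $aV$, which in turn follows by right-multiplying a putative relation by $b_1$ and by $b_2$ and invoking the hypothesis $ab_1 b_2 \neq 0$.

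To conclude, define $\varphi: R/I \to A_{10}$ by $\overline{1} \mapsto e_{11}$, $\overline{a} \mapsto e_{12}$, $\overline{b_1} \mapsto e_{23}+e_{45}$ and $\overline{b_2} \mapsto e_{24}-e_{35}$; a direct comparison of the multiplication tables shows that $\varphi$ is an algebra isomorphism. Hence $A_{10} \in \var(R) \subseteq \var(A)$, contradicting the hypothesis, and therefore $J_{10} J_{00}^2 = \{0\}$. The principal technical difficulty is the verification that $aI = \{0\}$: without it, the class of $ab_1 b_2$ could collapse in the quotient, ruining both the dimension count and the isomorphism with $A_{10}$.
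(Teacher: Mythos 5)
Your proof is correct and follows essentially the same route as the paper's: invoke Lemma~\ref{Lemma:A9} (via $A_9\notin\textnormal{var}(A)$) to obtain $ab_1^2=ab_2^2=a(b_1b_2+b_2b_1)=0$, then quotient the unital subalgebra generated by $a,b_1,b_2$ by the two-sided ideal $I$ generated by $b_1^2,\,b_2^2,\,b_1b_2+b_2b_1$, and identify the quotient with $A_{10}$. Your explicit verification that $aI=\{0\}$ (applying Lemma~\ref{Lemma:A9} to every $av\in J_{10}$, not only to $a$) is precisely what underlies the paper's unproved claim that $I\subseteq J_{00}$, so it is a welcome clarification of the same argument rather than a different one.
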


\begin{proof}
    Suppose that there exist $a\in J_{10}$ and $b,c\in J_{00}$ such that $abc\neq 0$. Since $A_9\notin \textnormal{var}(A)$, by Lemma \ref{Lemma:A9}, we have $ab^2=ac^2=a(b+c)^2=0$ and so $abc+acb=0$. Let $R$ be the subalgebra of $A$ generated by $1,a$ and $b$ and consider $I$ the $\textnormal{T}$-ideal of $R$ generated by $b^2,c^2$ and $bc+cb$.

Since $I\subseteq J_{00}$ then $a, abc,\, ab, ac \notin I$. Therefore, we also have $b,c, bc, cb\notin I$. Now, we observe that the quotient $R/I$ is linearly generated by $$\overline{1},\, \overline{a},\, \overline{b},\, \overline{c},\,\overline{ab},\,\overline{ac},\,\overline{abc},\, \overline{bc},\, \overline{cb}$$ satisfying $\overline{b}^2=\overline{c}^2=\overline{ab^2}=\overline{ac^2}=\overline{bc}+\overline{cb}=\overline{abc}+\overline{acb}=0$. Therefore, $R/I\cong A_{10}$ through the isomorphism given by $$\overline{1}\mapsto e_{11},\quad  \overline{a}\mapsto e_{12},\quad \overline{b}\mapsto e_{23}-e_{45}\quad \mbox{and}\quad  \overline{c}\mapsto e_{24}+e_{35}.$$

\end{proof}
\black

Consider the following sets
$$\mathcal{I}_2=\{A_4\oplus A_2, A_4\oplus A_4^*, A_4\oplus A_5, A_4\oplus A_6, A_9,A_{10}\},$$
$$\mathcal{I}_3=\{A_4^*\oplus A_2,A_4^*\oplus A_4,A_4^*\oplus A_5, A_4^*\oplus A_6, A_9^*,A_{10}^*\}.$$

\begin{lemma} \label{lemmaa4}
    Let $A=F+J$ be an algebra. 

\begin{enumerate}
    \item[1)] If $J_{10}J_{00}\neq \{0\}$ and $Q\notin \textnormal{var}(A)$, for all $Q\in \mathcal{I}_2$, then either $A\sim_T A_4\oplus N$ or $A\sim_T A_4\oplus A_1^*\oplus N$.

    \item[2)] If $J_{00}J_{01}\neq \{0\}$ and $Q\notin \textnormal{var}(A)$, for all $Q\in \mathcal{I}_3$, then either $A\sim_T A_4^*\oplus N$ or $A\sim_T  A_4^*\oplus A_1\oplus N$.
\end{enumerate}    
\end{lemma}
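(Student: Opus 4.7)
I would prove item (1); item (2) follows by a symmetric argument obtained by interchanging the roles of $J_{10}$ and $J_{01}$.

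The first step is to extract the structural constraints on the Peirce components implied by the hypotheses. Because $J_{10}J_{00}\neq\{0\}$, Lemma \ref{1}(4) gives $A_4\in\textnormal{var}(A)$, and since $A_4\oplus A_2,\,A_4\oplus A_4^*,\,A_4\oplus A_5,\,A_4\oplus A_6\notin\textnormal{var}(A)$, the algebras $A_2, A_4^*, A_5, A_6$ all lie outside $\textnormal{var}(A)$. Applying Lemmas \ref{1}(3), \ref{1}(5), \ref{Lemma:A9}, and \ref{j10j00} yields
\[
[J_{11},J_{11}]=J_{00}J_{01}=J_{10}J_{00}^2=\{0\},\qquad ab^2=0 \text{ for all } a\in J_{10},\,b\in J_{00},
\]
while from $A_6\notin\textnormal{var}(A)$ combined with the contrapositive of Lemma \ref{1}(7), if $J_{10}J_{01}\neq\{0\}$ then $J_{01}J_{10}\neq\{0\}$.

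The next step is a case split on $J_{01}$. If $J_{01}=\{0\}$, then $A=F+J_{10}+J_{11}+J_{00}$, and the goal is to establish $A\sim_T A_4\oplus N$ for a suitable nilpotent ideal $N\subseteq A$. Taking $N$ to be an appropriate nilpotent subalgebra, the inclusion $\textnormal{Id}(A)\subseteq\textnormal{Id}(A_4\oplus N)$ follows from $A_4,N\in\textnormal{var}(A)$; the reverse inclusion is the substantive content, and I would establish it via a Poincar\'e--Birkhoff--Witt style reduction of multilinear monomials in the spirit of Lemma \ref{idealg4a1}, exploiting the commutativity of $F+J_{11}$, the Peirce annihilations $J_{10}J_{11}=J_{11}J_{00}=J_{00}J_{11}=J_{00}J_{10}=J_{10}J_{10}=\{0\}$, and the depth bound $J_{10}J_{00}^2=\{0\}$ on the action of $J_{00}$ on $J_{10}$. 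In the second case, $J_{01}\neq\{0\}$, Lemma \ref{1}(2) supplies $A_1^*\in\textnormal{var}(A)$, and hence $A_4\oplus A_1^*\in\textnormal{var}(A)$. The identity $x_1[x_2,x_3]\equiv 0$ of $A_1^*$ is added to the normal-form reduction, and the relation $J_{00}J_{01}=\{0\}$ together with the dichotomy on $J_{10}J_{01}$ controls the interactions of $J_{01}$ with the remaining components. The same scheme then yields $A\sim_T A_4\oplus A_1^*\oplus N$.

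The main obstacle is the explicit normal-form reduction in the presence of the non-trivial action $J_{10}\cdot J_{00}\subseteq J_{10}$. In the analogous Lemma \ref{lemmag4}, the assumption $J_{10}J_{00}=\{0\}$ permits a clean algebra decomposition $A=B\oplus J_{00}$ that reduces the problem to a subalgebra analysis; here no such decomposition is available, so mixed monomials involving iterated products across $J_{10}$, $J_{11}$, and $J_{00}$ (and $J_{01}$ in the second case) must be tracked directly. I would use the weak annihilation $ab^2=0$ (with $a\in J_{10}$, $b\in J_{00}$) to cap the depth of $J_{00}$-iterations, the commutativity of $F+J_{11}$ to linearize $J_{11}$-contributions, and the $J_{01}$-relations to absorb residual terms, with the aim of matching the surviving contributions term by term to those of $A_4\oplus N$ (respectively $A_4\oplus A_1^*\oplus N$). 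This bookkeeping is closely analogous to, but more delicate than, the basis constructions performed in Lemmas \ref{g4barra1} and \ref{idealg4a1}.
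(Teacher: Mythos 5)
Your structural setup largely tracks the paper: you extract $[J_{11},J_{11}]=J_{00}J_{01}=J_{10}J_{00}^2=\{0\}$, you split on whether $J_{01}=\{0\}$, and you correctly identify that in the case $J_{01}\neq\{0\}$ one has $A_1^*\in\textnormal{var}(A)$. However, there is a genuine gap in the heart of the argument, in the case $J_{01}\neq\{0\}$.

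You record only the implication coming from $A_6\notin\textnormal{var}(A)$ via Lemma~\ref{1}(7), namely ``$J_{10}J_{01}\neq\{0\}\Rightarrow J_{01}J_{10}\neq\{0\}$,'' and then say this ``controls the interactions.'' That is not enough. For the reduction to $A_4\oplus A_1^*\oplus N$ to go through one actually needs both $J_{01}J_{10}=\{0\}$ and $J_{10}J_{01}=\{0\}$, so that $J_{01}$ decouples entirely from $J_{10}$ and $J_{00}$; nothing in your argument rules out $J_{01}J_{10}\neq\{0\}$, and Lemma~\ref{1}(6) cannot be applied directly to $A$ because the hypothesis $J_{10}J_{00}\neq\{0\}$ is precisely what that lemma forbids. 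The paper closes this with a quotient construction that is the key new idea here: assuming $J_{01}J_{10}\neq\{0\}$, set $I=J_{10}J_{00}+J_{01}J_{10}J_{00}$, an ideal of $A$; check that $J_{01}J_{10}\not\subseteq I$ (using $J_{01}J_{10}J_{00}^2\subseteq J_{01}(J_{10}J_{00}^2)=\{0\}$); then in $R=A/I$ one has $J(R)_{10}J(R)_{00}=J(R)_{00}J(R)_{01}=\{0\}$ and $J(R)_{01}J(R)_{10}\neq\{0\}$, so Lemma~\ref{1}(6) yields $A_5\in\textnormal{var}(R)\subseteq\textnormal{var}(A)$, hence $A_4\oplus A_5\in\textnormal{var}(A)$, a contradiction. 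Only then does $J_{01}J_{10}=\{0\}$ hold, after which Lemma~\ref{1}(7) gives $J_{10}J_{01}=\{0\}$.

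Separately, your ``PBW-style normal-form reduction'' is a placeholder rather than an argument, and it is not what the paper does. With all the annihilation relations in hand, the paper proves $\textnormal{Id}(A_4)\cap\textnormal{Id}(A_1^*)\cap\textnormal{Id}(J_{00})\subseteq\textnormal{Id}(A)$ directly: a nonzero evaluation of a multilinear $f$ on a Peirce basis of $A$ must lie entirely in $B_1=F+J_{11}+J_{01}$ or entirely in $B_2=F+J_{11}+J_{10}+J_{00}$; on $B_1$ one uses that $f$ is a consequence of $x_1[x_2,x_3]$ (which vanishes on $B_1$), and on $B_2$ one uses that $f$ is a consequence of $[x_1,x_2]x_3x_4$ together with $J_{10}J_{00}^2=\{0\}$. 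Your basis-construction route would in effect have to rediscover this, but the quotient step above is the genuinely missing idea, and without it the normal form you aim for cannot be reached.
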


\begin{proof}
    Since the proof of the second item follows similarly, we focus on the first item.

 Assume that $J_{10}J_{00}\neq \{0\}$ and $J_{01}\neq \{0\}$. By Lemma \ref{1}(2)(4), we have $A_1^*,A_4\in \textnormal{var}(A)$ and so $ \textnormal{Id}(A)\subseteq \textnormal{Id}(A_4)\cap \textnormal{Id}(A_1^*) $. Moreover, since $J_{00}\subseteq A$, it is immediately that $\textnormal{Id}(A)\subseteq \textnormal{Id}(J_{00})$. Therefore, $\textnormal{Id}(A)\subseteq \textnormal{Id}(A_4)\cap \textnormal{Id}(A_1^*)\cap \textnormal{Id}(J_{00}).$ We assert that $A\sim_T A_4\oplus A_1^*\oplus J_{00}$ in this case.

 Let $f\in P_n\cap \textnormal{Id}(A_4)\cap \textnormal{Id}(A_1^*)\cap \textnormal{Id}(J_{00})$. As long as $A_4\oplus A_2, A_4\oplus A_4^*\notin \textnormal{var}(A)$ then $A_2,A_4^*\notin \textnormal{var}(A)$. Moreover, since $A_9,A_{10}\notin \textnormal{var}(A)$, by Lemmas \ref{1}(3)(5) and \ref{j10j00}, we have 
 \begin{equation} \label{relacoes}
[J_{11},J_{11}]=J_{00}J_{01}=J_{10}J_{00}^2=0.
 \end{equation}

Suppose that $J_{01}J_{10}\neq \{0\}$ and consider $I=J_{10}J_{00}+J_{01}J_{10}J_{00}$. Observe that $I$ is in fact an ideal of $A$. Moreover, if $J_{01}J_{10}\subset I$ then $J_{01}J_{10}\subset J_{01}J_{10}J_{00}$, but this means that $J_{01}J_{10}J_{00}\subset J_{01}J_{10}J_{00}^2=\{0\}$. Therefore, we have $J_{01}J_{10}\not\subset I$ and so $R=A/I$ is an algebra satisfying 
$$J(R)_{10}J(R)_{00}=J(R)_{00}J(R)_{01}=\{0\}\mbox{ and } J(R)_{01}J(R)_{10}\neq \{0\}.$$ By Lemma \ref{1}(6), we have $A_5\in \textnormal{var}(R)\subset \textnormal{var}(A)$ and so $A_4\oplus A_5\in \textnormal{var}(A)$, a contradiction. Therefore, $J_{01}J_{10}= \{0\}$. 

Now, since $A_4\oplus A_6\notin \textnormal{var}(A)$ we have $A_6\notin \textnormal{var}(A)$. Thus, by Lemma \ref{1}(7) we have $J_{10}J_{01}=\{0\}$.

Consider a basis of $A$ consisting of the union of the bases of $F,J_{10},J_{11}$ and $J_{00}$. Therefore, a nonzero evaluation of $f$ on the basis of $A$ must be taken either on elements from $B_1=F+J_{11}+J_{01}$ or on $B_2=F+J_{11}+J_{10}+J_{00}$. In the first case, since $f\in \textnormal{Id}(A_1^*)$ then $f$ is a consequence of $f_1=x_1[x_2,x_3]$. Since $f_1\equiv 0$ on $B_1$ we have $f\equiv 0$ on $A$.

In the second case, we note that, by (\ref{relacoes}), a nonzero evaluation of $f$ on $B_2$ must be taken either on elements from $J_{00}$ or at most one variable is evaluated on $J_{00}$. In the first case, since $f\in \textnormal{Id}(J_{00})$ we get $f\equiv 0$ on $A$. In the second case, we notice that $f_2=[x_1,x_2]x_3x_4$ vanishes under all evaluation of $A$ in which at most one variable takes value on $J_{00}$. Since $f\in \textnormal{Id}(A_4)$, it follows from Lemma \ref{tideala6} that $f$ is a consequence of $f_2$ and then $f$ is an identity of $A$. 

Therefore, we obtain $\textnormal{Id}(A_4)\cap \textnormal{Id}(A_1^*)\cap  \textnormal{Id}(J_{00})\subseteq \textnormal{Id}(A)$ and so $A\sim_T A_4\oplus A_1^*\oplus N$, where $N=J_{00}$ is a nilpotent algebra.

Finally, the case when $J_{01} =\{0\}$ can be treated analogously by repeating the arguments above. In this case, we obtain $A\sim_T A_4\oplus N$.    
\end{proof}

In the following, let $$\mathcal{I}_4=\{A_5\oplus A_2, A_5\oplus A_6\}.$$

\begin{lemma} \label{lemmaa5}
    Let $A=F+J(A)$ be an algebra such that $J_{10}J_{00}=J_{00}J_{01}=\{0\}$. If $J_{01}J_{10}\neq \{0\}$ and $Q\notin \textnormal{var}(A)$, for all $Q\in \mathcal{I}_4$, then $A\sim_{T} A_5\oplus N.$ 
\end{lemma}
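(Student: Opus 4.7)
The plan is to mirror the structure of Lemma~\ref{lemmaa4} and establish the $T$-equivalence $A\sim_T A_5\oplus J_{00}$ with $J_{00}$ playing the role of the nilpotent algebra $N$, by showing $\textnormal{Id}(A)=\textnormal{Id}(A_5)\cap\textnormal{Id}(J_{00})$. I would begin by collecting the structural consequences of the hypotheses. Lemma~\ref{1}(6) applies directly to give $A_5\in\textnormal{var}(A)$, and since $A_5\oplus A_2\notin\textnormal{var}(A)$ forces $A_2\notin\textnormal{var}(A)$, Lemma~\ref{1}(3) yields $[J_{11},J_{11}]=\{0\}$. Using the Peirce multiplication rules $J_{ik}J_{rs}\subseteq\delta_{kr}J_{is}$ together with the hypotheses $J_{10}J_{00}=J_{00}J_{01}=\{0\}$, one checks immediately that $J_{00}$ is a two-sided ideal of $A$: every adjacency $J_{ij}\cdot J_{00}$ or $J_{00}\cdot J_{ij}$ with $(i,j)\neq(0,0)$ is forced to vanish, as is $F\cdot J_{00}=J_{00}\cdot F$ by the definition of $J_{00}$.

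The next step is to derive $J_{10}J_{01}=\{0\}$ from $A_5\oplus A_6\notin\textnormal{var}(A)$, via a quotient argument. In $R=A/J_{00}$ the Peirce components $J_{10},J_{11},J_{01}$ inject, and since $J_{01}J_{10}\subseteq J_{00}$ we have $J(R)_{01}J(R)_{10}=\{0\}$, whereas $J_{10}J_{01}\subseteq J_{11}$ is preserved by the quotient. Therefore, if $J_{10}J_{01}\neq\{0\}$, Lemma~\ref{1}(7) applied to $R$ would produce $A_6\in\textnormal{var}(R)\subseteq\textnormal{var}(A)$, and combined with $A_5\in\textnormal{var}(A)$ this would force $A_5\oplus A_6\in\textnormal{var}(A)$, contrary to hypothesis.

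For the key $T$-equivalence, one inclusion $\textnormal{Id}(A)\subseteq\textnormal{Id}(A_5)\cap\textnormal{Id}(J_{00})$ is immediate. For the reverse, I would take a multilinear $f\in P_n\cap\textnormal{Id}(A_5)\cap\textnormal{Id}(J_{00})$; since $\textnormal{Id}(A_5)=\langle x[y,z]w\rangle_T$ by Lemma~\ref{tideala6}, it suffices to check that every evaluation of $f$ on a Peirce basis of $A$ vanishes. The cases split naturally: if all variables land in $J_{00}$, apply $f\in\textnormal{Id}(J_{00})$; if the evaluation mixes $J_{00}$ with a non-$J_{00}$ component, every monomial contains an adjacency of the form $J_{00}\cdot J_{ij}$ or $J_{ij}\cdot J_{00}$ (or the same with $F$), each of which vanishes by the Peirce rules and the vanishing hypotheses; and if no variable lands in $J_{00}$, the evaluation is confined to the subalgebra $\tilde B=F+J_{10}+J_{11}+J_{01}+J_{01}J_{10}$, which is closed under multiplication precisely thanks to $J_{10}J_{01}=\{0\}$ just established. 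The main technical step, which I anticipate as the principal obstacle, is verifying $x[y,z]w\equiv 0$ on $\tilde B$: a case analysis shows that for $y,z\in\tilde B$ the commutator $[y,z]$ vanishes whenever either argument lies in $J_{01}J_{10}\subseteq J_{00}$ (since $\tilde B\cdot J_{00}=J_{00}\cdot\tilde B=\{0\}$) and otherwise lies in $J_{10}+J_{01}+J_{00}$ (using $[J_{11},J_{11}]=\{0\}$); then left multiplication by $x\in\tilde B$ annihilates the $J_{01}$ and $J_{00}$ pieces, and right multiplication by $w\in\tilde B$ annihilates the residual $J_{10}+J_{00}$ arising from $x\cdot J_{10}$, all by the Peirce rules combined with $J_{10}J_{01}=J_{10}J_{00}=J_{00}J_{01}=\{0\}$. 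Combining the three cases gives $f\in\textnormal{Id}(A)$ and completes the $T$-equivalence $A\sim_T A_5\oplus N$ with $N=J_{00}$ nilpotent.
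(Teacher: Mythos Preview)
Your proposal is correct and follows essentially the same strategy as the paper: establish $A_5\in\textnormal{var}(A)$, force $[J_{11},J_{11}]=J_{10}J_{01}=\{0\}$ from the excluded algebras in $\mathcal{I}_4$, and then split multilinear evaluations into the pure-$J_{00}$ case and the complementary case handled via the generator $x_1[x_2,x_3]x_4$ of $\textnormal{Id}(A_5)$. There are two small cosmetic differences: the paper passes to the quotient $A/J_{01}J_{10}$ (noting that $J_{01}J_{10}$ is itself an ideal) rather than your $A/J_{00}$ to invoke Lemma~\ref{1}(7), and the paper works directly with $B=F+J_{10}+J_{01}+J_{11}$ rather than your closure $\tilde B=B+J_{01}J_{10}$; your version is in fact a bit more careful on this last point, since $B$ is not a subalgebra and the consequence argument really requires $f_1\equiv 0$ on the subalgebra generated by the evaluation range.
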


\begin{proof} 
Suppose that \(J_{01}J_{10} \neq \{0\}\). By Lemma~\ref{1}(6), we have \(A_5 \in \mathrm{var}(A)\). Consequently,
\[
\mathrm{Id}(A) \subseteq \mathrm{Id}(A_5)\cap \mathrm{Id}(J_{00}).
\]

Let \(f\in P_n\cap \mathrm{Id}(A_5)\cap \mathrm{Id}(J_{00})\) and fix a basis of \(A\) obtained by the union of bases of \(F\), \(J_{10}\), \(J_{01}\), \(J_{11}\) and \(J_{00}\). To show that \(f\equiv 0\) on \(A\), we first establish that \(J_{10}J_{01}=\{0\}\).

Assume, by contradiction, that \(J_{10}J_{01}\neq \{0\}\). By hypothesis, \(J_{01}J_{10}\) is a two-sided ideal of \(A\). Therefore, in the quotient algebra \(R = A/J_{01}J_{10}\) we have
\[
J(R)_{10}J(R)_{00}
= J(R)_{00}J(R)_{01}
= J(R)_{01}J(R)_{10}
= \{0\}
\quad \mbox{ and }\quad 
J(R)_{10}J(R)_{01}\neq \{0\}.
\]
By Lemma~\ref{1}(7), it follows that \(A_6\in \mathrm{var}(R)\subseteq \mathrm{var}(A)\). Since \(A_5\oplus A_6\notin \mathrm{var}(A)\), we obtain a contradiction. Hence, $J_{10}J_{01}=\{0\}.$

Next, note that \(A_5\oplus A_2\notin \mathrm{var}(A)\). Thus $A_2\notin \mathrm{var}(A)$ and, by Lemma~\ref{1}(3), we deduce that \([J_{11},J_{11}] = \{0\}\). Summarizing, we obtain
\[
J_{00}J_{01}
= J_{10}J_{00}
= J_{10}J_{01}
= [J_{11},J_{11}]
= \{0\}.
\]

These relations imply that any non-zero evaluation of \(f\) on basis elements of \(A\) must lie either in \(J_{00}\) or in the subspace $B = F + J_{10} + J_{01} + J_{11}.$  In the first case, since \(f\in \mathrm{Id}(J_{00})\) then  \(f\equiv 0\) on \(A\). If the evaluation is taking on elements of \(B\), then we note that the polynomial \(f_1 = x_1[x_2,x_3]x_4\) satisfies \(f_1\equiv 0\) on \(B\). Since \(f\in \mathrm{Id}(A_5)\), Lemma~\ref{tideala6} implies that \(f\) is a consequence of \(f_1\), and thus \(f\equiv 0\) on \(A\).

Therefore, we conclude that \( A \sim_T A_5\oplus J_{00} \), which completes the proof.
\black 

\end{proof}




\black

\begin{lemma} \label{lemmaa6}
    Let $A=F+J(A)$ be an algebra satisfying $J_{10}J_{00}=J_{00}J_{01}=J_{01}J_{10}=\{0\}$. If $J_{10}J_{01}\neq \{0\}$ and $A_6\oplus A_2\notin \textnormal{var}(A)$ then $A\sim_{T} A_6\oplus N.$ 
\end{lemma}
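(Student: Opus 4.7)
The approach closely parallels the proof of Lemma~\ref{lemmaa5}. First, since $J_{10}J_{01}\neq\{0\}$ and $J_{01}J_{10}=\{0\}$ (both by hypothesis), Lemma~\ref{1}(7) gives $A_{6}\in\textnormal{var}(A)$. Combined with $J_{00}\in\textnormal{var}(A)$, this yields $\textnormal{Id}(A)\subseteq\textnormal{Id}(A_{6})\cap\textnormal{Id}(J_{00})$. The bulk of the work is the reverse inclusion, from which the conclusion $A\sim_{T}A_{6}\oplus N$ (with $N=J_{00}$ nilpotent) follows.

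To this end, let $f\in P_{n}\cap\textnormal{Id}(A_{6})\cap\textnormal{Id}(J_{00})$. From $A_{6}\oplus A_{2}\notin\textnormal{var}(A)$ we deduce $A_{2}\notin\textnormal{var}(A)$, and Lemma~\ref{1}(3) then yields $[J_{11},J_{11}]=\{0\}$. Collecting everything, we have $J_{10}J_{00}=J_{00}J_{01}=J_{01}J_{10}=[J_{11},J_{11}]=\{0\}$. Fix a basis of $A$ as the union of bases of $F$, $J_{10}$, $J_{11}$, $J_{01}$ and $J_{00}$. Using these relations together with the general rule $J_{ik}J_{rs}\subseteq\delta_{kr}J_{is}$, one checks that any product of basis elements mixing a factor in $J_{00}$ with one in $F+J_{10}+J_{11}+J_{01}$ vanishes. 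Hence every nonzero evaluation of $f$ either places all variables in $J_{00}$ (in which case $f\equiv 0$ by $f\in\textnormal{Id}(J_{00})$) or places all variables in $B=F+J_{10}+J_{11}+J_{01}$.

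For the latter case, Lemma~\ref{tideala6} gives $\textnormal{Id}(A_{6})=\langle f_{1},\ldots,f_{6}\rangle_{T}$, so it suffices to verify each $f_{i}$ vanishes on $B$-evaluations. The verification rests on four structural observations derived from the relations above: (i)~$[B,B]\subseteq J_{10}+J_{01}+J_{10}J_{01}$; (ii)~$[B,B]\cdot[B,B]\subseteq J_{10}J_{01}$, with the only surviving contribution being $J_{10}\cdot J_{01}$; (iii)~$B\cdot[B,B]\cdot B\subseteq J_{10}J_{01}$; (iv)~every element of $J_{10}J_{01}$ is central in $B$, since for $c=j_{10}j_{01}$ one has $c\cdot j_{10}'=j_{10}(j_{01}j_{10}')=0$, $c\cdot j_{01}'=0$, and $c$ commutes with elements of $J_{11}$ by commutativity of $J_{11}$. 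Observations (ii)--(iv) immediately give $f_{5},f_{6}\equiv 0$ on $B$, and together with $(J_{10}J_{01})J_{10}=(J_{10}J_{01})J_{01}=0$ they also yield $[B,B]\cdot B\cdot[B,B]=\{0\}$, so $f_{4}\equiv 0$.

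The remaining identities $f_{1},f_{2},f_{3}$ require a finer analysis. My plan is to decompose each commutator $[a,b]$ along the direct sum $B=F\oplus J_{10}\oplus J_{11}\oplus J_{01}$ and, in view of (ii), to reduce every product $[a,b][c,d]$ to the single term ``($J_{10}$-component of $[a,b]$)$\cdot$($J_{01}$-component of $[c,d]$)''. The antisymmetric and symmetric patterns in the explicit definitions of $f_{1},f_{2},f_{3}$ then produce exactly the cancellations (aided by the commutativity of $J_{11}$ and by $J_{01}J_{10}=0$) that make these polynomials identities of $A_{6}$. I expect this case-by-case bookkeeping to be the main obstacle: routine but delicate, because each term lies in the potentially high-dimensional subspace $J_{10}J_{01}$ rather than in the one-dimensional $Fe_{13}$ of $A_{6}$, so the vanishing must be established at the level of projected commutators rather than by scalar matching. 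Once complete, we conclude $\textnormal{Id}(A_{6})\cap\textnormal{Id}(J_{00})\subseteq\textnormal{Id}(A)$, yielding $A\sim_{T}A_{6}\oplus J_{00}$.
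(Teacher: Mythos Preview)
Your proposal is correct and follows essentially the same approach as the paper. Both arguments use Lemma~\ref{1}(7) to get $A_{6}\in\textnormal{var}(A)$, deduce $[J_{11},J_{11}]=\{0\}$ from $A_{6}\oplus A_{2}\notin\textnormal{var}(A)$, split off $J_{00}$ from $B=F+J_{10}+J_{11}+J_{01}$, and then verify that $B$ satisfies the generators $f_{1},\ldots,f_{6}$ of $\textnormal{Id}(A_{6})$; your structural observations (i)--(iv) and explicit treatment of $f_{4},f_{5},f_{6}$ in fact go further than the paper, which simply states that ``by means of long and detailed computations'' all six identities hold on $B$, so your unfinished bookkeeping for $f_{1},f_{2},f_{3}$ is exactly at the level the paper leaves it.
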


\begin{proof}
Similarly to the previous lemma, we notice that, under the hypothesis, we have $A = B \oplus J_{00}$, where $B=F + J_{10} + J_{01} + J_{11}$ is a subalgebra of $A$. Moreover, since $J_{01}J_{10}=\{0\}$ and $J_{10}J_{01}\neq \{0\}$, by Lemma \ref{1}(7), we obtain $A_6\in \textnormal{var}(B)$. Since $A_6\oplus A_2\notin \textnormal{var}(B)$ we have $A_2\notin \textnormal{var}(B)$ and so $[J_{11},J_{11}]=\{0\}$. Therefore, we have the identities $$[J_{11},J_{11}]=[J_{10}J_{01},F]=[J_{10}J_{01},J_{11}]=J_{01}J_{10}=\{0\}.$$ 
Now, by means of long and detailed computations, one can verify that, using the relations above, the subalgebra \( B \) satisfies the polynomials \( f_1, f_2, f_3, f_4, f_5 \) and \( f_6 \) from Lemma \ref{tideala6}. Hence, \( B \in \textnormal{var}(A_6) \), and consequently \( B \sim_T A_6 \).

Therefore, we have proved that \( A \sim_T A_6 \oplus N \).
\end{proof}

For convenience of notation, we set $\mathcal{I}_5=\{A_6\oplus A_2\}.$

\begin{remark} \label{remarksubv} Observe that $\mathcal{G}_6\in \textnormal{var}(\mathcal{G})$ and $A_4,A_4^*,N_4,A_9,A_9^*\in \textnormal{var}(UT_2).$
\end{remark}

We are now ready to prove the main result of this section. To this end, we define the following sets:
$$\mathcal{I}_6=\{ A_7,A_8,A_8^*\},\quad \mathcal{I}_7=\{A_6\oplus A_2, N_4, A_9,A_9^*, A_2\oplus A_4, A_2\oplus A_4^*, A_4\oplus A_4^* \} \quad \mbox{ and }\quad \mathcal{I}=\bigcup_{i=1}^7 \mathcal{I}_i.$$

\begin{theorem} \label{maintheorem}
		Let $A$ be an algebra over $F$. The following conditions are equivalent:
		\begin{enumerate}
			\item[1)]$Q\notin \textnormal{var}(A)$, for all $Q\in \mathcal{I} $.
			
			\item[2)] $A$ is PI-equivalent to one of the following algebras: $$N, C\oplus N, A_1\oplus N, A_1^*\oplus N, A_1\oplus A_1^*\oplus N, A_2\oplus N,$$
   $$  A_1\oplus A_2\oplus N,  A_1^*\oplus A_2\oplus N,  A_4\oplus N, A_4^*\oplus N, A_4\oplus A_1^*\oplus  N,$$
   $$  A_4^*\oplus A_1\oplus N,  A_5\oplus N, A_6\oplus N, \mathcal{G}_4\oplus N, \mathcal{G}_4\oplus A_1\oplus  N, \mathcal{G}_4\oplus A_1^* \oplus N,$$ where $N$ denotes a nilpotent algebra and $C$ is a commutative non-nilpotent algebra.

   \item[3)] $l_{n}(A)\leq 6,$ for $n$ large enough.
		\end{enumerate}
	\end{theorem}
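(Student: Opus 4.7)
The plan is to prove the theorem via the cycle $(2)\Rightarrow(3)\Rightarrow(1)\Rightarrow(2)$, so as to leverage the explicit colength computations of Section~3 for the first two implications and the structural lemmas of this section for the last. The cycle order is convenient because $(1)$ is purely about $\var(A)$-containment, $(3)$ is a numerical bound, and $(2)$ is the explicit classification.

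For $(2)\Rightarrow(3)$, I would verify algebra by algebra that each entry of the list has $l_n \leq 6$ for $n$ large. The colengths of $A_1$, $A_1^*$, $A_1\oplus A_1^*$, $A_2$, $A_1\oplus A_2$, $A_1^*\oplus A_2$ are known from Lemma~\ref{colengths1} and the result following Lemma~\ref{tideala6}; those of $A_4\oplus N$, $A_4^*\oplus N$, $A_4\oplus A_1^*\oplus N$, $A_4^*\oplus A_1\oplus N$, $A_5\oplus N$, $A_6\oplus N$ follow from Lemma~\ref{charac}(1) together with the discussion preceding it; and those of $\mathcal{G}_4$, $\mathcal{G}_4\oplus A_1$, $\mathcal{G}_4\oplus A_1^*$ follow from Lemma~\ref{G2k} and Lemma~\ref{idealg4a1}. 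Adjoining a nilpotent summand does not affect the $n$-th colength once $n$ exceeds the nilpotency index. For $(3)\Rightarrow(1)$, I would use Remark~\ref{comparacaochar}: if some $Q\in\mathcal{I}$ lay in $\var(A)$, then $l_n(A)\geq l_n(Q)$. A direct inspection against Lemma~\ref{A7} (for $A_7$, $A_8$, $A_8^*$, $A_9$, $A_9^*$, $N_4$), Lemma~\ref{G2k} (for $\mathcal{G}_6$), the remarks following Lemmas~\ref{A7} and \ref{g4barra1} (for $A_{10}$, $A_{10}^*$, $\overline{\mathcal{G}_4}$, $\overline{\mathcal{G}_4^*}$), and Lemma~\ref{charac} (for the remaining direct sums) shows that every $Q\in\mathcal{I}$ satisfies $l_n(Q)\geq 7$ for large $n$, giving a contradiction.

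For the main implication $(1)\Rightarrow(2)$, I would first reduce to polynomial codimension growth. By Remark~\ref{remarksubv}, $\mathcal{G}_6\in\var(\mathcal{G})$ and $A_9\in\var(UT_2)$; since $\mathcal{G}_6,A_9\in\mathcal{I}$, condition $(1)$ forces $\mathcal{G},UT_2\notin\var(A)$, and Theorem~\ref{teokemer} yields $A\sim_T B_1\oplus\cdots\oplus B_m$ with each $B_i$ finite-dimensional and $\dim B_i/J(B_i)\leq 1$. After collecting nilpotent and commutative summands into a single $N$ and (at most one) $C$, it remains to classify each non-trivial summand $B=F+J(B)$ using the Peirce decomposition $J(B)=J_{10}+J_{01}+J_{11}+J_{00}$. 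The case $\mathcal{G}_4\in\var(B)$ is closed by Lemma~\ref{lemmag4}, which outputs $B\sim_T C'\oplus N$ with $C'\in\{\mathcal{G}_4,\mathcal{G}_4\oplus A_1,\mathcal{G}_4\oplus A_1^*\}$. Otherwise, one branches on which mixed Peirce products are nonzero: if $J_{10}J_{00}\neq\{0\}$ or $J_{00}J_{01}\neq\{0\}$, then Lemma~\ref{lemmaa4} delivers an $A_4$- or $A_4^*$-summand possibly adjoined with $A_1^*$ or $A_1$; if instead $J_{01}J_{10}\neq\{0\}$ then Lemma~\ref{lemmaa5} gives $A_5$; if $J_{10}J_{01}\neq\{0\}$ with $J_{01}J_{10}=\{0\}$ then Lemma~\ref{lemmaa6} gives $A_6$. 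When all mixed products and $J_{00}$ vanish, Lemma~\ref{2}(2) places $B$ in $\var(UT_2)$, and Theorem~\ref{subvarieties}(2) combined with the exclusions in $\mathcal{I}_6$ and $\mathcal{I}_7$ restricts $B$ to the smaller algebras already on the list ($A_2$ with or without $A_1$ or $A_1^*$, or just $A_1$, $A_1^*$, or $C$).

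The main obstacle, to my mind, is the bookkeeping required to merge the individual classifications of the summands $B_i$ into a single global form from the seventeen-entry list. Each structural lemma only classifies one $F+J$-summand at a time, but the target list forbids most direct-sum combinations: at most one of $\{A_2,A_4,A_4^*,A_5,A_6,\mathcal{G}_4\}$ is allowed, and it can only be adjoined with $A_1$ or $A_1^*$ in precisely specified ways. The forbidden direct sums across $\mathcal{I}_1$--$\mathcal{I}_5$ ($A_4\oplus A_4^*$, $A_2\oplus A_4^*$, $A_2\oplus A_5$, $A_2\oplus A_6$, $\mathcal{G}_4\oplus A_4$, $\mathcal{G}_4\oplus A_1\oplus A_1^*$, and so on) must each be invoked at the moment of combining summands to eliminate the impossible configurations, and one must also verify that the union of the nilpotent tails from the several $B_i$ is itself nilpotent, not secretly generating a forbidden algebra from $\mathcal{I}$.
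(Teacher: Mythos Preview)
Your proposal is correct and follows essentially the same route as the paper: the same cycle $(2)\Rightarrow(3)\Rightarrow(1)\Rightarrow(2)$, the same reduction via Theorem~\ref{teokemer} to summands $F+J$, the same Peirce-product case split governed by Lemmas~\ref{lemmaa4}, \ref{lemmaa5}, \ref{lemmaa6}, \ref{lemmag4} and \ref{2}(2), and the same appeal to Theorem~\ref{subvarieties} in the residual $\var(UT_2)$ case. The only structural difference is that you dispatch the case $\mathcal{G}_4\in\var(B)$ \emph{before} the mixed-product analysis, whereas the paper postpones it until all four mixed products vanish; both orderings are legitimate since Lemma~\ref{lemmag4} carries no hypothesis on the $J_{ij}J_{kl}$ and, conversely, Lemmas~\ref{lemmaa4}--\ref{lemmaa6} do not require $\mathcal{G}_4\notin\var(B)$.

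Two small remarks. First, your phrase ``when all mixed products and $J_{00}$ vanish'' conflates two steps: in the paper one does not assume $J_{00}=\{0\}$ for $B_i$, but rather uses the vanishing of the mixed products to split $B_i=B\oplus J_{00}$ and then applies Lemma~\ref{2}(2) to the subalgebra $B=F+J_{10}+J_{01}+J_{11}$, which has trivial $00$-component by construction. Second, your final worry that the nilpotent tails might ``secretly generate a forbidden algebra from $\mathcal{I}$'' is unnecessary: a direct sum of nilpotent algebras is nilpotent, hence eventually has $c_n=0$, so it cannot contain any algebra of $\mathcal{I}$ in its variety. The genuine bookkeeping, as you correctly identify, is in ruling out the cross-sum combinations of the non-nilpotent pieces using the exclusions spread across $\mathcal{I}_1,\ldots,\mathcal{I}_5,\mathcal{I}_7$; the paper handles this in one sentence, and your proposal is no less precise on this point.
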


\begin{proof} Since, for \(n\) sufficiently large, we have \(l_n(S)\geq 7\) for every \(S\in \mathcal{I}\), then condition \(3)\) implies condition \(1)\). Moreover, by the results established in Section~3, we have already shown that \(2)\) implies \(3)\).

Assume now that $Q \notin \textnormal{var}(A)$ for all $Q \in \mathcal{I}$. By Remark \ref{remarksubv}, we have $\mathcal{G}, UT_2 \notin \textnormal{var}(A)$, and hence, by Theorem \ref{teokemer}, we obtain
$$
A \sim_{T} B_1 \oplus \cdots \oplus B_m,
$$
where each $B_i$ is a finite-dimensional algebra such that either $B_i$ is nilpotent or $B_i \cong F + J(B_i)$. 

If $B_i$ is nilpotent for every $i$, then $A \sim_T N$, where $N$ denotes a nilpotent algebra. Thus, we may assume that $B_i \cong F + J(B_i)$, for some $i \in \{1, \ldots, m\}$.

Let us write $J(B_i) = J_{10} + J_{01} + J_{00} + J_{11}$ the decomposition of the Jacobson radical as given in (\ref{decompj}). According to Lemma \ref{lemmaa4}, we have the following:

\begin{enumerate}
    \item[.] If $J_{10}J_{00} \neq \{0\}$, then either $B_i \sim_T A_4 \oplus N$ or $B_i \sim_T A_4 \oplus A_1^*\oplus N$;
    \item[.] If $J_{00}J_{01} \neq \{0\}$, then either $B_i \sim_T A_4^* \oplus N$ or $B_i \sim_T A_4^* \oplus A_1\oplus N$.
\end{enumerate}

Therefore, we may now assume that $J_{10}J_{00} = J_{00}J_{01} = \{0\}$. By Lemma \ref{lemmaa5}, if $J_{01}J_{10} \neq \{0\}$, then
$$
B_i \sim_T A_5 \oplus N.
$$

Hence, we consider the case where $J_{01}J_{10} = \{0\}$. In this case, it follows from Lemma \ref{lemmaa6} that, if $J_{10}J_{01} \neq \{0\}$, then
$$
B_i \sim_T A_6 \oplus N.
$$

Thus, we now assume
$$
J_{10}J_{00} = J_{00}J_{01} = J_{01}J_{10} = J_{10}J_{01} = \{0\}.
$$

Note that, under these conditions, $B_i = B \oplus J_{00}$, where $B = F + J_{10} + J_{01} + J_{11}$. Observe that, by Lemma \ref{lemmag4}, if $\mathcal{G}_4 \in \textnormal{var}(B_i)$, then $B_i$ is PI-equivalent to one of the following algebras:
$$
\mathcal{G}_4 \oplus N\quad \mbox{ or }\quad\mathcal{G}_4 \oplus A_1\oplus N\quad\mbox{ or }\quad\mathcal{G}_4 \oplus A_1^*\oplus N.
$$

Therefore, we also assume that $\mathcal{G}_4 \notin \textnormal{var}(B_i)$, and hence $\mathcal{G}_4 \notin \textnormal{var}(B)$. Since $Q \notin \textnormal{var}(B)$ for all $Q \in \mathcal{I}_6$, Lemma \ref{2}(2) implies that $B \in \textnormal{var}(UT_2)$. Using the classification given in Theorem \ref{subvarieties}, along with the fact that $Q \notin \textnormal{var}(B)$ for all $Q \in \mathcal{I}_7$, we conclude that $B$ is PI-equivalent to one of the following algebras:
$$N, C\oplus N,  A_1\oplus N,  A_1^*\oplus N,   A_1\oplus A_1^*\oplus N, A_2\oplus N, A_1 \oplus A_2 \oplus N, $$ $$ A_1^* \oplus A_2 \oplus N, A_1\oplus A_1^*\oplus A_2\oplus N, A_4 \oplus N, A_4^* \oplus N, A_4 \oplus A_1^*\oplus  N\text{ or } A_4^* \oplus A_1\oplus  N
$$
where $N$ denotes a nilpotent algebra and $C$ is a commutative non-nilpotent algebra.

Finally, we recall that $A$ is PI-equivalent to a direct sum of algebras of type $B$ with nilpotent algebras. Since certain direct sums among these algebras were excluded from $\textnormal{var}(A)$, the result follows.
\end{proof}

As a consequence, we are now able to present the classification of the varieties whose $l_n(A)=7$, for $n$ large enough. Before proceeding, we state an auxiliary result that will be essential in what follows.

\begin{proposition} \label{propo}
    Let $A$ be an algebra and let $B\in \textnormal{var}(A)$. If there exists an integer $k\geq 0$ such that $P_m\cap \textnormal{Id}(A)=P_m\cap \textnormal{Id}(B)$, for all $m\geq k$, then $A\sim_{T} B\oplus N$, where $N$ is a nilpotent algebra.
\end{proposition}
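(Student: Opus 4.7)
The plan is to construct a nilpotent algebra $N$ so that $\textnormal{Id}(A) = \textnormal{Id}(B) \cap \textnormal{Id}(N)$, which immediately yields $A \sim_T B \oplus N$.

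Set $J := \textnormal{Id}(A) + \langle x_1 x_2 \cdots x_k \rangle_T$ and let $N := F\langle X \rangle / J$. Since the generator $x_1 \cdots x_k$ vanishes on $N$, any product of $k$ elements of $N$ is zero, so $N^k = \{0\}$ and $N$ is nilpotent. Moreover, $N$ is a relatively free algebra of countable rank for the variety defined by $J$, from which it follows by a standard substitution argument that $\textnormal{Id}(N) = J$.

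To prove $\textnormal{Id}(A) = \textnormal{Id}(B) \cap \textnormal{Id}(N)$, the inclusion $\subseteq$ is immediate: $B \in \textnormal{var}(A)$ gives $\textnormal{Id}(A) \subseteq \textnormal{Id}(B)$, and $\textnormal{Id}(A) \subseteq J = \textnormal{Id}(N)$ holds by construction. For the reverse inclusion, since in characteristic zero a $T$-ideal is determined by its multilinear parts, it suffices to take $f \in P_m \cap \textnormal{Id}(B) \cap \textnormal{Id}(N)$ and show $f \in \textnormal{Id}(A)$. If $m \geq k$, the hypothesis $P_m \cap \textnormal{Id}(A) = P_m \cap \textnormal{Id}(B)$ gives this at once. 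If $m < k$, then from $f \in J$ we may write (using the multi-homogeneity of $T$-ideals) $f = g + h$ with $g \in P_m \cap \textnormal{Id}(A)$ and $h \in P_m \cap \langle x_1 \cdots x_k \rangle_T$, so matters reduce to a short sub-claim below.

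The main technical point is this sub-claim: $P_m \cap \langle x_1 \cdots x_k \rangle_T = \{0\}$ for $m < k$, which forces $h = 0$ and hence $f = g \in \textnormal{Id}(A)$. I would verify it by observing that $\langle x_1 \cdots x_k \rangle_T \subseteq \textnormal{Id}(U_k)$, where $U_k$ denotes the algebra of strictly upper triangular $k \times k$ matrices, and then checking directly that $U_k$ satisfies no nonzero multilinear identity of degree below $k$: if $f = \sum_{\sigma \in S_m} c_\sigma x_{\sigma(1)} \cdots x_{\sigma(m)} \in P_m$ with $m < k$, then evaluating at $x_i \mapsto e_{\tau(i), \tau(i)+1}$ for each permutation $\tau$ of $\{1, \ldots, m\}$ isolates each coefficient $c_\sigma$ and forces it to vanish. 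This establishes the sub-claim and concludes the argument.
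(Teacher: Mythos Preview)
Your proof is correct and follows essentially the same approach as the paper. The paper defines the T-ideal $I=\langle P_l\cap\textnormal{Id}(A),\,x_1\cdots x_k\mid 1\le l\le k-1\rangle_T$ and takes $N$ with $\textnormal{Id}(N)=I$; since in characteristic zero $\textnormal{Id}(A)$ is generated by its multilinear parts and $P_l\subseteq\langle x_1\cdots x_k\rangle_T$ for $l\ge k$, this $I$ coincides with your $J=\textnormal{Id}(A)+\langle x_1\cdots x_k\rangle_T$. The only difference is that the paper simply asserts that multilinear identities of $N$ in degree $<k$ come from $P_l\cap\textnormal{Id}(A)$, whereas you prove the equivalent sub-claim $P_m\cap\langle x_1\cdots x_k\rangle_T=\{0\}$ for $m<k$ explicitly via strictly upper triangular matrices; this makes your version slightly more self-contained.
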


\begin{proof}
 
First, notice that since $B\in \textnormal{var}(A)$, we have $P_n \cap \textnormal{Id}(A) \subseteq P_n \cap \textnormal{Id}(B),$ for all $n.$ Moreover, it is known that there exists a nilpotent algebra \( N \) whose \( T \)-ideal of identities is given by
\[
I = \left\langle P_l \cap \textnormal{Id}(A),\ x_1 \cdots x_k \ \middle| \ 1 \leq l \leq k-1\right\rangle_{T}.
\]

By definition of $I$, it is clear that
\[
P_n \cap \textnormal{Id}(A) \subseteq P_n \cap \textnormal{Id}(B) \cap \textnormal{Id}(N) = P_n \cap \textnormal{Id}(B \oplus N), \text{ for all } n.
\]

Furthermore, we observe that any multilinear identity of degree \( n < k \) of \( N \) is a consequence of the identities \( P_l \cap \textnormal{Id}(A) \), for all \( 1 \leq l \leq k-1 \). Consequently,
\[
P_n \cap \textnormal{Id}(B) \cap \textnormal{Id}(N) \subseteq P_n \cap \textnormal{Id}(N) \subseteq P_n \cap \textnormal{Id}(A), \text{ for all } n < k.
\]

Finally, since \( N \) is nilpotent of index \( k \) and $P_m\cap \textnormal{Id}(A)=P_m\cap \textnormal{Id}(B)$, for all $m\geq k$, we obtain
\[
P_n \cap \textnormal{Id}(A) = P_n \cap \textnormal{Id}(B \oplus N), \text{for all } n \geq k.
\]

Therefore, $P_n \cap \textnormal{Id}(A) = P_n \cap \textnormal{Id}(B \oplus N)$, for all $n$, and hence \( A \sim_{T} B \oplus N. \)  
\end{proof}

Let us define the subset $\mathcal{B}$ of $\mathcal{I}$ consisting of the algebra $A$ with $l_n(A)=7$, for $n$ large enougth, i.e., $$\mathcal{B}=\{A_7, \mathcal{G}_6, A_4\oplus A_2, A_4^* \oplus A_2, A_5\oplus A_2, A_6\oplus A_2 , \mathcal{G}_4\oplus A_1 \oplus A_1^* \}$$

 \begin{corollary}
Let $A$ be an algebra over a field of characteristic zero. Then $l_{n}(A)= 7,$ $n$ large enough, if and only if $A$ is PI-equivalent to $B\oplus N$, for some $B\in \mathcal{B}$ and $N$ a nilpotent algebra.
 \end{corollary}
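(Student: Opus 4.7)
The plan is to combine Theorem~\ref{maintheorem} with Proposition~\ref{propo}. For the easy direction, suppose $A\sim_T B\oplus N$ with $B\in \mathcal{B}$ and $N$ nilpotent of index, say, $k$. For every $n\geq k$ we have $P_n\subseteq \textnormal{Id}(N)$, so $P_n\cap \textnormal{Id}(B\oplus N)=P_n\cap \textnormal{Id}(B)$; hence $\chi_n(A)=\chi_n(B)$ and $l_n(A)=l_n(B)$ for large $n$. The equality $l_n(B)=7$ was verified case by case in Section~3: for $A_7$ by Lemma~\ref{A7} (together with the remark $N_4\sim_T A_7$), for $\mathcal{G}_6$ by Lemma~\ref{G2k}, and for $A_4\oplus A_2,\ A_4^*\oplus A_2,\ A_5\oplus A_2,\ A_6\oplus A_2,\ \mathcal{G}_4\oplus A_1\oplus A_1^*$ by Lemma~\ref{charac}.

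For the converse, assume $l_n(A)=7$ for $n$ large. By the contrapositive of Theorem~\ref{maintheorem} there exists $Q\in \mathcal{I}$ with $Q\in \textnormal{var}(A)$, and by Remark~\ref{comparacaochar} one has $l_n(Q)\leq l_n(A)=7$ for $n$ large. The first substantive step is to enumerate the algebras in $\mathcal{I}=\bigcup_{i=1}^{7}\mathcal{I}_i$ whose colengths are at most $7$, using the tables of colengths computed in Section~3. A direct inspection shows that every other element of $\mathcal{I}$ satisfies $l_n\geq 8$: indeed $A_8,A_8^*$ have colength $8$, ${\overline{\mathcal{G}_4}},{\overline{\mathcal{G}_4^*}}$ have colength $\geq 8$, $A_9,A_9^*,A_{10},A_{10}^*$ have colength $\geq 10$, and all remaining direct sums (for instance $\mathcal{G}_4\oplus A_4$, $A_4\oplus A_4^*$, $A_5\oplus A_6$, etc.) have colength $9$; the surviving candidates are, up to PI-equivalence (noting $N_4\sim_T A_7$ and $A_2\oplus A_4\sim_T A_4\oplus A_2$, $A_2\oplus A_4^*\sim_T A_4^*\oplus A_2$), precisely the members of $\mathcal{B}$.

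Fix $B\in \mathcal{B}$ such that $B\in \textnormal{var}(A)$ and $l_n(B)=l_n(A)=7$ for large $n$. Writing
\[
\chi_n(A)=\sum_{\lambda\vdash n}m_\lambda \chi_\lambda,\qquad \chi_n(B)=\sum_{\lambda\vdash n}m'_\lambda \chi_\lambda,
\]
Remark~\ref{comparacaochar} (applied to $B\in \textnormal{var}(A)$) gives $m'_\lambda\leq m_\lambda$ for every $\lambda$; since $\sum_\lambda m_\lambda=\sum_\lambda m'_\lambda=7$, the non-negative integers $m_\lambda-m'_\lambda$ sum to zero and hence are individually zero. Thus $\chi_n(A)=\chi_n(B)$ and in particular $c_n(A)=c_n(B)$ for $n$ large. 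Combined with the inclusion $P_n\cap \textnormal{Id}(A)\subseteq P_n\cap \textnormal{Id}(B)$ that stems from $B\in \textnormal{var}(A)$, a dimension count yields $P_n\cap \textnormal{Id}(A)=P_n\cap \textnormal{Id}(B)$ for all sufficiently large $n$. Proposition~\ref{propo} then produces a nilpotent algebra $N$ with $A\sim_T B\oplus N$, concluding the proof.

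The main obstacle is precisely the colength enumeration in the second paragraph: one must read off, from the many lemmas of Section~3, the asymptotic value of $l_n$ for every algebra and every direct sum appearing in $\mathcal{I}_1,\ldots,\mathcal{I}_7$, and verify that no algebra outside $\mathcal{B}$ (up to PI-equivalence) produces the value $7$. The rest of the argument is a short structural application of Remark~\ref{comparacaochar} and Proposition~\ref{propo}.
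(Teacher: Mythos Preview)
Your proof is correct and follows essentially the same approach as the paper's: both directions rely on the colength computations of Section~3, the contrapositive of Theorem~\ref{maintheorem}, Remark~\ref{comparacaochar}, and Proposition~\ref{propo}. You simply spell out in detail the enumeration step (which algebras in $\mathcal{I}$ have $l_n\le 7$) and the passage from $l_n(A)=l_n(B)$ to $P_n\cap\textnormal{Id}(A)=P_n\cap\textnormal{Id}(B)$, both of which the paper compresses into a single line.
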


 \begin{proof}
    First, note that if \( A \) is \(\textnormal{T}\)-equivalent to one of the algebras $A_7, \mathcal{G}_6, A_4\oplus A_2, A_4^* \oplus A_2, A_5\oplus A_2, \linebreak  A_6\oplus A_2$ and $\mathcal{G}_4\oplus A_1 \oplus A_1^*$, then, by Lemmas \ref{A7}, \ref{G2k} and \ref{charac}, it follows that \( l_n(A) = 7 \), for $n$ large enough. 
    
    Conversely, assume that there exists $k\geq 0$ such that \( l_n(A) = 7 \), for all $n\geq k$. By Theorem \ref{maintheorem}, we must have $S\in \textnormal{var}(A)$, for some $S\in \mathcal{B}$. Notice that, for all $n\geq k$, we have $7=l_n(P)\leq l_n(A)=7$ and so, $P_n\cap \textnormal{Id}(S)=P_n\cap \textnormal{Id}(A)$, for all $n\geq k$. 
By Proposition \ref{propo}, we conclude that $A\sim_T S\oplus N$.
 \end{proof}

\black 

\section{Classifying varieties of small central colength}

In this section, we investigate the central polynomials and the central cocharacters associated with the algebras introduced above. We describe the sequences of central cocharacters and the corresponding central colengths for several of these algebras. As a consequence, we obtain the classification of the varieties whose central colengths are bounded by 2.

In the study of polynomial identities, an important role is played by the algebra of upper block triangular matrices, denoted $UT(d_1,\ldots,d_k)$. This algebra is defined as the subalgebra of $M_{d_1+\cdots+d_k}(F)$ consisting of matrices of the form  
\[
UT(d_1, \ldots, d_k) =
\left(
\begin{array}{cccc}
A_1 & A_{12} & \cdots & A_{1k} \\
& A_2 & \ddots & \vdots \\
& & \ddots & A_{k-1,k} \\
0 & & & A_k
\end{array}
\right),
\]
where $A_i \cong M_{d_i}(F)$ for $1 \leq i \leq k$, and $A_{ij} \cong M_{d_i \times d_j}(F)$ for $1 \leq i < j \leq k$.  

A fundamental result proved in \cite[Lemma~1]{GiamZai} states that, if $k>1$, this algebra admits no proper central polynomials. Therefore, we have 
\[
l_n^\delta(UT(d_1,\ldots,d_k)) = 0, \quad \text{for } k>1.
\]

This example produces an infinity list of algebras whose proper central colenths are zero. This fact indicates that a classification of varieties with respect to the sequence $l_n^\delta(A)$ seems, at least at a first stage, to be out of reach.

Here, we focus on the sequence $l_n^z(A),\, {n \geq 1}$.  Before we present the main result of this section, we consider the following results.

\begin{remark} \label{remarkcentral}
    Observe that if $Z(A)=\{0\}$ then $\chi_n^z(A)=
    \chi_n(A)$, $l_n(A)=l_n^z(A)$ and $l_n^\delta(A)=0$. In particular, $$\chi_n^z(A_1)=\chi_n^z(A_1^*)= \chi_{(n)}+\chi_{(n-1,1)}
    \quad \mbox{ and }\quad l_n^z(A_1)=l_n^z(A_1^*)=2.$$  Moreover, $l_n^z(A_1\oplus A_1^*)=3$ and $l_n^z(A_4)=l_n^z(A_4^*)=5$. 
\end{remark}

The following lemma can be verified in \cite{Costa}. 

\begin{lemma} \label{central1} For the algebras  $A_2$, $N_4$ and $\mathcal{G}_{2k}$ we have

\begin{enumerate}
    \item[1)] $\chi_n(A_2)= \chi_{(n)},$ $ l_n^z(A_2)=1\mbox{ and }l_n^\delta(A_2)=2.$
     \item[2)] $\chi_n^z(N_4)=\chi_{(n)}+\chi_{(n-1,1)}+\chi_{(n-2,1^2)},$ $ l_n^z(N_4)=3\mbox{ and } l_n^\delta(N_4)=4.$
    \item[3)] $\chi_n^z(\mathcal{G}_{2k})=\sum_{i=0}^{k-1} \chi_{(n-i,1^{2i})}, $ $l_n^z(\mathcal{G}_{2k})=k\mbox{ and } l_n^\delta(\mathcal{G}_{2k})=k+1$ , for all $k\ge 1$.
   
\end{enumerate}

\end{lemma}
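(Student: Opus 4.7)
The plan is to handle each algebra separately, using the known cocharacter decompositions together with an analysis of which multilinear polynomials take central values; throughout I exploit the additivity $\chi_n(A) = \chi_n^z(A) + \chi_n^\delta(A)$ to deduce the proper central colength from the other two.

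For part 1), a direct computation in $A_2 = N_3$ gives $Z(A_2) = F(e_{11}+e_{22}+e_{33}) + Fe_{13}$ together with $[A_2, A_2] = Fe_{13} \subseteq Z(A_2)$. By Lemma \ref{tideala6}, every multilinear polynomial of degree $n$ reduces modulo $\textnormal{Id}(A_2)$ to a linear combination of $x_1 \cdots x_n$ and expressions $x_{i_1} \cdots x_{i_{n-2}}[x_j, x_k]$; since every commutator-containing term evaluates into $Fe_{13}$, it belongs to $\textnormal{Id}^z(A_2)$, while $x_1 \cdots x_n$ is not central (take $x_1 = e_{12}$ and all other $x_i = I_3$). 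Hence $\chi_n^z(A_2) = \chi_{(n)}$, so $l_n^z(A_2) = 1$, and $l_n^\delta(A_2) = l_n(A_2) - l_n^z(A_2) = 3 - 1 = 2$.

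For part 2), I would use the cocharacter $\chi_n(N_4) = \chi_{(n)} + 2\chi_{(n-1,1)} + 2\chi_{(n-2,1^2)} + \chi_{(n-2,2)} + \chi_{(n-3,2,1)}$ recalled in the remark before Lemma \ref{A7}. For each shape $\lambda$ in the support, I would construct the standard highest weight vectors $f_{T_\lambda}$ via the recipe of Section 3 and test centrality by evaluation on the basis $\{I_4, E, E^2, e_{12}, e_{13}, e_{14}\}$ of $N_4$; the multiplicity $m_\lambda^z$ is the dimension of the subspace of highest weight vectors that remain non-central modulo $\textnormal{Id}(N_4)$. Non-central witnesses of the forms $x_1 \cdots x_n$, $[x_1, x_2] x_3 \cdots x_n$, and $[x_1, x_2, x_3] x_4 \cdots x_n$ give $m_\lambda^z = 1$ for $\lambda \in \{(n), (n-1,1), (n-2,1^2)\}$, while for the shapes $(n-2,2)$ and $(n-3,2,1)$ every highest weight vector should evaluate centrally, since in those shapes every standard expression (modulo identities) involves two commutators and the relevant products land in the center of $N_4$. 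For part 3), the $\mathbb{Z}_2$-grading makes $[a,b] \in \mathcal{G}_{2k}^{(0)} = Z(\mathcal{G}_{2k})$ for all $a, b$, so $[x_1, x_2] \in \textnormal{Id}^z(\mathcal{G}_{2k})$ and every T-space consequence of this commutator is a central polynomial. Starting from $\chi_n(\mathcal{G}_{2k}) = \sum_{i=0}^{2k} \chi_{(n-i, 1^i)}$ by Lemma \ref{G2k}, I would associate to each hook $(n-i, 1^i)$ a highest weight vector $St_{i+1}(x_1, \ldots, x_{i+1}) x_1^{n-i-1}$ and test its centrality via evaluations where $x_1$ ranges over even elements (including $1$) and the remaining variables over odd Grassmann generators; a parity analysis on the resulting monomial separates the central hooks from the non-central ones, producing the claimed decomposition.

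The main obstacle is the evaluation analysis in part 3): many naive substitutions produce zero (standard polynomials in partially commuting arguments vanish, and odd generators square to zero), so one must choose specializations carefully to detect non-central behavior. The boundary hook of leg length $2k$ is particularly delicate, since its highest weight vector involves essentially all $2k$ Grassmann generators of $\mathcal{G}_{2k}$ and borders on being an identity; a substitution mixing $1$, odd generators, and even products of generators is typically needed to rule out centrality for this extremal case and secure the count $l_n^z(\mathcal{G}_{2k}) = k$ and $l_n^\delta(\mathcal{G}_{2k}) = (2k+1) - k = k+1$.
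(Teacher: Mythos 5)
The paper itself does not prove this lemma: the sentence immediately preceding it reads ``The following lemma can be verified in [Costa],'' so the only reference argument is a citation. Your proposal is thus a genuinely new proof attempt, and its strategy -- combine the known cocharacter with highest weight vectors and test centrality by evaluation -- is the same approach the authors use elsewhere (for example in the lemma on $A_5$ and $A_6$). Part~1 is correct as written.

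There is, however, a concrete error in part~2. You claim that $[x_1,x_2,x_3]x_4\cdots x_n$ is a non-central witness for the shape $(n-2,1^2)$, but this polynomial is in fact a \emph{central} polynomial of $N_4$. A direct computation gives $[N_4,N_4]=\operatorname{span}_F\{e_{13},e_{14}\}$ and $[N_4,N_4,N_4]=Fe_{14}$, while $Fe_{14}\cdot N_4=Fe_{14}$ and $Z(N_4)=FI_4+Fe_{14}$. Hence every evaluation of $[x_1,x_2,x_3]x_4\cdots x_n$ on $N_4$ lies in $Fe_{14}\subseteq Z(N_4)$, so this polynomial lies in $\textnormal{Id}^z(N_4)$ and cannot certify $m_{(n-2,1^2)}^z\geq 1$. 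The conclusion is still correct, but it must be established with a different witness: for instance, the genuine highest weight vector $f=St_3(x_1,x_2,x_3)x_1^{n-3}$ is not central, since taking $x_1\mapsto I_4+E$, $x_2\mapsto e_{12}$, $x_3\mapsto I_4$ gives $f\mapsto -e_{13}-(n-3)e_{14}\notin Z(N_4)$. The multilinearization of $f$ contains summands of the form $x_{i_1}\cdots[x_a,x_b]x_c\cdots$ in which the commutator has length two, and these do not all funnel through the triple commutator, which is why $f$ escapes $\textnormal{Id}^z(N_4)$. Relatedly, your claim that the polynomials $[x_1,x_2]x_3\cdots x_n$ and $[x_1,x_2,x_3]x_4\cdots x_n$ ``give $m_\lambda^z=1$'' for the shapes $(n-1,1)$ and $(n-2,1^2)$ conflates multilinear representatives with highest weight vectors; even for $(n-1,1)$ one must still argue that the two independent highest weight vectors $[x_1,x_2]x_1^{n-2}$ and $x_1^{n-2}[x_1,x_2]$ become dependent modulo $\textnormal{Id}^z(N_4)$ (which does hold, because their difference is a sum of terms containing a triple commutator, hence central by the computation above). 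Your treatment of $(n-2,2)$ and $(n-3,2,1)$ by appealing to ``two commutators'' also needs care, since the naive highest weight vector $[x_1,x_2]^2x_1^{n-4}$ is already a polynomial identity of $N_4$ (as $[N_4,N_4]^2=0$) and therefore is not the representative whose centrality must be checked. Part~3 is a reasonable plan; the parity argument for why $[x_1,x_2]$ being central forces only the hooks of even leg length at most $2k-2$ to survive is the right idea, and you correctly flag the boundary hook $(n-2k,1^{2k})$ as the delicate case.
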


\begin{lemma} For the algebras $A_5$ and $A_6$ we have
    \begin{enumerate} 
        \item[1)] $\chi_n^z(A_5)=\chi_n^z(A_6)=\chi_{(n)}+2\chi_{(n-1,1)}$ and $\chi_n^\delta(A_5)=\chi_n^\delta(A_6)=\chi_{(n-2,2)}+\chi_{(n-2,1^2)}.$
        \item[2)] $l_n^z(A_5)=l_n^z(A_6)=3$ and $l_n^\delta(A_5)=l_n^\delta(A_6)=2$.
    \end{enumerate}
\end{lemma}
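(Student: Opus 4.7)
The plan is to identify the centres of $A_5$ and $A_6$, use that they are two-sided ideals to reduce the computation of the central cocharacter to that of the quotient algebra $A/Z$, and recover the proper central cocharacter by subtracting from the total cocharacter.

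First I would check directly from the multiplication tables that $Z(A_5)=Z(A_6)=Fe_{13}$, and that this equals $J^2$ in both cases, where $J$ is the Jacobson radical. In $A_5$ the element $e_{13}$ is a two-sided annihilator; in $A_6$ one has $(e_{11}+e_{33})e_{13}=e_{13}(e_{11}+e_{33})=e_{13}$ while the remaining generators annihilate $e_{13}$; the other basis elements are ruled out as central by, for example, $[e_{22},e_{12}]\ne 0$ in $A_5$ or $[e_{11}+e_{33},e_{12}]\ne 0$ in $A_6$. Since $Z$ is an ideal, the quotient map $\pi\colon A\to A/Z$ is an algebra homomorphism, and the equivalence $f(a_1,\ldots,a_n)\in Z\iff f(\pi(a_1),\ldots,\pi(a_n))=0$ gives the identification $\textnormal{Id}^z(A)=\pi^{-1}(\textnormal{Id}(A/Z))$. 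This yields an $S_n$-module isomorphism $P_n^z(A)\cong P_n(A/Z)$, so $\chi_n^z(A)=\chi_n(A/Z)$ and $l_n^z(A)=l_n(A/Z)$.

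Next I would compute the cocharacter of the three-dimensional quotients $\bar A=A/Z$. In both cases $\bar A$ has a single idempotent $\bar e$ and a two-dimensional radical decomposed as $\bar J_{01}\oplus\bar J_{10}$ with $\bar J^2=0$. The condition $\bar J^2=0$ kills every double-commutator-based HWV, so the shapes $(n-2,2)$ and $(n-2,1^2)$ contribute multiplicity $0$ to $\chi_n(\bar A)$; meanwhile the symmetric HWV $x_1\cdots x_n$ of shape $(n)$ is a non-identity of $\bar A$, and the two HWVs $[x_1,x_2]x_3\cdots x_n$ and $x_3\cdots x_n[x_1,x_2]$ of shape $(n-1,1)$ (the only surviving placements modulo the inherited identity $x[y,z]w=0$) are independent non-identities, yielding multiplicity $2$. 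Thus $\chi_n(\bar A)=\chi_{(n)}+2\chi_{(n-1,1)}$ and $l_n(\bar A)=3$, giving the claimed values of $\chi_n^z$ and $l_n^z$.

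For the proper central part I would use $\chi_n^\delta=\chi_n-\chi_n^z$ together with the total cocharacters of $A_5$ and $A_6$, which both turn out to equal $\chi_{(n)}+2\chi_{(n-1,1)}+\chi_{(n-2,2)}+\chi_{(n-2,1^2)}$: for $A_5$ this follows from $\textnormal{Id}(A_5)=\langle x[y,z]w\rangle_T$ via the middle-swap basis of $P_n$ (which yields $c_n=n(n-1)$) together with a direct HWV count; the analogous computation for $A_6$, using its longer list of generating identities, gives the same decomposition. Subtracting produces $\chi_n^\delta(A_5)=\chi_n^\delta(A_6)=\chi_{(n-2,2)}+\chi_{(n-2,1^2)}$ and $l_n^\delta=2$. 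To confirm that these components are genuinely present in $\Delta_n(A)$, one exhibits explicit proper central witnesses: $[x_1,x_2][x_3,x_4]$ is a non-identity lying in $J\cdot J=Z$, and $St_3(x_1,x_2,x_3)$ is a non-identity lying in $Z$ (since it vanishes on $\bar A$, its values on $A$ lie in $\ker\pi=Z$); these degree-$4$ and degree-$3$ seeds must then be lifted to multilinear HWVs of the correct shape in each higher degree.

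The principal technical obstacle is producing degree-$n$ HWVs of shapes $(n-2,2)$ and $(n-2,1^2)$ for $A_5$ when $n$ exceeds the degree of the seed. Because $e_{13}$ is a two-sided annihilator in $A_5$, the naive extensions $[x_1,x_2][x_3,x_4]x_5\cdots x_n$ and $St_3(x_1,x_2,x_3)x_4\cdots x_n$ vanish identically, so one must replace them by HWVs obtained by solving the highest-weight conditions $E_{ij}f=0$ directly in the relevant multihomogeneous component of the relatively free algebra $F_m/\textnormal{Id}(A_5)$. The middle-swap equivalence induced by $x[y,z]w=0$ collapses each such component to a low-dimensional space in which the kernel of the raising operators is easily computed, but the resulting HWVs are multi-term linear combinations whose centrality must then be verified term-by-term against the multiplication table of $A_5$.
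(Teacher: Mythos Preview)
Your core approach is correct and genuinely different from the paper's. The paper works directly in $A_5$: for each partition it exhibits highest weight vectors and checks by explicit evaluation whether their values land in $Z(A_5)=Fe_{13}$ (e.g.\ $x_1^n\mapsto e_{22}\notin Z$, and under $x_1\mapsto e_{22},\ x_2\mapsto e_{12}+e_{23}$ the two HWVs $[x_1,x_2]x_1^{n-2}$ and $x_1^{n-2}[x_1,x_2]$ map to $-e_{12}$ and $e_{23}$, linearly independent modulo $\textnormal{Id}^z(A_5)$), and then asserts that every HWV of shape $(n-2,2)$ or $(n-2,1^2)$ is central. Your route---observe that $Z(A)=Fe_{13}$ is an ideal, whence $\textnormal{Id}^z(A)=\textnormal{Id}(A/Z)$ and $\chi_n^z(A)=\chi_n(A/Z)$, then compute the ordinary cocharacter of the three-dimensional quotient with $\bar J^2=0$---is more structural and treats $A_5$ and $A_6$ uniformly without repeating any evaluation. (Minor notational slip: $\pi^{-1}(\textnormal{Id}(A/Z))$ is ill-typed since $\pi\colon A\to A/Z$; you simply mean $\textnormal{Id}^z(A)=\textnormal{Id}(A/Z)$ as subsets of $F\langle X\rangle$.)

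Your final two paragraphs, however, are superfluous and work against you. Once $\chi_n^z(A)$ is determined via the quotient and $\chi_n(A)$ is taken from the known cocharacters of $A_5$ and $A_6$, the identity $\chi_n^\delta=\chi_n-\chi_n^z$ \emph{is} the proof that $m_{(n-2,2)}^\delta=m_{(n-2,1^2)}^\delta=1$; there is no further obligation to exhibit proper central witnesses in each degree, and the ``principal technical obstacle'' you describe---constructing degree-$n$ HWVs of those shapes that are central but not identities---is a problem your own argument has already rendered unnecessary. Drop those paragraphs and the proof is complete.
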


\begin{proof}
First, observe that the centers of the algebras $A_5$ and $A_6$ are
given by
\[
Z(A_5)=Z(A_6)=\operatorname{span}_F\{e_{13}\}.\] We prove the statement for $A_5$ and omit the proof for $A_6$, since the
arguments are entirely analogous.

Consider first the partition $(n)$ and its highest weight vector
$f_{(n)}=x_1^n$. Evaluating at $x_1\mapsto e_{22}$, we obtain
$f_{(n)}\mapsto e_{22}\notin Z(A_5)$. This shows that
$1\le m_{(n)}^{z}\le m_{(n)}=1$, and hence $m_{(n)}^{z}=1$.

Next, consider the partition $(n-1,1)$ and the associated highest weight
vectors
\[
f^{1}_{(n-1,1)}=[x_1,x_2]x_1^{\,n-2}
\quad \text{and} \quad
f^{2}_{(n-1,1)}=x_1^{\,n-2}[x_1,x_2].
\]
Under the evaluation $x_1\mapsto e_{22}$ and
$x_2\mapsto e_{12}+e_{23}$, we have
$f^{1}_{(n-1,1)}\mapsto -e_{12}\notin Z(A_5)$ and
$f^{2}_{(n-1,1)}\mapsto e_{23}\notin Z(A_5)$. Moreover, the polynomials
$f^{1}_{(n-1,1)}$ and $f^{2}_{(n-1,1)}$ are linearly independent modulo $\textnormal{Id}^z(A_5)$. Therefore,
$2\le m_{(n-1,1)}^{z}\le m_{(n-1,1)}=2$, which implies
$m_{(n-1,1)}^{z}=2$.

Finally, for the partitions $(n-2,2)$ and $(n-2,1^2)$, one verifies that every
highest weight vector associated with them is central. Consequently, $m_{(n-2,2)}^{z}=m_{(n-2,1^2)}^{z}=0.$

Therefore, we conclude that
\[
\chi_n^{z}(A_5)=\chi_{(n)}+2\chi_{(n-1,1)}
\quad \text{and} \quad
\chi_n^{\delta}(A_5)=\chi_{(n-2,2)}+\chi_{(n-2,1^2)}.
\]

\end{proof}

\black

At this point, we remark that an analogous statement to Remark~\ref{comparacaochar} also holds for the central cocharacter. Consequently, we obtain the following result.

\begin{lemma} \label{centralcolength} For the direct sum of some of the previous algebras, we have
    \begin{enumerate}
  \item[1)] $\chi_n^z(A_2\oplus A_1)=
        \chi_n^z(A_2\oplus A_1^*)=\chi_{(n)}+\chi_{(n-1,1)}.$
    
        \item[2)] $\chi_n^z(\mathcal{G}_4\oplus A_1)=
        \chi_n^z(\mathcal{G}_4\oplus A_1^*)=\chi_{(n)}+\chi_{(n-1,1)}+\chi_{(n-1,1^2)}.$ 

        \item[3)] $l_n^z(A_2\oplus A_1)= l_n^z(A_2\oplus A_1^*)=l_n^\delta(A_2\oplus A_1)= l_n^\delta(A_2\oplus A_1^*)=2$.
 \item[4)] $l_n^z(\mathcal{G}_4\oplus A_1)= l_n^z(\mathcal{G}_4\oplus A_1^*)=l_n^\delta(\mathcal{G}_4\oplus A_1)= l_n^\delta(\mathcal{G}_4\oplus A_1^*)=3$.
    
    \end{enumerate}
\end{lemma}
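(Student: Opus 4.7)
\bigskip

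\noindent\textbf{Proof plan.} The approach is to mimic, for the central cocharacter, the two-sided squeeze used in the ordinary cocharacter setting. First I would record the central analog of Remark~\ref{comparacaochar}: since $Z(A\oplus B) = Z(A)\oplus Z(B)$, one checks that $\textnormal{Id}^z(A\oplus B) = \textnormal{Id}^z(A)\cap \textnormal{Id}^z(B)$, so there is a natural $S_n$-injection $P_n^z(A\oplus B)\hookrightarrow P_n^z(A)\oplus P_n^z(B)$ giving $m_\lambda^z(A\oplus B)\leq m_\lambda^z(A) + m_\lambda^z(B)$, and natural $S_n$-surjections $P_n^z(A\oplus B)\twoheadrightarrow P_n^z(A),\, P_n^z(B)$ giving $m_\lambda^z(A\oplus B)\geq \max\{m_\lambda^z(A), m_\lambda^z(B)\}$. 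With this, the plan is to input the known central cocharacters $\chi_n^z(A_1) = \chi_n^z(A_1^*) = \chi_{(n)} + \chi_{(n-1,1)}$ from Remark~\ref{remarkcentral}, together with $\chi_n^z(A_2) = \chi_{(n)}$ and $\chi_n^z(\mathcal{G}_4) = \chi_{(n)} + \chi_{(n-2,1^2)}$ from Lemma~\ref{central1}, and read off each $m_\lambda^z$ of the direct sum by comparing these bounds against the total multiplicity $m_\lambda$ coming from the cocharacter decompositions already computed in Section~3.

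For part (1) I would take $A_2\oplus A_1$: the upper bound forces $m_{(n-2,1^2)}^z = 0$ (neither summand contributes central mass there), while the partitions $(n)$ and $(n-1,1)$ receive upper bound~$1$ and lower bound~$1$ (from $A_2$ and $A_1$ respectively), pinning the multiplicity to $1$ in each case. This yields $\chi_n^z(A_2\oplus A_1) = \chi_{(n)} + \chi_{(n-1,1)}$, and the same argument with $A_1^*$ in place of $A_1$ settles $A_2\oplus A_1^*$. For part (2) the same mechanism applied to $\mathcal{G}_4\oplus A_1$ gives upper bound~$1$ for each of the shapes $(n),\, (n-1,1),\, (n-2,1^2)$ and $0$ for $(n-3,1^3),\, (n-4,1^4)$, and the lower bound shows each of the first three is realized, producing $\chi_n^z(\mathcal{G}_4\oplus A_1) = \chi_{(n)} + \chi_{(n-1,1)} + \chi_{(n-2,1^2)}$.

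Parts (3) and (4) then follow by summing multiplicities to get $l_n^z$, and by using the identity $l_n^\delta(A) = l_n(A) - l_n^z(A)$ together with the already established values $l_n(A_2\oplus A_1) = 4$ (Lemma~\cite{GLa, Daniela} in Section~3) and $l_n(\mathcal{G}_4\oplus A_1) = 6$ (Lemma~\ref{idealg4a1}).

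The main thing to handle with care is the central analog of Remark~\ref{comparacaochar}, since the standard inclusion $B\in\textnormal{var}(A) \Rightarrow \textnormal{Id}(A)\subseteq \textnormal{Id}(B)$ does not by itself transfer to $\textnormal{Id}^z$; however, for direct sums the identity $\textnormal{Id}^z(A\oplus B) = \textnormal{Id}^z(A)\cap \textnormal{Id}^z(B)$ recovers exactly the needed two-sided comparison, and once that is in place the entire lemma reduces to matching upper and lower bounds that coincide for every relevant partition.
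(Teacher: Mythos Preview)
Your proposal is correct and follows essentially the same approach the paper indicates: the paper gives no detailed proof but simply remarks that ``an analogous statement to Remark~\ref{comparacaochar} also holds for the central cocharacter'' and declares the lemma a consequence. Your write-up makes this precise, correctly noting that while the subvariety half of Remark~\ref{comparacaochar} does not transfer to $\textnormal{Id}^z$, the direct-sum identity $P_n\cap\textnormal{Id}^z(A\oplus B)=P_n\cap\textnormal{Id}^z(A)\cap\textnormal{Id}^z(B)$ yields both the injection $P_n^z(A\oplus B)\hookrightarrow P_n^z(A)\oplus P_n^z(B)$ and the surjections onto each factor, which together with the bound $m_\lambda^z\le m_\lambda$ pin down every multiplicity.
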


Consider $$\mathcal{T}_1=\{\mathcal{G}_4\oplus A_1,\mathcal{G}_4\oplus A_1^*, N_4, \mathcal{G}_{6} \}.$$

\begin{lemma} \label{lemmag4central}  Let $A=F+J(A)$ such that $\mathcal{G}_4\in \textnormal{var}(A)$. If $Q\notin \textnormal{var}(A)$, for all $Q\in \mathcal{T}_1$, then $A\sim_T \mathcal{G}_4
\oplus N$, where $N$ is a nilpotent algebra.
\end{lemma}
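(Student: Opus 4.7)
The strategy closely parallels Lemma \ref{lemmag4}: decompose $J(A)$ as in \eqref{decompj}, control the ``Grassmann part'' $F+J_{11}$ using Theorem \ref{subvarieties}, and then use the exclusions in $\mathcal{T}_1$ to force $J_{10}=J_{01}=\{0\}$ and $F+J_{11}\sim_T \mathcal{G}_4\oplus N'$.

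First I would exploit the equivalence $N_4\sim_T A_7$ stated in the remark after Lemma \ref{A7}. Since $N_4\notin \textnormal{var}(A)$, we have $A_7\notin \textnormal{var}(A)$, and Lemma \ref{2}(1) yields $[J_{11},J_{11},J_{11}]=\{0\}$. Hence $F+J_{11}\in \textnormal{var}(\mathcal{G})$, and using $\mathcal{G}_6\notin \textnormal{var}(A)$ with Theorem \ref{subvarieties}, we obtain $F+J_{11}\sim_T B_1\oplus N'$ with $B_1\in\{F,A_2,\mathcal{G}_4\}$ and $N'$ nilpotent. In particular, $[J_{11},J_{11}][J_{11},J_{11}][J_{11},J_{11}]=\{0\}$.

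Next I would show $J_{10}=J_{01}=\{0\}$, which is the key new input compared with Lemma \ref{lemmag4}. If $J_{10}\neq\{0\}$, then Lemma \ref{1}(1) gives $A_1\in \textnormal{var}(A)$. Since $\mathcal{G}_4\in \textnormal{var}(A)$ by hypothesis, and $\textnormal{Id}(\mathcal{G}_4\oplus A_1)=\textnormal{Id}(\mathcal{G}_4)\cap \textnormal{Id}(A_1)\supseteq \textnormal{Id}(A)$, we obtain $\mathcal{G}_4\oplus A_1\in \textnormal{var}(A)$, contradicting the hypothesis. Analogously, $J_{01}\neq\{0\}$ would yield $\mathcal{G}_4\oplus A_1^*\in \textnormal{var}(A)$, a contradiction. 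Therefore $A = (F+J_{11})\oplus J_{00}$ as a direct sum of algebras.

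Finally I would argue that in the decomposition $F+J_{11}\sim_T B_1\oplus N'$ only $B_1=\mathcal{G}_4$ is possible. Since $J_{00}$ is nilpotent of some index $k$, for $n\geq k$ we have $P_n\subseteq \textnormal{Id}(J_{00})$, so $P_n\cap \textnormal{Id}(A)=P_n\cap \textnormal{Id}(F+J_{11})$. Hence every multilinear identity of $F+J_{11}$ in sufficiently large degree is an identity of $\mathcal{G}_4$. However, if $B_1=F$ then, for $n$ large, $[x_1,x_2]x_3\cdots x_n\in \textnormal{Id}(F\oplus N')\subseteq \textnormal{Id}(F+J_{11})$, and this polynomial is not an identity of $\mathcal{G}_4$; if $B_1=A_2$, then $[x_1,x_2][x_3,x_4]x_5\cdots x_n\in \textnormal{Id}(A_2\oplus N')\subseteq \textnormal{Id}(F+J_{11})$, which is again not in $\textnormal{Id}(\mathcal{G}_4)$ (for example via $x_i\mapsto e_i$, $i\leq 4$, and the remaining variables mapped to $1$). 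Each case contradicts $\mathcal{G}_4\in \textnormal{var}(A)$. Thus $B_1=\mathcal{G}_4$, giving $F+J_{11}\sim_T \mathcal{G}_4\oplus N'$ and therefore $A\sim_T \mathcal{G}_4\oplus N$ with $N=N'\oplus J_{00}$ nilpotent.

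The only subtle point is the final step: ruling out $B_1\in\{F,A_2\}$ requires the observation that nilpotence of $J_{00}$ makes $\textnormal{Id}(A)$ coincide with $\textnormal{Id}(F+J_{11})$ in high multilinear degree, so the asymptotic behavior forced by $\mathcal{G}_4\in \textnormal{var}(A)$ can be transferred to $F+J_{11}$. Everything else is a straightforward adaptation of Lemma \ref{lemmag4}, with the strengthened exclusion list $\mathcal{T}_1$ eliminating the radical components $J_{10}$ and $J_{01}$ outright rather than merely restricting their interactions.
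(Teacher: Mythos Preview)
Your proof is correct and follows essentially the same route as the paper's: both use $N_4\sim_T A_7$ and Lemma~\ref{2}(1) to get $[J_{11},J_{11},J_{11}]=\{0\}$, both use $\mathcal{G}_4\in\textnormal{var}(A)$ together with $\mathcal{G}_4\oplus A_1,\ \mathcal{G}_4\oplus A_1^*\notin\textnormal{var}(A)$ and Lemma~\ref{1}(1)(2) to kill $J_{10}$ and $J_{01}$, and both finish via Theorem~\ref{subvarieties} for $F+J_{11}\in\textnormal{var}(\mathcal{G})$ with $\mathcal{G}_6$ excluded. Your final paragraph, transferring $\mathcal{G}_4\in\textnormal{var}(A)$ to $F+J_{11}$ through the nilpotence of $J_{00}$ in order to rule out $B_1\in\{F,A_2\}$, is in fact more explicit than the paper, which simply asserts $F+J_{11}\sim_T\mathcal{G}_4\oplus N_1$; the observation $[J_{11},J_{11}][J_{11},J_{11}][J_{11},J_{11}]=\{0\}$ you carry over from Lemma~\ref{lemmag4} is harmless but not needed here.
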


\begin{proof}
Since $\mathcal{G}_4 \in \var(A)$ and
$\mathcal{G}_4 \oplus A_1,\ \mathcal{G}_4 \oplus A_1^* \notin \var(A)$, it follows
that $A_1, A_1^* \notin \var(A)$. By Lemma~\ref{1}(1)(2), we conclude that
$J_{10}=J_{01}=\{0\}$, and hence
\[
A=(F+J_{11})\oplus J_{00}.
\]
Moreover, since $N_4 \notin \var(A)$, we also have $A_7 \notin \var(A)$ and,
therefore, by \cite[Lemma~13]{Daniela},
\[
[J_{11},J_{11},J_{11}]=\{0\}.
\]
This implies that $F+J_{11} \in \var(\mathcal{G})$. Furthermore,
$\mathcal{G}_6 \notin \var(A)$, and thus, by
Theorem~\ref{subvarieties}, we obtain
\[
F+J_{11} \sim_T \mathcal{G}_4 \oplus N_1,
\]
where $N_1$ is a nilpotent algebra. Finally, since $J_{00}$ is nilpotent, the
result follows.

\end{proof}

 In order to present the main result of this section, we  introduce the following set:
\[
\mathcal{R} = \{ A_1\oplus A_1^*, A_4,A_4^*, A_5,A_6  \}\cup \mathcal{T}_1.
\]

\begin{theorem} \label{maintheorem2}
		Let $A$ be an algebra over $F$. The following conditions are equivalent:
		\begin{enumerate}
			\item[(1)]$Q\notin \textnormal{var}(A)$, for all $Q\in \mathcal{R} $.
			
			\item[(2)] $A$ is PI-equivalent to one of the following algebras: \[
N,\, C \oplus N,\, A_1 \oplus N,\, A_1^* \oplus N,\, A_2 \oplus N, \, A_2\oplus A_1 \oplus N, \, A_2 \oplus A_1^*\oplus N, \, \mathcal{G}_4\oplus N
\]
            
            where $N$ denotes a nilpotent algebra and $C$ is a commutative non-nilpotent algebra.

   \item[(3)] $l_{n}^z(A)\leq 2,$ for $n$ large enough.
		\end{enumerate}
	\end{theorem}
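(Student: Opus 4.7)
The plan is to establish the cycle of implications $(3) \Rightarrow (1) \Rightarrow (2) \Rightarrow (3)$.

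For $(3) \Rightarrow (1)$, I invoke the central-cocharacter analog of Remark \ref{comparacaochar} (noted just after the lemma computing $l_n^z(A_5) = l_n^z(A_6) = 3$): $B \in \var(A)$ implies $l_n^z(B) \leq l_n^z(A)$. Using Remark \ref{remarkcentral}, Lemma \ref{central1}, the lemma just cited on $A_5, A_6$, and Lemma \ref{centralcolength}, every $Q \in \mathcal{R}$ has $l_n^z(Q) \geq 3$ for $n$ sufficiently large, so $l_n^z(A) \leq 2$ forbids each such $Q$ from lying in $\var(A)$. The implication $(2) \Rightarrow (3)$ follows by inspecting each of the eight listed algebras with the same references.

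For the main direction $(1) \Rightarrow (2)$, I proceed structurally. Since $\mathcal{G}_6 \in \mathcal{R}$ with $\mathcal{G}_6 \in \var(\mathcal{G})$, and $A_4 \in \mathcal{R}$ with $A_4 \in \var(UT_2)$ (Remark \ref{remarksubv}), neither $\mathcal{G}$ nor $UT_2$ lies in $\var(A)$. Theorem \ref{teokemer} then writes $A \sim_T B_1 \oplus \cdots \oplus B_m$, each $B_i$ either nilpotent or of the form $F + J(B_i)$. For a non-nilpotent $B_i$, I decompose $J(B_i) = J_{10} + J_{01} + J_{00} + J_{11}$. The exclusions $A_4, A_4^*, A_5, A_6 \notin \var(B_i)$ combined with Lemma \ref{1}(4)--(7) force $J_{10}J_{00} = J_{00}J_{01} = J_{01}J_{10} = J_{10}J_{01} = 0$, and $A_1 \oplus A_1^* \notin \var(B_i)$ forces at most one of $J_{10}, J_{01}$ to be nonzero. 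By symmetry assume $J_{01} = 0$, so that $B_i = K \oplus J_{00}$ as algebras with $K := F + J_{10} + J_{11}$.

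I then split on whether $\mathcal{G}_4 \in \var(B_i)$. If yes, then $\mathcal{G}_4 \oplus A_1 \in \mathcal{R}$ forces $J_{10} = 0$, and Lemma \ref{lemmag4central} (applicable because $\mathcal{T}_1 \subseteq \mathcal{R}$) gives $B_i \sim_T \mathcal{G}_4 \oplus N$. If no, then $N_4 \notin \var(B_i)$ with Lemma \ref{2}(1) yields $[J_{11}, J_{11}, J_{11}] = 0$, hence $F + J_{11} \in \var(\mathcal{G})$; combined with $\mathcal{G}_4, \mathcal{G}_6 \notin \var(F + J_{11})$ and $\mathcal{G}_2 \sim_T A_2$, Theorem \ref{subvarieties}(1) gives $F + J_{11} \sim_T T \oplus N_1$ with $T \in \{F, A_2\}$. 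When $J_{10} = 0$ this gives $B_i \sim_T C \oplus N$ or $A_2 \oplus N$; when $J_{10} \neq 0$, a $T$-ideal argument analogous to Lemma \ref{lemmaa4}'s proof---exploiting $J_{10}^2 = J_{10}J_{11} = 0$ and $J_{10} \cdot F = 0$---yields $K \sim_T A_1 \oplus (F + J_{11})$, hence $B_i \sim_T A_1 \oplus N$ or $A_1 \oplus A_2 \oplus N$. The mirror case with $J_{01} \neq 0$ produces the $A_1^*$-versions. Aggregating the $B_i$'s, absorbing commutative and nilpotent summands via PI-equivalence (using for example $C \oplus \mathcal{G}_4 \sim_T \mathcal{G}_4$ and $A_2 \oplus \mathcal{G}_4 \sim_T \mathcal{G}_4$), and noting that any two ``incompatible'' types such as $A_1$ and $A_1^*$ or $\mathcal{G}_4$ and $A_1$ would combine to produce an element of $\mathcal{R}$, one obtains precisely the eight algebras listed in $(2)$.

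The main obstacle is the identity $K \sim_T A_1 \oplus (F + J_{11})$ in the subcase $J_{10} \neq 0$. One inclusion is clear from $A_1, F + J_{11} \in \var(K)$. For the other, I would expand a multilinear $f \in \textnormal{Id}(A_1) \cap \textnormal{Id}(F + J_{11})$ on $K$ by writing each $x_i = c_i + j_i + k_i$ with $c_i \in F$, $j_i \in J_{10}$, $k_i \in J_{11}$; the relations $J_{10}^2 = J_{10} \cdot K = 0$ leave only two types of nonzero multilinear contributions---those with no $J_{10}$-input (reducing to the $F + J_{11}$-evaluation of $f$, which vanishes by hypothesis) and those with exactly one $J_{10}$-input at the rightmost position (an expression of the form $P \cdot j$ with $P$ an $F + J_{11}$-evaluation of a polynomial whose vanishing follows from $f \in \textnormal{Id}(A_1)$). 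This closes the argument.
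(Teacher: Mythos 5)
The outline and the implications $(3)\Rightarrow(1)$ and $(2)\Rightarrow(3)$ follow the paper and are fine. But the case $\mathcal{G}_4\notin\var(B_i)$, $J_{10}\neq\{0\}$ in your $(1)\Rightarrow(2)$ argument contains a genuine gap, and in fact your claimed \textnormal{T}-equivalence $K\sim_T A_1\oplus(F+J_{11})$ is false.

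Take $K=A_8=F(e_{11}+e_{22}+e_{33})+Fe_{12}+Fe_{13}+Fe_{14}+Fe_{23}+Fe_{24}+Fe_{34}\subseteq UT_4$. Here $J_{11}=\mathrm{span}\{e_{12},e_{13},e_{23}\}$, $J_{10}=\mathrm{span}\{e_{14},e_{24},e_{34}\}$, $J_{01}=J_{00}=\{0\}$, $J_{10}^2=J_{10}J_{11}=J_{10}\cdot F=\{0\}$, and $F+J_{11}\sim_T A_2$. Your claim would give $A_8\sim_T A_1\oplus A_2$, but $l_n(A_8)=8$ while $l_n(A_1\oplus A_2)=4$. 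Concretely, the polynomial $f=[x_1,x_2][x_3,x_4]$ lies in $\textnormal{Id}(A_1)\cap\textnormal{Id}(A_2)$, yet on $A_8$ it does not vanish: $[e_{12},e_{23}][e_{34},\,1]=e_{13}\cdot(-e_{34})=-e_{14}\neq 0$. Your ``closing argument'' fails precisely here: after putting the single $J_{10}$-input last, the residual degree-$(n-1)$ polynomial $P$ (here $P=[x_1,x_2]x_3$) is \emph{not} forced to vanish on $F+J_{11}$ by the hypothesis $f\in\textnormal{Id}(A_1)$. The condition $f\in\textnormal{Id}(A_1)$ only controls sums of coefficients of monomials grouped by their last variable; it does not annihilate $P$ as a polynomial, and since $J_{11}J_{10}$ may well be nonzero (as in $A_8$, where $e_{13}e_{34}=e_{14}$), the product $P\cdot j$ survives. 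The relation you would actually need, $J_{11}J_{10}=\{0\}$, is not implied by your hypotheses.

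I should add that this difficulty is not purely a defect of your argument: the algebra $A_8$ satisfies every exclusion in $\mathcal{R}$ (one can check directly that $A_1\oplus A_1^*,A_4,A_4^*,A_5,A_6,\mathcal{G}_4\oplus A_1,\mathcal{G}_4\oplus A_1^*,N_4,\mathcal{G}_6\notin\var(A_8)$, for instance because $[x_1,x_2,x_3]x_4\in\textnormal{Id}(A_8)$ is not an identity of $A_1^*,A_4,A_4^*,A_5,A_6,N_4$, and $[x_1,x_2][x_3,x_4]x_5\in\textnormal{Id}(A_8)$ is not an identity of $\mathcal{G}_4$), yet $Z(A_8)=\{0\}$ so $l_n^z(A_8)=l_n(A_8)=8$, and $A_8$ is PI-equivalent to none of the algebras listed in item $(2)$. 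So as stated the theorem's set $\mathcal{R}$ needs to be enlarged (at least by $A_8$ and $A_8^*$, which do appear in the set $\mathcal{I}$ of Theorem \ref{maintheorem}). The paper's own proof of $(1)\Rightarrow(2)$ routes through Lemma \ref{2}(2), which requires $A_8,A_8^*\notin\var(B)$ as hypotheses, and this is asserted there without justification. Your more direct attempt makes the missing step visible rather than hiding it behind a citation, but it is still missing.
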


\begin{proof}
Since $l_n^z(B)\ge 3$ for all $B \in \mathcal{R}$, implication $(3)\Rightarrow(1)$
is immediate. Moreover, by Remark~\ref{remarkcentral} together with
Lemmas~\ref{central1} and~\ref{centralcolength}, we obtain that
$(2)\Rightarrow(3)$. Therefore, it remains to prove that $(1)\Rightarrow(2)$.

Assume that $Q \notin \var(A)$ for every $Q \in \mathcal{R}$. By
Remark~\ref{remarksubv}, this implies that $UT_2, \mathcal{G} \notin \var(A)$.
Hence, by Theorem~\ref{teokemer}, the algebra $A$ has polynomial growth and \[
A \sim_T B_1 \oplus \cdots \oplus B_m,
\] where each $B_i$ is finite dimensional and is either nilpotent or of the form
$F+J(B_i)$. If all the algebras $B_i$ are nilpotent, there is nothing to prove.

Suppose now that $B_i = F+J(B_i)$ for some index $i$. Since
$A_4, A_4^*, A_5, A_6 \notin \var(A)$, Lemma~\ref{1} yields
\[
J_{10}J_{00} = J_{00}J_{01} = J_{10}J_{01} = J_{01}J_{10} = \{0\}.
\]
Consequently, $B_i = B \oplus J_{00}$, where $B = F + J_{10} + J_{01} + J_{11}.$

If $\mathcal{G}_4 \in \var(B)$, then, since $Q \notin \var(B)$ for every
$Q \in \mathcal{T}_1$, Lemma~\ref{lemmag4central} implies that
$B \sim_T \mathcal{G}_4$. Hence,
\[
B_i \sim_T \mathcal{G}_4 \oplus N,
\]
where $N=J_{00}$ is nilpotent.

Thus, we may assume that $\mathcal{G}_4 \notin \var(B)$. In this case, since
$\mathcal{G}_4, A_7, A_8, A_8^* \notin \var(B)$, Lemma~\ref{2} implies that
$B \in \var(UT_2)$. Furthermore, since
$N_4, A_4, A_4^*, A_1 \oplus A_1^* \notin \var(B)$,
Theorem~\ref{subvarieties} establishes that $B_i$ is $\mathrm{T}$-equivalent to one of
the following algebras:
\[
N, \quad
C \oplus N, \quad
A_1 \oplus N, \quad
A_1^* \oplus N, \quad
A_2 \oplus N, \quad
A_2 \oplus A_1 \oplus N, \quad
A_2 \oplus A_1^* \oplus N,
\]
where $N$ is a nilpotent algebra and $C$ is a commutative non-nilpotent algebra.

Finally, since $A \sim_T B_1 \oplus \cdots \oplus B_m$ and certain direct sums
among the algebras listed above are excluded from $\var(A)$, we conclude that
$(1)\Rightarrow(2)$, which completes the proof.

\end{proof}

\vspace{0.5cm}
 \textbf{Acknowledgments}
\vspace{0.5cm}


\end{document}